\newtheorem{thm}{Theorem}
\newtheorem{lem}[thm]{Lemma}
\newtheorem{prop}[thm]{Proposition}
\newtheorem{cor}[thm]{Corollary}
\newtheorem{defn}[thm]{Definition}
\newtheorem{ex}{Example}
\DeclareMathOperator{\Jac}{Jac}
\DeclareMathOperator{\Ima}{Im}
\DeclareMathOperator{\Id}{Id}
\newcommand{\si}{\sigma}
\newcommand{\R}{\mathbb{R}}
\newcommand{\N}{\mathbb{N}}
\newcommand{\D}{\Delta_{\si_{d-1}}}
\newcommand{\la}{\lambda}
\newcommand{\al}{\alpha}
\newcommand{\Z} {\mathbb{Z}}
\renewcommand{\leq}{\leqslant}
\renewcommand{\geq}{\geqslant}
\begin{document}

\baselineskip=15pt

\title[Enumeration of walks in multidimensional orthants]{Enumeration of walks in multidimensional orthants and reflection groups}

\date{\today}
\author{Léa Gohier}

\address{Université de Tours, Institut Denis Poisson, Tours, France. {\em Email}: {\tt lea.gohier@univ-tours.fr}}

\author{Emmanuel Humbert}

\address{Université de Tours, Institut Denis Poisson, Tours, France. {\em Email}: {\tt emmanuel.humbert@univ-tours.fr}}

\author{Kilian Raschel} 

\address{CNRS, International Research Laboratory France-Vietnam in mathematics and its applications, Vietnam Institute for Advanced Study in Mathematics, Hanoï, Vietnam. {\em Email}: {\tt raschel@math.cnrs.fr}}

\keywords{Enumeration of walks in multidimensional orthants; Asymptotic enumeration; Reflection groups; Coxeter groups; Dirichlet eigenvalue; Nodal domain}

\subjclass[2020]{05A15; 05A16; 20F55; 47A75; 60F05}

\begin{abstract} 
We consider (random)\ walks in a multidimensional orthant. Using the idea of universality in probability theory, one can associate a unique polyhedral domain to any given walk model. We use this connection to prove two sets of new results. First, we are interested in a group of transformations naturally associated with any small step model; as it turns out, this group is central to the classification of walk models. We show a strong connection between this group and the reflection group through the walls of the polyhedral domain. As a consequence, we can derive various conditions for the combinatorial group to be infinite. Secondly, we consider the asymptotics of the number of excursions, whose critical exponent is known to be computable in terms of the eigenvalue of the above polyhedral domain. We prove new results from spectral theory on the eigenvalues of polyhedral nodal domains. We believe that these results are interesting in their own right; they can also be used to find new exact asymptotic results for walk models corresponding to these nodal polyhedral domains.
\end{abstract}

\maketitle

\section{Introduction and main results}
\label{sec:introduction}

\subsection*{Lattice walks in multidimensional orthants}
A lattice walk is a sequence of points $P_0, P_1,\ldots ,P_n$ of $\mathbb Z^d$, $d\geq 1$. The points $P_0$ and $P_n$ are its starting and end points, respectively, the consecutive differences $P_{i+1}-P_i$ its steps, and $n$ is its length. Given a set $\mathcal S\subset \mathbb Z^d$, called the step set, a set $C\subset \mathbb Z^d$ called the domain (which in this paper will systematically be the cone $\mathbb R_+^d$, called the $d$-dimensional orthant), and elements $P$ and $Q$ of $C$, we are interested in the number 
\begin{equation*}
   e_C(P,Q; n)    
\end{equation*}
of (possibly weighted)\ walks (or excursions)\ of length $n$ that start at $P=P_0$, have all their steps in $\mathcal S$, have all their points in $C$, and end at $Q=P_n$. See Figure~\ref{fig:ex}. Normalising the weights with the condition that they sum to one, we obtain transition probabilities, and the number $e_C(P,Q; n)$ can be interpreted as the probability that a random walk starting at $P$ will reach the point $Q$ at time $n$ while remaining in the domain $C$.

In the last twenty years, there has been a dense research activity in the mathematical community on the enumerative aspects of walks confined to cones, in particular to the $d$-dimensional orthant. To summarise, three main questions have attracted most attention: the first is to determine, if possible, a closed-form formula for the number of walks $e_C(P,Q; n)$. Of course, such an explicit formula is not expected to exist in general, and in most cases can be explained by bijections with other combinatorial objects. The second question concerns the asymptotic behaviour, e.g.\ of the number of excursions $e_C(P,Q; n)$, in the regime where the length $n\to\infty$, while the start and end points remain fixed: as we will see later, in general one has
\begin{equation}
\label{eq:one-term-asymp}
   e_C(P,Q;n) \sim c(P,Q) \frac{\rho^n}{n^{\alpha}},
\end{equation}
for some quantities $c,\rho,\alpha$ depending on the model; $\rho$ is called the structural constant, it describes the exponential growth of the number of excursions, while $\alpha$ is called the critical exponent. The last question would focus more on the complexity of generating functions associated with these models, such as the excursion series
\begin{equation}
\label{eq:excursions_series}
   \sum_{n\geq0} e_C(P,Q;n)t^n.
\end{equation}
One would then ask whether the last power series satisfies any algebraic or differential equation. The answer to the last problem may allow us to classify the models according to the complexity of their generating function, and is further related to the first two questions.

\begin{figure}[t!]
\begin{center}
\includegraphics[height=3.7cm]{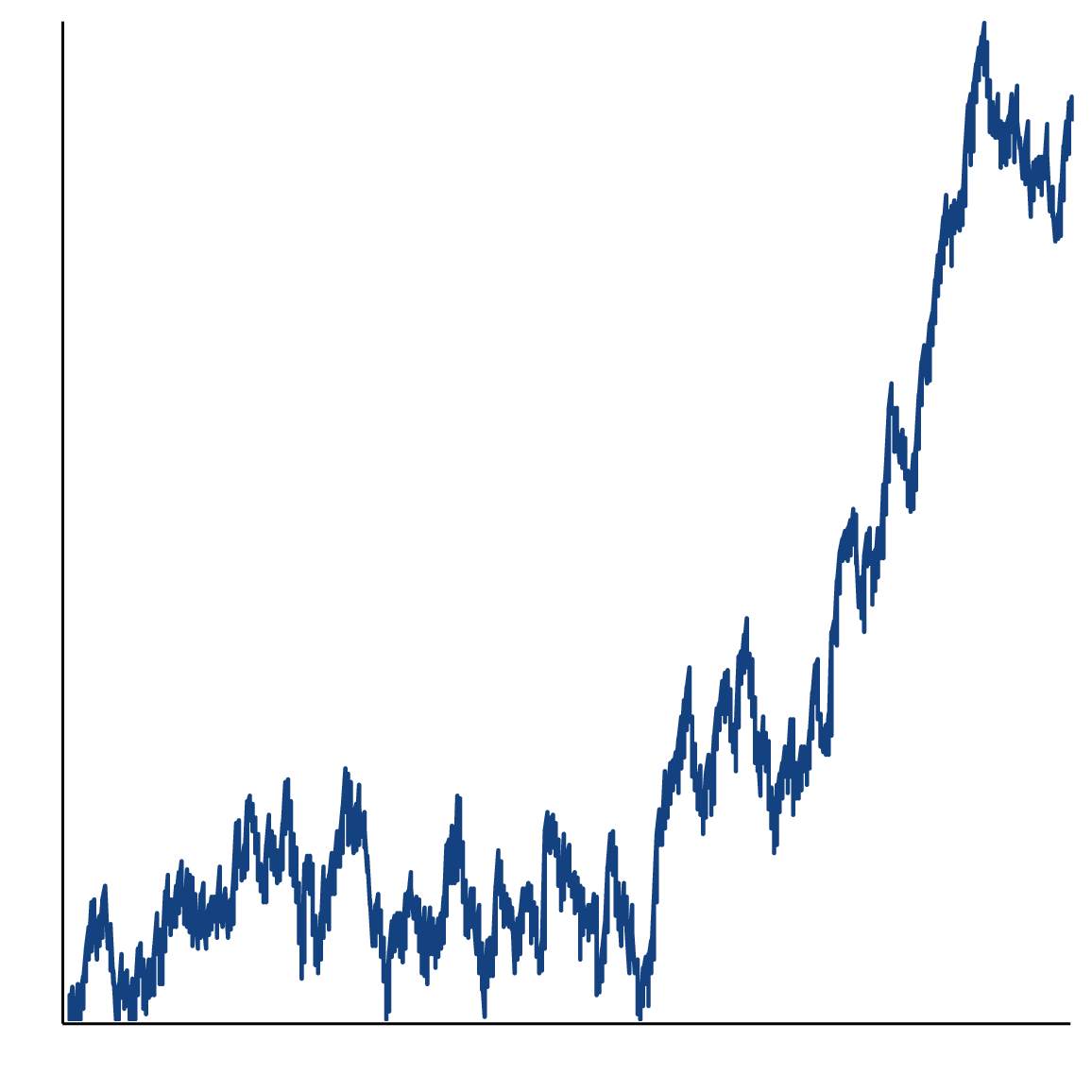}\qquad\qquad
\includegraphics[height=3.7cm]{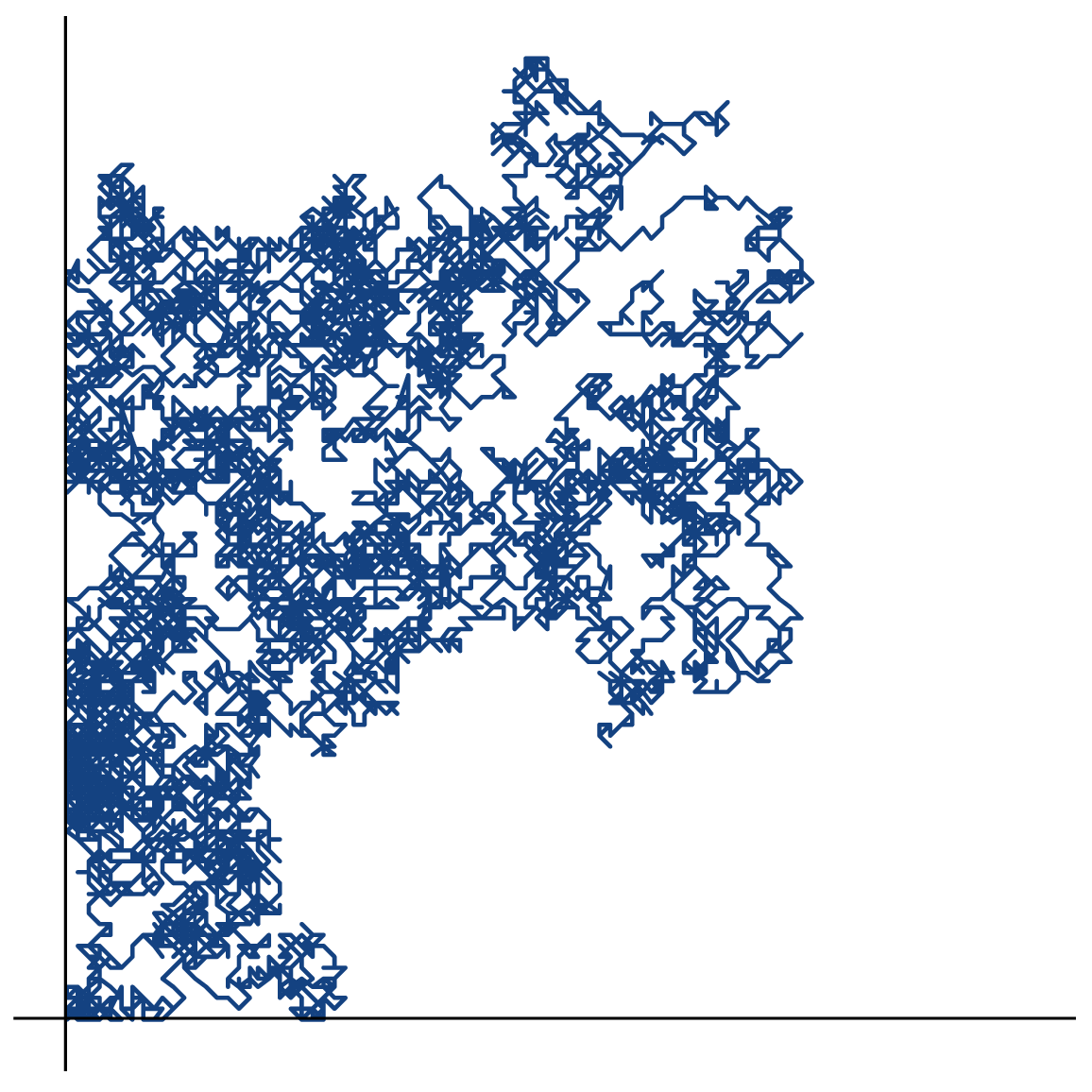}\qquad\qquad
\includegraphics[height=3.7cm]{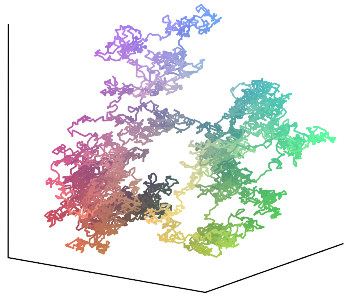}
\end{center}
\caption{Walks in orthants $\mathbb R_+^d$ in small dimensions $d=1,2,3$}
\label{fig:ex}
\end{figure}

Historically, these questions were first addressed for one-dimensional models. In this case, the model is that of walks on the positive half-line, which has a long tradition in the probabilistic community \cite{BeDo-94} and also in the combinatorial community \cite{BaFl-02}. Note that in dimension one, walks with bounded jumps admit algebraic generating functions \cite{BaFl-02}; we can thus systematically solve the three questions mentioned above. The case of dimension two gives rise to the model of walks in the quarter plane, which has been studied intensively, see \cite{BMMi-10,BoRaSa-14,DrHaRoSi-18} and references therein. The variety of possible behaviours and answers to the three questions is much richer than in dimension one; however, a combination of techniques from different fields (probability \cite{BoRaSa-14,DeWa-15}, complex analysis \cite{FaIaMa-17}, analytic combinatorics in several variables \cite{BMMi-10,MeMi-16,MeWi-19}, Galois theory of difference equations \cite{DrHaRoSi-18}, etc.)\ eventually lead to a deep understanding of this class of models, at least in the case of small steps (this hypothesis means that the walk can only go to neighbours at $\ell^\infty$-distance one). Walk models in higher dimension $d\geq 3$ have been less studied. The variety of behaviours seems to increase dramatically with dimension, and exactly solvable models become an exception. See the recent papers \cite{BoBMKaMe-16,DuHoWa-16,KaWa-17,BoPeRaTr-20,HiJeRa-24} for properties of three-dimensional models and \cite{BuHoKa-21} for an approach in dimension four. In higher dimensions, some models are equivalent to non-intersecting paths \cite{DoOC-05,LeLePe-12,Fe-14,Fe-18}, which are well understood and studied in both the combinatorial and physics literature.

\subsection*{Focus on two important tools}
In order to present the main results of this paper, we recall two different tools that are of crucial interest in obtaining some of the previous results in the literature. The first is a group introduced in dimension two in the combinatorial context in \cite{BMMi-10}, following the idea of Fayolle, Iasnogorodski and Malyshev in \cite{FaIaMa-17}, see also \cite{Ma-71}. This group will be properly defined later in the paper (see Section~\ref{sec:preliminaries}, in particular \eqref{eq:group_G_def}), but can be presented informally as follows. It is a symmetry group of involutions defined by the steps of the model $\mathcal S$. The main application is that, if it is finite, its action on a functional equation naturally associated with the model can lead to explicit expressions for the generating functions \eqref{eq:excursions_series} (with some further information on the asymptotics \eqref{eq:one-term-asymp} and algebraic complexity). In principle, this method works in dimension two \cite{FaIaMa-17,BMMi-10} and higher \cite{BoBMKaMe-16,Ya-17,BuHoKa-21}. However, there is no criterion to decide whether the group is finite (even in dimension two), nor any interpretation of the group using more classical groups. 

The second tool comes from probability theory and is the asymptotics written in \eqref{eq:one-term-asymp}, originally derived in \cite[Eq.~(12)]{DeWa-15},
where the prefactor $c(P,Q)$ depends on the start and end points (and can be interpreted as a discrete harmonic function), the structural constant $\rho$ is easily computed in terms of the parameters, see \eqref{eq:formula_rho}, and the critical exponent is computed by
Denisov and Wachtel in \cite{DeWa-15}:
\begin{equation}
\label{eq:DW_exponent}
    \alpha = 1+\sqrt{\lambda_1+\left(\tfrac{d}{2}-1\right)^2},
\end{equation}
where $\lambda_1$ is the principal eigenvalue for a Dirichlet problem on a subdomain of the sphere $\mathbb S^{d-1}$, see our Theorem~\ref{thm:DW_formula_exponent} for a more precise statement. See \cite{DB-86,BaSm-97} for probabilistic references where the exponent \eqref{eq:DW_exponent} appears in the context of Brownian motion in cones; see also \cite{Va-99}. While the formula \eqref{eq:DW_exponent} characterises the critical exponent in \eqref{eq:one-term-asymp}, it is not clear a priori for which walk models $\lambda_1$ can be computed in closed form.

\subsection*{A polyhedral domain depending on the walk model}
In this paper we introduce a new idea, based on reflection groups and more generally Coxeter groups, to study at once the combinatorial group and the critical exponent mentioned above. The main tool is to transform the orthant $\mathbb R_+^d$, which is the natural confinement domain of the walk (see again Figure~\ref{fig:ex}), into another polyhedral domain obtained as a linear transformation of the orthant, see Figure~\ref{fig:action_Delta}. More precisely, the new domain is 
\begin{equation}
    \label{eq:def_new_domain}
    \Delta^{-\frac{1}{2}}\mathbb R_+^d,
\end{equation}
where the linear map $\Delta$ allowing this transformation is canonical and given by the idea of universality in probability theory, as we now explain. 

\begin{figure}[t!]
    \centering
    \includegraphics[width=3cm]{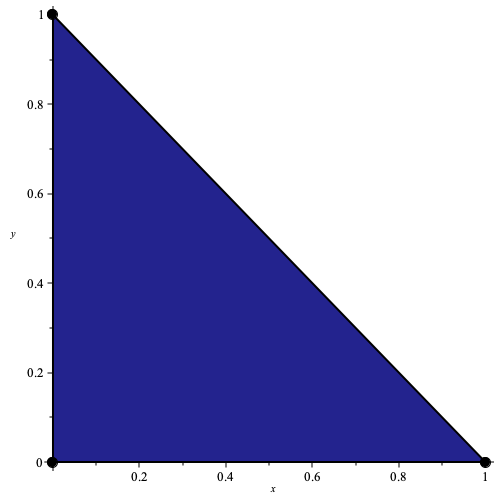}\quad
    \includegraphics[width=3cm]{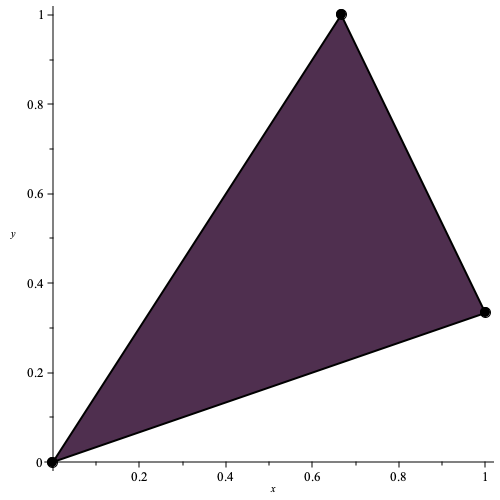}\quad
    \includegraphics[width=4cm]{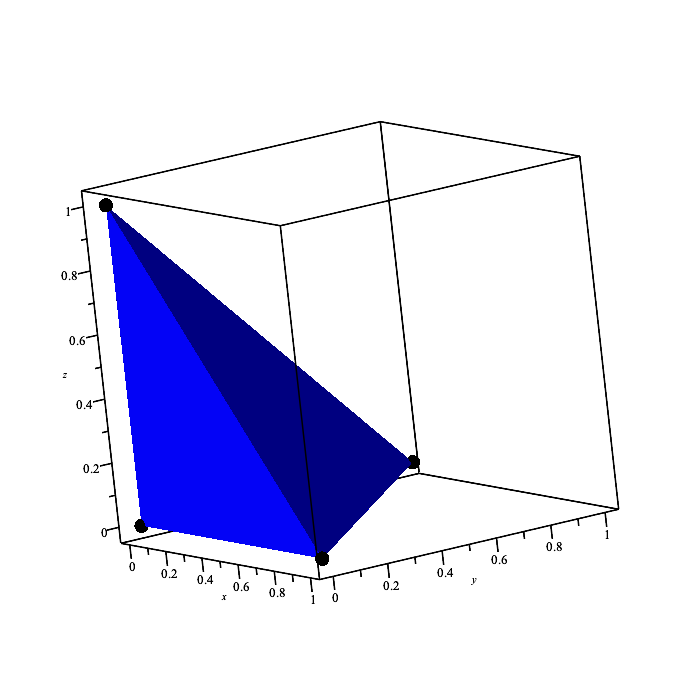}\quad
    \includegraphics[width=4cm]{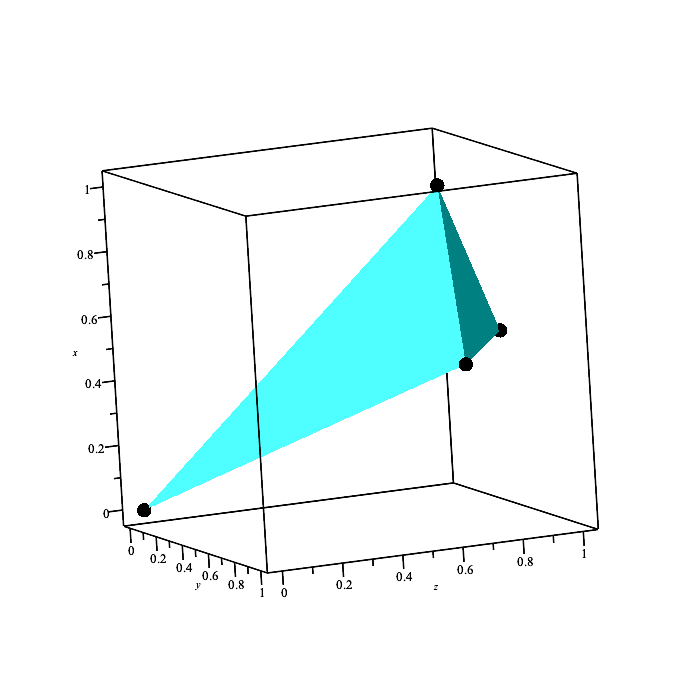}
    \caption{The orthant $\mathbb R_+^d$ is mapped into the polyhedral domain (or pyramid)\ \eqref{eq:def_new_domain}, which depends on the model (illustration in dimension two and three). In the new domain, the model has a covariance matrix equal to the identity.}
    \label{fig:action_Delta}
\end{figure}

For a given model $\mathcal S$, if $w(s)$ denotes the weight of the step $s=(s_1,\ldots,s_d)\in\mathcal S$, the drift is defined as the vector $\sum_{s\in\mathcal S}w(s) s$ and the covariance matrix is the $d\times d$ matrix with coefficients $\sum_{s\in\mathcal S}w(s) s_is_j$. 
A generic walk model will have a non-zero drift and a covariance different from the identity. It is then natural to perform an exponential transformation of the weights to remove the drift and transform the covariance matrix into the identity matrix. This change of measure is classical in probability theory and is known as the Cram\'er transformation (it will be recalled in Section~\ref{sec:preliminaries}, see \eqref{eq:formula_new_weights}). In this way, the new random walk is in the region of attraction of a standard Brownian motion, and various universal limit theorems can be applied, in particular the results of Denisov and Wachtel \cite{DeWa-15}. 

If we denote by $\Delta$ the covariance matrix of the new random walk $(Y_n)_{n\geq 0}$ after changing the transition probabilities, the modified random walk $(\Delta^{-\frac{1}{2}}Y_n)_{n\geq 0}$ has identity covariance matrix. The counterpart is that the confinement domain of the process is no longer the orthant $\mathbb R_+^d$ but the polyhedral domain \eqref{eq:def_new_domain}, which appears naturally in this way.
See Section~\ref{sec:preliminaries}, in particular equation~\eqref{eq:formula_new_weights}, for mode details of this construction.

\subsection*{Main results}

Given the polyhedral domain \eqref{eq:def_new_domain} it is natural to ask two main questions. First, one can define the reflection group $H$ spanned by the reflections through the sides of the domain, and ask how this group compares to the natural combinatorial group $G$ mentioned above. This question is at the origin of our first set of results (Part~\ref{part:1} of the paper). 

In Theorem~\ref{thm:prop_scalar_product} and Corollary~\ref{cor:imJH} we construct a surjective morphism $G \to H$ which sends the set of generators of $G$ to the set of generators of $H$. 
This clarifies the connection between the group $G$ and $H$, and also between $G$ and Coxeter groups. A direct consequence (Corollary~\ref{GinHfin})\ is that if $H$ is infinite, then $G$ should also be infinite. More generally, our results extend and simplify the strategy developed in \cite{BMMi-10,DuHoWa-16,KaWa-17} to prove that some models $\mathcal S$ admit an infinite group $G$ in dimension two and three (which is based on showing that the order of the composition of two generators of $G$ is infinite). In our case, since the composition of two reflections is simply a rotation of the angle twice the angle between two boundary hyperplanes of the polyhedral domain, its order can be read directly from the covariance matrix.
Another relevant remark is that $H$ is a reflection group and thus a Coxeter group. If it is finite, it must belong to a short list of known examples (see e.g.\ \cite{Hu-90}), and thus it suffices to exclude these cases to prove that $H$ (and thus $G$)\ is infinite. See Section~\ref{sec:app} for more details and various examples. 

The second set of results (Part~\ref{part:2} of this paper)\ concerns the critical exponent $\alpha$ in \eqref{eq:one-term-asymp}, or equivalently the principal eigenvalue $\lambda_1$.
There are basically two ways to compute $\alpha$. In some cases there is an explicit expression for the number $e_C(P,Q;n)$ (or for the associated generating function \eqref{eq:excursions_series}), which can be analysed asymptotically. See for example \cite{BMMi-10,BoBMKaMe-16} for such calculations. However, this is very rare and is related to bijections with other combinatorial models or obtained by delicate combinatorial manipulations.  Forgetting any combinatorial interpretation of the numbers $e_C(P,Q;n)$, the second approach would be to study the quantity $\lambda_1$ directly from a geometric perspective. 
In dimension two one can fully compute the eigenvalue $\lambda_1$, which is given by $\bigl(\frac{\pi}{\arccos (-a)}\bigr)^2$, where $a$ is a correlation coefficient, which can be computed starting from the parameters. See Example~2 in \cite{DeWa-15} and \cite{BoRaSa-14} for detailed computations in dimension two. In dimension $d\geq 3$ there is no hope of finding such a general formula for $\lambda_1$. This is illustrated in \cite{BoPeRaTr-20} in dimension three, where $\lambda_1$ is characterised as the first eigenvalue of spherical triangles, which is generally not computable in closed form. The only exceptions are triangles belonging to a small list of examples given in \cite{BeBe-80} and \cite[Sec.~3]{Be-83}, which correspond to certain crystallographic groups. In these cases it is possible to calculate the principal eigenvalue and in fact the whole spectrum. 

Surprisingly, Bérard and Besson's works \cite{BeBe-80,Be-83} have not been extended to the case of arbitrary dimension (although a list of relevant domains is proposed in \cite{ChSo-09} in dimension four). This is the main result of our second part, see Theorem~\ref{pndomains}. More precisely:
\begin{itemize}
    \item We compute the Dirichlet eigenvalue $\la_1$ of polyhedral domains (in the sense of  \eqref{eq:def_new_domain}) which are also nodal domains   of ${\mathbb{S}}^{d-1}$, i.e.\ for which there exists $\phi \in \mathcal C^{\infty}({\mathbb{S}}^{d-1})$ such that $\phi$ is an eigenfunction of the Laplacian satisfying $\phi>0$ on the domain and 
$\phi=0$ on the boundary. This part of the statement in Theorem~\ref{pndomains} is exactly the $d$-dimensional generalisation of \cite{BeBe-80,Be-83}. The main technical novelty is that we find an expression for the eigenfunction which makes possible the computation of $\la_1$ in all dimensions.
    \item We classify  all such domains. We show that they must be the intersection of a chamber of a finite Coxeter group with $\mathbb S^{d-1}$.
\end{itemize}
This provides new examples of walk models, in any dimension, for which one can compute the asymptotics \eqref{eq:one-term-asymp}.

\part{The combinatorial group as a reflection group}
\label{part:1}

The first part consists of three sections. First, in Section~\ref{sec:preliminaries}, we introduce our notation and recall some details about the connection between the asymptotics of the excursion and the eigenvalue $\lambda_1$. Section~\ref{sec:surjective} contains our main theoretical results, in particular Theorem~\ref{thm:prop_scalar_product} and Corollary~\ref{cor:imJH} on the surjective morphism $G \to H$. Finally, in Section~\ref{sec:app} we propose various applications of our results, giving several examples and techniques to prove that $G$ is infinite or to show that $G$ and $H$ are isomorphic. 

\section{Preliminary notations and results}
\label{sec:preliminaries}

\subsection{Assumptions on the step set}
Define the inventory of the model $\mathcal S$ as follows:
\begin{equation}
\label{eq:inventory}
    \chi_\mathcal S(x_1,\ldots,x_d) = \sum_{(i_1,\ldots,i_d)\in\mathcal S} w(i_1,\ldots,i_d)x_1^{i_1}\cdots x_d^{i_d},
\end{equation}
where $w(i_1,\ldots,i_d)>0$ is the weight of the step $(i_1,\ldots,i_d)\in\mathcal S$. We will normalize the weights in such a way that $\chi_\mathcal S(1,\ldots,1)=1$, so that they are also  transition probabilities. Most of the time we shall assume that the step set satisfies the following irreducibility assumption:
\begin{enumerate}[label={\rm (H\arabic*)},ref={\rm (H\arabic*)}]
     \item\label{it:hypothesis_irreducible}
     For any two points $P,Q$ in the domain $C$, the set $\{n\in\mathbb N : e_C(P,Q;n)\neq 0\}$ is non-empty, with $\mathbb N=\{1,2,\ldots\}$.
\end{enumerate}
As a consequence of \ref{it:hypothesis_irreducible}, the walk can visit any point in the domain $C$, independent of its starting point.

\subsection{The group of the model}
This group was first introduced in the context of two-dimensional walks \cite{Ma-71,FaIaMa-17,BMMi-10} and turns out to be very useful. Let $\chi_\mathcal S$ be the inventory \eqref{eq:inventory}. Introduce the notation
\begin{align*}
     \chi_\mathcal S(x_1,\ldots,x_d) &=x_1A_{1}(x_2,\ldots,x_d)+B_1(x_2,\ldots,x_d)+\overline{x_1}C_{1}(x_2,\ldots,x_d)\\
                     &=x_2A_{2}(x_1,x_3,\ldots,x_d)+B_2(x_1,x_3,\ldots,x_d)+\overline{x_2}C_{2}(x_1,x_3,\ldots,x_d)\\
                     &=\cdots\\
                     &=x_dA_{d}(x_1,\ldots,x_{d-1})+B_d(x_1,\ldots,x_{d-1})+\overline{x_d}C_{d}(x_1,\ldots,x_{d-1}),
\end{align*}
where $\overline{x_i}=\frac{1}{x_i}$. Under the assumption~\ref{it:hypothesis_irreducible}, the step set has a positive step in each direction and $A_{1},\ldots,A_d$ are all non-zero. By definition, the group of $\mathcal S$ is the group 
\begin{equation}
    \label{eq:group_G_def}
    G=\langle\varphi_1,\ldots,\varphi_d\rangle
\end{equation}
of birational transformations of the variables $[x_1,\ldots,x_d]$ generated by the following involutions:
\begin{equation}
\label{eq:expression_generators}
     \left\{\begin{array}{rl}
     \varphi_1([x_1,\ldots,x_d]) & =\, \left[\overline{x_1}\frac{C_{1}(x_2,\ldots,x_d)}{A_{1}(x_2,\ldots,x_d)},x_2,\ldots,x_d\right],\smallskip\\
     \varphi_2([x_1,\ldots,x_d]) & =\, \left[x_1,\overline{x_2}\frac{C_{2}(x_1,x_3,\ldots,x_d)}{A_{2}(x_1,x_3,\ldots,x_d)},x_3,\ldots,x_d\right],\smallskip\\
     \cdots & \\
     \varphi_d([x_1,\ldots,x_d]) & =\, \left[x_1,\ldots,x_{d-1},\overline{x_d}\frac{C_{d}(x_1,\ldots,x_{d-1})}{A_{d}(x_1,\ldots,x_{d-1})}\right].
     \end{array}\right.
\end{equation}
The generators of the group $G$ therefore satisfy the relations $\varphi_1^2=\cdots=\varphi_d^2=\Id$, plus possible other relations, depending on the model.

\subsection{Probabilistic estimates}

We need to introduce the following quantity. Given a cone $T$ in $\mathbb R^d$ (which in our case will be a polytope, or pyramid, defined by an intersection of $d$ linear half-spaces, see for example \eqref{eq:def_T}), we
define $\lambda_1$ as the smallest eigenvalue $\Lambda$ of the Dirichlet problem for the Laplace-Beltrami operator $\Delta_{\mathbb S^{d-1}}$ on the sphere $\mathbb S^{d-1}\subset\mathbb R^d$
\begin{equation}
\label{eq:Dirichlet_problem}
     \left\{
\begin{array}{rll}
     -\Delta_{\mathbb S^{d-1}}m&=\ \Lambda m & \text{in } T \cap \mathbb S^{d-1},\\
     m&=\ 0& \text{in } \partial \bigl(T \cap \mathbb S^{d-1}\bigr).
     \end{array}
     \right.
\end{equation}
See the book \cite{Ch-84} for general properties of the eigenvalues of the above Dirichlet problem.

\begin{thm}[\cite{DeWa-15,DeWa-19}]
\label{thm:DW_formula_exponent}
Let $\mathcal S$ be a step set satisfying~\ref{it:hypothesis_irreducible}, and let $\chi_\mathcal S$ be its inventory~\eqref{eq:inventory}. The system of equations
\begin{equation}
\label{eq:chi_critical_point}
     \frac{\partial \chi_\mathcal S}{\partial x_1}=\cdots = \frac{\partial \chi_\mathcal S}{\partial x_d}=0
\end{equation}
admits a unique solution in $(0,\infty)^d$, denoted by $\boldsymbol{x_0}$. Define
the covariance matrix 
\begin{equation}
\label{eq:expression_covariance_matrix}
     \Delta=\bigl(a_{i,j}\bigr)_{1\leq i,j\leq d},\quad \text{with } a_{i,j}=\frac{\frac{\partial^2 \chi_\mathcal S}{\partial x_i\partial x_j}(\boldsymbol{x_0})}{\sqrt{\frac{\partial^2 \chi_\mathcal S}{\partial x_i^2}(\boldsymbol{x_0})\cdot \frac{\partial^2 \chi_\mathcal S}{\partial x_j^2}(\boldsymbol{x_0})}}.
\end{equation}
Let $\Delta^{-\frac{1}{2}}$ denote the inverse of the symmetric, positive definite square root of the covariance matrix $\Delta$, see \eqref{eq:def_power_Delta}.
Consider the $d$-dimensional polytope 
\begin{equation}
    \label{eq:def_T}
    T=\Delta^{-\frac{1}{2}}\mathbb R_+^d.
\end{equation}
Let $\lambda_1$ be the smallest eigenvalue of the Dirichlet problem \eqref{eq:Dirichlet_problem}. Then for all $P,Q\in\mathbb N_0^d$ and $n$ such that $e_C(P,Q;n)\neq 0$, the asymptotics \eqref{eq:one-term-asymp} of the number of excursions going from $P$ to $Q$ holds, where 
\begin{equation}
\label{eq:formula_rho} 
     \rho =\min_{(0,\infty)^d} \chi_\mathcal S
\end{equation}
and the critical exponent $\alpha$ in \eqref{eq:one-term-asymp} is given by \eqref{eq:DW_exponent}, namely $\alpha = 1+\sqrt{\lambda_1+\left(\tfrac{d}{2}-1\right)^2}$.
\end{thm}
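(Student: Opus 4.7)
The plan is to reduce the statement to the main local limit theorem of Denisov--Wachtel \cite{DeWa-15,DeWa-19} via the classical Cramér transform, followed by a linear rescaling. The Denisov--Wachtel result gives exactly the desired excursion asymptotics for a centred walk with identity covariance in a convex cone, so the task is to produce such a walk from the given step set and to track how the various quantities transform.

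\emph{Existence and uniqueness of $\boldsymbol{x_0}$.} Substituting $x_i=e^{y_i}$ turns $\chi_\mathcal S$ into a positive linear combination of exponentials $e^{\langle s, y\rangle}$, which is strictly convex in $y\in\mathbb R^d$ because \ref{it:hypothesis_irreducible} forces the step vectors to affinely span $\mathbb R^d$. Irreducibility also guarantees that this function tends to $+\infty$ as $\|y\|\to\infty$, so a unique minimum exists, characterised by \eqref{eq:chi_critical_point}; its value is $\rho=\min_{(0,\infty)^d}\chi_\mathcal S$.

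\emph{Cramér tilt and linear rescaling.} Set $\tilde w(s)=\rho^{-1}w(s)\,\boldsymbol{x_0}^{\,s}$, where $\boldsymbol{x_0}^{\,s}=\prod_i x_{0,i}^{s_i}$. These are positive and sum to $1$, hence transition probabilities of a new walk on $\mathbb Z^d$, and \eqref{eq:chi_critical_point} is precisely the zero-drift condition for the tilted walk. Excursion counts obey the exact identity $e_C(P,Q;n)=\rho^n\,\boldsymbol{x_0}^{\,P-Q}\,\tilde e_C(P,Q;n)$, with $\tilde e_C$ the probabilistic excursion count. A direct computation shows that, after normalising each coordinate by the inverse of its own standard deviation, the covariance matrix of the tilted walk is exactly $\Delta$ as in \eqref{eq:expression_covariance_matrix}. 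Applying the linear map $\Delta^{-1/2}$ then produces a centred walk with identity covariance, confined to the cone $T=\Delta^{-1/2}\mathbb R_+^d$. Invoking \cite{DeWa-15,DeWa-19} for this walk gives $\tilde e_C(P,Q;n)\sim \tilde c(P,Q)\,n^{-d/2-p}$, where $p>0$ is the unique positive root of $p(p+d-2)=\lambda_1$, i.e.\ $p=-(d-2)/2+\sqrt{\lambda_1+(d/2-1)^2}$. Multiplying by the factor $\rho^n$ from the tilt yields \eqref{eq:one-term-asymp} with $\alpha=d/2+p=1+\sqrt{\lambda_1+(d/2-1)^2}$.

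The main obstacle will be the careful verification that the hypotheses of \cite{DeWa-15,DeWa-19} apply after these two changes of variable: aperiodicity of the tilted walk, regularity of $T$ (which is simplicial and therefore Lipschitz), and existence of a strictly positive discrete harmonic function that guarantees the prefactor $c(P,Q)$ is non-zero. Because the image of $\mathbb Z^d$ under $\Delta^{-1/2}$ is in general not a sublattice of $\mathbb R^d$, the non-lattice refinement \cite{DeWa-19} is needed rather than the lattice version \cite{DeWa-15}. The remaining steps, in particular matching the Hessian-based formula \eqref{eq:expression_covariance_matrix} with the standardised covariance of the tilted walk, are routine calculations.
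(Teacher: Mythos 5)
Your reduction---Cram\'er tilt at $\boldsymbol{x_0}$, coordinate-wise standardisation, then the linear map $\Delta^{-\frac{1}{2}}$, followed by the Denisov--Wachtel excursion asymptotics on the cone $T$---is exactly the chain of transformations the paper relies on (it states this theorem as a citation of \cite{DeWa-15,DeWa-19} and describes the same four walks $W_n,X_n,Y_n,Z_n$ in the ``Four random walks'' subsection of Section~\ref{sec:preliminaries}), and your bookkeeping of $\rho$, of the correlation matrix \eqref{eq:expression_covariance_matrix} and of $\alpha=1+\sqrt{\lambda_1+(\tfrac{d}{2}-1)^2}$ is correct. One small correction: the point of \cite{DeWa-19} is to relax the strong aperiodicity hypothesis \ref{it:hypothesis_aperiodic} to the irreducibility hypothesis \ref{it:hypothesis_irreducible}, not to handle non-lattice walks---the image of $\mathbb Z^d$ under $\Delta^{-\frac{1}{2}}$ is still a full-rank lattice, so the lattice setting applies throughout.
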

The condition $e_C(P,Q;n)\neq 0$ can be easily understood by thinking about periodic random walks, for example the simple walk 
\begin{equation*}
    \chi_\mathcal S(x_1,\ldots,x_d)=x_1+\overline{x_1}+\cdots +x_d+\overline{x_d}, 
\end{equation*}
see our Examples~\ref{ex:ex_Coxeter_A} and \ref{ex:ex_Coxeter_B}, which can only return to its starting point in an even number of steps. Theorem~\ref{thm:DW_formula_exponent} was actually first proved under the following stronger aperiodicity property:
\begin{enumerate}[label={\rm (H\arabic*)},ref={\rm (H\arabic*)}]
\setcounter{enumi}{1}
\item\label{it:hypothesis_aperiodic} For any two points $P,Q$ in the domain $C$, the gcd of the set $\{n\in\mathbb N : e_C(P,Q;n)\neq 0\}$ is $1$.
\end{enumerate}
However, it was later proved \cite{DeWa-19} that the same asymptotic results hold only under \ref{it:hypothesis_irreducible}.

\subsection{Four random walks}
To be complete and to interpret the above Theorem~\ref{thm:DW_formula_exponent}, let us define four random walks that are naturally associated with any walk model $\mathcal S$. A similar discussion is proposed in dimension $2$ in \cite[Sec.~2.3]{BoRaSa-14}. First, the main random walk associated with the step set $\mathcal S$ and weight $w(s)$ is denoted by $(W_n)_{n\geq 0}$. By definition, for all $s\in\mathcal S$ and $n\geq 0$, $\mathbb P(W_{n+1}-W_n=s)=w(s)$. With $\chi_\mathcal S$ as in \eqref{eq:inventory}, it has a (possibly non-zero)\ drift given by $\nabla \chi_\mathcal S(\boldsymbol{1})=\sum_{s\in\mathcal S} w(s)s$ and a covariance matrix given by 
\begin{equation*}
    \left(\frac{\partial^2 \chi_\mathcal S}{\partial x_i\partial x_j}(\boldsymbol{1})\right)_{1\leq i,j\leq d}.
\end{equation*}

The second random walk model appears when the drift of the model is removed; it is the Cram\'er transformation, as mentioned in the introduction. It is denoted by $(X_n)_{n\geq0}$, has jumps in $\mathcal S$ and transitions 
\begin{equation}
    \label{eq:formula_new_weights}
    \mathbb P(X_{n+1}-X_n=s)=\frac{w(s)\boldsymbol{x_0}^{\boldsymbol{s}}}{\chi_\mathcal S(\boldsymbol{x_0})},
\end{equation}
with the multi-index notation $\boldsymbol{s}=(s_1,\ldots,s_d)\in\mathbb Z^d$ and $\boldsymbol{x_0}$ solving the system~\eqref{eq:chi_critical_point}. It has zero drift and covariance matrix given by
\begin{equation*}
\left(\frac{\frac{\partial^2 \chi_\mathcal S}{\partial x_i\partial x_j}(\boldsymbol{x_0})}{\chi_\mathcal S(\boldsymbol{x_0})}\right)_{1\leq i,j\leq d}.
\end{equation*}
If the initial random walk $(W_n)_{n\geq0}$ has zero drift, then $\boldsymbol{x_0}=\boldsymbol{1}$ and the first step becomes unnecessary: $X_n=W_n$.

The third model, denoted $(Y_n)_{n\geq0}$, is a normalised version of $(X_n)_{n\geq0}=(X_n^1,\ldots,X_n^d)_{n\geq0}$ so as to have a covariance matrix with unit diagonal coefficients; it is defined by
\begin{equation*}
    Y_n=\left(\frac{X_n^1}{\sqrt{\mathbb E\bigl((X_n^1)^2\bigr)}},\ldots,\frac{X_n^d}{\sqrt{\mathbb E\bigl((X_n^d)^2\bigr)}}\right).
\end{equation*}
It has zero drift and, by construction, a covariance matrix given by $\Delta$ in \eqref{eq:expression_covariance_matrix}.

Finally, the random walk $(Z_n)_{n\geq0} = (\Delta^{-\frac{1}{2}}Y_n)_{n\geq 0}$ has zero drift and an identity covariance matrix. Unlike the random walks $(W_n)_{n\geq 0}$, $(X_n)_{n\geq 0}$ and $(Y_n)_{n\geq 0}$, which all evolve in the orthant $\mathbb R_+^d$, the domain of definition of $(Z_n)_{n\geq0}$ is  $\Delta^{-\frac{1}{2}}\mathbb R_+^d$ as in \eqref{eq:def_new_domain}.

\section{A surjective morphism from the combinatorial group to a reflection group}
\label{sec:surjective}

\subsection{Preliminary results}

Our first result (Proposition~\ref{prop:reformulation_cov_matrix})\ is a reformulation of the covariance matrix given by \eqref{eq:expression_covariance_matrix} in terms of the cosine of certain angles. Our result is general and holds with general matrices; however, in our application we will systematically take the covariance matrix $\Delta$ as in \eqref{eq:expression_covariance_matrix}.

In the whole paper, we denote the canonical basis of $\R^d$ by $(e_i)_{1\leq i\leq d}$ and $\langle\cdot,\cdot\rangle$ will stand to the scalar product on $\R^d$.
We consider a matrix $\Delta = \bigl(a_{i,j}\bigr)_{1\leq i,j\leq d}$ which is symmetric, positive definite and has diagonal coefficients equal to $1$. The matrix $\Delta$ is diagonalisable in an orthonormal basis. We denote by \( P \in O(d) \) and \( D = \text{diag}(\alpha_1, \dots, \alpha_d) \), where for all $i$, $\alpha_i > 0$, the matrices such that \( \Delta = PDP^{-1} \). We can then define, for any \( s \in \R \), the matrix 
\begin{equation}
    \label{eq:def_power_Delta}
    \Delta^s = P \, \text{diag}(\alpha_1^s, \dots, \alpha_d^s) P^{-1}.
\end{equation}
In particular, the matrix $\Delta^{-\frac{1}{2}}$ appearing in Theorem~\ref{thm:DW_formula_exponent} is computed thanks to \eqref{eq:def_power_Delta} at $s=-\frac{1}{2}$.

The orthant $\mathbb R^d_+$ is bounded by the hyperplanes  
\begin{equation}
\label{eq:def_Gi}
G_i = \text{Span}\{e_1, \dots, e_{i-1}, e_{i+1}, \dots, e_d\},\quad i \in \{1, \dots, d\},
\end{equation}
and thus $T$ in \eqref{eq:def_T} is bounded by the hyperplanes \(H_i = \Delta^{-\frac{1}{2}} G_i \). 
We set
\begin{equation}
    \label{eq:def_u_i}
    u_i = \Delta^{\frac{1}{2}} e_i,
\end{equation}
A first observation is:
\begin{lem}\label{H_orth}
Let $u_i$ be defined in \eqref{eq:def_u_i}. It holds that 
\begin{equation}
    \label{eq:def_H_i}
    H_i = \langle u_i \rangle^\perp.
\end{equation}
\end{lem}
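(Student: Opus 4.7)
The plan is to observe that both $H_i$ and $\langle u_i\rangle^\perp$ are hyperplanes of $\mathbb R^d$, so it suffices to prove one inclusion. I would establish $H_i \subset \langle u_i\rangle^\perp$ by a direct computation using the self-adjointness of $\Delta^{\frac{1}{2}}$ and $\Delta^{-\frac{1}{2}}$, which follows from the definition \eqref{eq:def_power_Delta} together with the fact that $\Delta$ is symmetric positive definite (so $P\in O(d)$ and the spectral decomposition yields symmetric real powers).

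More concretely, I would fix $v \in G_i$; by \eqref{eq:def_Gi} this means $\langle v, e_i\rangle = 0$. A generic element of $H_i = \Delta^{-\frac{1}{2}} G_i$ is of the form $\Delta^{-\frac{1}{2}} v$, and using symmetry of $\Delta^{-\frac{1}{2}}$ together with the relation $\Delta^{-\frac{1}{2}}\Delta^{\frac{1}{2}} = \Id$, I would compute
\begin{equation*}
   \langle \Delta^{-\frac{1}{2}} v, u_i\rangle
   = \langle \Delta^{-\frac{1}{2}} v, \Delta^{\frac{1}{2}} e_i\rangle
   = \langle v, \Delta^{-\frac{1}{2}}\Delta^{\frac{1}{2}} e_i\rangle
   = \langle v, e_i\rangle = 0,
\end{equation*}
which shows $H_i \subset \langle u_i\rangle^\perp$.

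For equality I would simply invoke dimensions: $\Delta^{-\frac{1}{2}}$ is an isomorphism of $\R^d$ (it is invertible with inverse $\Delta^{\frac{1}{2}}$), so $H_i = \Delta^{-\frac{1}{2}}G_i$ has the same dimension $d-1$ as $G_i$; on the other hand $u_i = \Delta^{\frac{1}{2}} e_i$ is non-zero (again by invertibility of $\Delta^{\frac{1}{2}}$), so $\langle u_i\rangle^\perp$ is also a hyperplane. A strict inclusion between two subspaces of the same finite dimension is impossible, hence $H_i = \langle u_i\rangle^\perp$.

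There is no real obstacle here; the statement is a two-line consequence of the symmetry of $\Delta^{\pm \frac{1}{2}}$. The only point to keep in mind is to make the non-vanishing of $u_i$ explicit (so that $\langle u_i\rangle^\perp$ is genuinely $(d-1)$-dimensional) and to record clearly that $\Delta^{-\frac{1}{2}}$ being self-adjoint is what allows the scalar product to be transported across the map.
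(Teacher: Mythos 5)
Your proof is correct and rests on the same key fact as the paper's, namely the self-adjointness of $\Delta^{-\frac{1}{2}}$ used to move the operator across the scalar product; the paper merely packages the argument as a chain of equivalences computing $\langle H_i\rangle^\perp = \langle u_i\rangle$, whereas you prove one inclusion and finish by counting dimensions. The two are interchangeable.
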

\begin{proof} Indeed, using that $\Delta^{-\frac{1}{2}}$ is a symmetric matrix, we have
\begin{align*}
    u \in \langle H_i\rangle^\perp &\Leftrightarrow \forall v \in H_i, \, \langle v,u \rangle =0 \,
    \Leftrightarrow \forall v \in \Delta^{-\frac{1}{2}} G_i, \, \langle v,u \rangle =0 \\
    &\Leftrightarrow \forall j\neq i , \, \langle \Delta^{-\frac{1}{2}} e_j , u \rangle =0 \,
    \Leftrightarrow \forall j\neq i , \, \langle e_j , \Delta^{-\frac{1}{2}} u \rangle =0\\
    &\Leftrightarrow \Delta^{-\frac{1}{2}} u = \lambda e_i \, , \, \, \lambda \in \R \,
    \Leftrightarrow u = \lambda \Delta^{\frac{1}{2}} e_i = \lambda u_i\, , \, \, \lambda \in \R.\qedhere
\end{align*}
\end{proof}

\begin{prop}
\label{prop:reformulation_cov_matrix}
Let $u_i$ be defined in \eqref{eq:def_u_i}. For all $i$, $\| u_i \| = 1$. Moreover, for all $i,j$, the angle between the vectors \( u_i \) and \( u_j \) is \( \alpha_{i,j} := \arccos a_{i,j} \in(0,\pi)  \).
\end{prop}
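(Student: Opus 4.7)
The plan is to reduce both claims to the single identity $\langle u_i, u_j\rangle = a_{i,j}$, which falls out of the definitions. Indeed, since $\Delta^{\frac{1}{2}}$ is symmetric (it is defined via \eqref{eq:def_power_Delta} with orthogonal $P$), I would write
\begin{equation*}
\langle u_i, u_j\rangle \,=\, \langle \Delta^{\frac{1}{2}} e_i, \Delta^{\frac{1}{2}} e_j\rangle \,=\, \langle e_i, \Delta^{\frac{1}{2}}\Delta^{\frac{1}{2}} e_j\rangle \,=\, \langle e_i, \Delta e_j\rangle \,=\, a_{i,j}.
\end{equation*}
Setting $i=j$ and using the hypothesis that the diagonal of $\Delta$ is $1$ gives $\|u_i\|^2 = a_{i,i}=1$, proving the first assertion. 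For the second, the standard expression for the angle between two nonzero vectors then yields $\cos(\widehat{u_i,u_j}) = \langle u_i,u_j\rangle / (\|u_i\|\|u_j\|) = a_{i,j}$.

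The only point that is not immediate is that this angle belongs to the \emph{open} interval $(0,\pi)$, i.e.\ that $a_{i,j}\in(-1,1)$ when $i\neq j$. I would derive this from Cauchy--Schwarz: $|a_{i,j}|=|\langle u_i,u_j\rangle|\leq \|u_i\|\|u_j\|=1$, with equality if and only if $u_i$ and $u_j$ are collinear. Because $\Delta$ is positive definite, $\Delta^{\frac{1}{2}}$ is invertible, so it preserves linear independence; as $e_i$ and $e_j$ are linearly independent for $i\neq j$, so are $u_i$ and $u_j$, and the Cauchy--Schwarz inequality is strict. Hence $a_{i,j}\in(-1,1)$ and $\arccos a_{i,j}\in(0,\pi)$.

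There is really no hard step: the whole proof is a two-line computation plus a strictness check in Cauchy--Schwarz. If anything, the subtle point is bookkeeping -- making sure the reader sees that the identity $\langle \Delta^{\frac{1}{2}} x, \Delta^{\frac{1}{2}} y\rangle = \langle x, \Delta y\rangle$ is exactly the symmetry of $\Delta^{\frac{1}{2}}$ (equivalently, the fact that $\Delta^{\frac{1}{2}}$ is its own transpose because $P$ is orthogonal in \eqref{eq:def_power_Delta}), and that positive-definiteness is used only to guarantee invertibility, which in turn gives the strict inequality in the angle statement.
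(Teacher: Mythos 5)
Your proof is correct and follows essentially the same route as the paper: both reduce everything to the identity $\langle u_i,u_j\rangle=\langle \Delta^{\frac{1}{2}}e_i,\Delta^{\frac{1}{2}}e_j\rangle=e_i^\intercal\Delta e_j=a_{i,j}$ via the symmetry of $\Delta^{\frac{1}{2}}$, then specialise to $i=j$ for the norm and use the standard cosine formula for the angle. Your additional strict Cauchy--Schwarz argument (via invertibility of $\Delta^{\frac{1}{2}}$) justifying that $\alpha_{i,j}$ lies in the \emph{open} interval $(0,\pi)$ is a small but genuine improvement, as the paper's proof asserts this range without comment.
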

	
	\begin{proof}
To prove the first point, notice that $\Delta^{\frac{1}{2}}$ is a symmetric matrix, and thus
\begin{equation*}
    \|u_i \|^2 = \langle \Delta^{\frac{1}{2}} e_i, \Delta^{\frac{1}{2}} e_i \rangle = \langle e_i, \Delta e_i \rangle = e_i^\intercal \Delta e_i = a_{i,i} = 1 .
\end{equation*}
To show the second assertion, note that, on the one hand, \( \langle u_i, u_j \rangle = \cos(\alpha_{i,j}) \|u_i\| \|u_j\| = \cos(\alpha_{i,j}) \), and on the other hand, \( \langle u_i, u_j \rangle = \langle \Delta^{\frac{1}{2}} e_i, \Delta^{\frac{1}{2}} e_j \rangle = e_i^\intercal \Delta e_j = a_{i,j} \). Thus, \( \cos(\alpha_{i,j}) = a_{i,j} \).
	\end{proof}

We now compute the angles between any two hyperplanes defining \(T = \Delta^{-\frac{1}{2}} \mathbb{R}_+^d\). We define the angle \(\widehat{H_i H_j}\) between two hyperplanes \(H_i\) and \(H_j\) bounding $T$ by the interior angle within the orthant \(T\), as shown in Figure~\ref{fig:angle_Hi_Hj}. As the following result proves, the interior angles of $T= \Delta^{-\frac{1}{2}} \mathbb{R}_+^d$ can be read directly from the matrix $\Delta$.

	\begin{center}
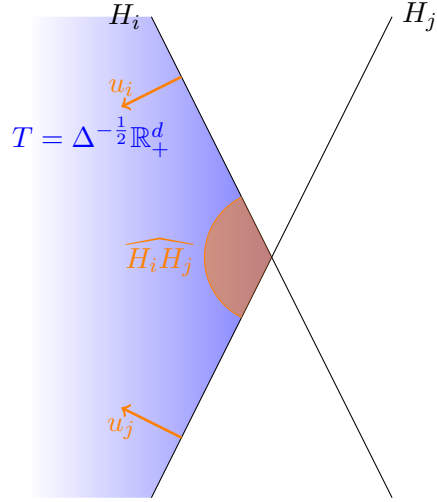
		\begin{tikzpicture}[scale=0.8]
		    \shade[left color=white, right color=blue, opacity=0.5] (-4,4)--(-2,4)--(0,0)--(-2,-4)--(-4,-4)--(-4,4);
		    \draw[blue] (-3,2) node{$T=\Delta^{-\frac{1}{2}} \mathbb{R}_+^d$};
			\draw (2,4) node[right]{$H_j$}--(-2,-4);
			\draw (-2,4) node[left]{$H_i$}--(2,-4);
			\draw[->, line width=1pt, color=orange] (-1.5,3)--(-2.5,2.5) node[above]{$u_i$};
			\draw[orange] (-1.12,0) node[left]{$\widehat{H_i H_j}$};
			\draw[->, line width=1pt, color=orange] (-1.5,-3)--(-2.5,-2.5) node[below]{$u_j$};
			\draw[orange] (-0.5,1) arc (116.57:243.43:1.12);
		    \fill[orange, opacity=0.5] (0,0)--(-0.5,1) arc (116.57:243.43:1.12)--cycle;
		\end{tikzpicture}
		\captionof{figure}{Measure of the angle $\widehat{H_i H_j}$ between two hyperplanes $H_i$ and $H_j$ in $T$, in terms of the angle $\widehat{\left(u_i,u_j\right)}$ between the normal vectors $u_i$ and $u_j$ in \eqref{eq:def_u_i}. The relationship between the angle $\widehat{H_i H_j}$ and $\widehat{\left(u_i,u_j\right)}$ is determined by the orientation of the normal vectors $u_i$ and $u_j$ being inward: $\widehat{\left(u_i,u_j\right)}=\pi - \widehat{H_i H_j}$.}
		 \label{fig:angle_Hi_Hj}
	\end{center}

\begin{prop}
\label{prop:angleHiHj}
It holds that 
 \begin{equation}
 \label{angle}
\widehat{H_i H_j}=-\arccos a_{i,j}.
\end{equation}
\end{prop}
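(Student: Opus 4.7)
The strategy is to combine Lemma~\ref{H_orth} and Proposition~\ref{prop:reformulation_cov_matrix} with a short orientation check. Lemma~\ref{H_orth} already identifies $u_i$ as a unit normal to the bounding hyperplane $H_i$ of $T$, and Proposition~\ref{prop:reformulation_cov_matrix} computes the angle between these normals as $\widehat{(u_i, u_j)} = \arccos a_{i,j}$. What remains is only the elementary dihedral-angle identity illustrated in Figure~\ref{fig:angle_Hi_Hj}: when $u_i$ and $u_j$ are both \emph{inward} unit normals to two bounding hyperplanes of a convex polyhedral region, the angle between the normals and the interior dihedral angle are supplementary,
\begin{equation*}
\widehat{(u_i,u_j)} = \pi - \widehat{H_i H_j}.
\end{equation*}

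The only point requiring actual verification is therefore that $u_i = \Delta^{\frac{1}{2}} e_i$ really points into $T$ across $H_i$ (and not out of $T$). To see this, take $z \in H_i$ and write $z = \Delta^{-\frac{1}{2}} y$ with $y \in G_i$, so $y_i = 0$ and $y_j \geq 0$ for $j \neq i$. A small displacement $z + tv$ with $t > 0$ lies in $T$ precisely when $\Delta^{\frac{1}{2}}(z+tv) = y + t\Delta^{\frac{1}{2}}v \in \mathbb R_+^d$; since the only coordinate of $y$ that is not strictly positive is the $i$-th, for sufficiently small $t$ this is equivalent to $\langle e_i, \Delta^{\frac{1}{2}}v\rangle \geq 0$, that is, $\langle u_i, v\rangle \geq 0$ using that $\Delta^{\frac{1}{2}}$ is symmetric. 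Hence $u_i$ is indeed the inward unit normal to $H_i$ at every boundary point, and likewise for $u_j$.

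Combining the supplementary-angle identity with Proposition~\ref{prop:reformulation_cov_matrix} then gives
\begin{equation*}
\widehat{H_i H_j} \;=\; \pi - \widehat{(u_i,u_j)} \;=\; \pi - \arccos a_{i,j} \;=\; \arccos(-a_{i,j}),
\end{equation*}
which is precisely the claim \eqref{angle} (its right-hand side being naturally read as $\arccos(-a_{i,j}) = \pi - \arccos a_{i,j}$). I expect no genuine obstacle: all the substantive geometry is already contained in the two preliminary results, and only the orientation of the inward normals demands care — and this is handled in one line by the elementary calculation above.
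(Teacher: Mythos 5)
Your proposal is correct and follows essentially the same route as the paper: establish that $u_i=\Delta^{\frac{1}{2}}e_i$ is the \emph{inward} unit normal to $H_i$ by pushing a generic point of the facet into $T$ and checking positivity of coordinates after applying $\Delta^{\frac{1}{2}}$, then invoke the supplementary-angle identity together with Proposition~\ref{prop:reformulation_cov_matrix}. The only (immaterial) difference is that you characterise inwardness for an arbitrary displacement direction $v$ via $\langle u_i,v\rangle\geq 0$, whereas the paper checks the single direction $v=u_i$ at an explicitly chosen facet point; your reading of the right-hand side of \eqref{angle} as $\arccos(-a_{i,j})=\pi-\arccos a_{i,j}$ matches what the paper actually proves.
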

\begin{proof}
Fix \( i \in \{1, \dots, d\} \). We prove that \( u_i \) is an inward,  normal unit vector with respect to $T$. We have already shown that $u_i$ is a normal unit vector, see Lemma~\ref{H_orth}. To prove the inward property, we take any point on $v \in H_i \cap \partial T$ (but not in $H_j$ for $j \not=i$)  and show  that for small $t>0$,  \( v + t u_i \in T  \).
Take for instance $v = \Delta^{-\frac{1}{2}} w$, with
\[w =  e_1 + \dots +  e_{i-1} +  e_{i+1} + \dots +  e_d .
\]
We have $w \in G_i \cap \partial \R_+^d$ and $w \not\in G_j$ if $j \not=i$, remembering that \( G_i \) is defined in \eqref{eq:def_Gi}. Using~\eqref{eq:def_u_i}, we can write
$$
    v+tu_i
    = \Delta^{-\frac{1}{2}}( w + t\Delta e_i).
$$
Denoting by \( \mu_1, \dots, \mu_d \) the coordinates of  $w + t  \Delta e_i$ in the canonical basis, we get that, since  $\Delta=(a_{i,j})_{1 \leq i,j \leq d}$:  
\[
\forall j \in \{1, \dots, d\} \setminus \{i\}, \quad \mu_j = 1 + t a_{j,i} \quad \text{and} \quad \mu_i = t a_{i,i} = t.
\]
All these coordinates are positive for small $t>0$ and thus $w+t\Delta e_i \in \R_+^d$, which proves that   $v+tu_i\in T = \Delta^{-\frac{1}{2}} \R_d^+$. This shows that  \( u_i \) is inward, which clearly implies that for all \( i, j \in \{1, \dots, d\} \),   
\[
\widehat{\left( u_i, u_j \right)} = \pi - \widehat{H_i H_j}.
\]
By applying Proposition ~\ref{prop:reformulation_cov_matrix}, 
Equation~\eqref{angle} follows. 
\end{proof}

Let \( \mathcal S \) be a set of directions and \( \chi_\mathcal S \) the associated polynomial \eqref{eq:inventory}. We denote by \( G \) the group \eqref{eq:group_G_def} generated by the transformations \eqref{eq:expression_generators}, which we reformulate as
\( \varphi_i = \langle x_1, \dots, x_{i-1}, \sigma_i, x_{i+1}, \dots, x_d \rangle \), with \( \sigma_i = \frac{1}{x_i} \frac{C_i}{A_i} \). From now on, $\Delta=(a_{i,j})_{1 \leq i,j \leq d}$ is the covariance matrix introduced in Theorem \ref{thm:DW_formula_exponent}.

\subsection{Main result}

The main result in this section is a reformulation of the group $G$ as the preimage by a morphism of a reflection group. To state this result, we recall our notation $H_i = \langle u_i \rangle^\perp$ and introduce $r_i$, the orthogonal reflection with respect to $H_i$. Finally, we introduce the reflection group
\begin{equation}
    \label{eq:group_H_def}
    H = \langle r_1,\ldots,r_d\rangle.
\end{equation}
By definition, this is the group generated by the reflections with respect to the sides of the linear transformation of the orthant \( \Delta^{-\frac{1}{2}} \R_+^d \). 

We need to define two more quantities. First, we introduce the application
	\begin{equation}
	    \label{eq:Jacobian_application}	    	J :\left|\begin{array}{ccc}
		  G & \to & GL_d(\R) \\
		 g & \mapsto & \Jac_{\boldsymbol{x_0}} g 
	\end{array}\right.
	\end{equation}
	where \( \boldsymbol{x_0} \) is the unique minimum of \( \chi_\mathcal S \), see \eqref{eq:chi_critical_point}. 
Secondly, we define a symmetric bilinear form on $\mathbb R^d$ by setting, for all $h, k \in \R^d$,
\begin{equation}
\label{eq:scalar_product_def}
    [h, k] := \text{D}^2 \chi_\mathcal S (\boldsymbol{x_0})(h, k) = \sum\limits_{i,j=1}^d \frac{\partial^2 \chi_\mathcal S}{\partial x_i \partial x_j}(\boldsymbol{x_0}) h_i k_j
\end{equation}
and we set \( f_i = \frac{e_i}{\sqrt{[e_i, e_i]}} \).
The function in \eqref{eq:scalar_product_def} is the quadratic form associated with the covariance matrix of the random walk $(X_n)_{n\geq0}$ introduced in Section~\ref{sec:preliminaries}. Notice that the Jacobian of some elements of the group $G$ was already used in \cite[Sec.~3]{BMMi-10} (for two-dimensional models)\ and in \cite{DuHoWa-16,KaWa-17} (for three-dimensional models), in order to prove that in some cases, the group $G$ is infinite. As we will see later, the Jacobian application \eqref{eq:Jacobian_application} can be used to obtain various and new results about the group $G$. In particular, it works in any dimension and applies to the comparison of the groups $G$ in \eqref{eq:group_G_def} and $H$ in \eqref{eq:group_H_def}.

\begin{thm}
\label{thm:prop_scalar_product}
The following assertions hold true:
\begin{enumerate}[label={\rm(\roman*)},ref={\rm(\roman*)}]
\item The map $J$ is a morphism.
\item\label{it1:scalar} \( [\cdot,\cdot] \) is a scalar product.
			\item\label{it2:scalar}\( [\cdot,\cdot] \) is invariant under \( \Ima J \).
			
			\item\label{it3:scalar}The map
			\begin{equation}
			    \label{eq:def_Phi}
			   	\Phi :\left|\begin{array}{ccc}
				  \R^d & \to & \R^d\\
				 f_i & \mapsto & u_i 
			\end{array}\right.
			\end{equation}
			is an isometry from \( (\R^d, [\cdot,\cdot]) \) to \( (\R^d, \langle\cdot,\cdot\rangle) \).
			\item\label{it4:scalar}For all \( i \), \( S_i = J \varphi_i \) is the orthogonal reflection with respect to \( e_i \) in \( (\R^d, [\cdot,\cdot]) \).
		\end{enumerate}
	\end{thm}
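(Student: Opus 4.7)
The backbone of the proof is the observation that $\boldsymbol{x_0}$ is a fixed point of every generator $\varphi_i$. Indeed, from $\chi_\mathcal S = x_i A_i + B_i + \overline{x_i} C_i$ one has $\partial_{x_i}\chi_\mathcal S = A_i - \overline{x_i}^2 C_i$, and the critical-point equations~\eqref{eq:chi_critical_point} give $C_i/A_i = x_{0,i}^2$ at $\boldsymbol{x_0}$, so $\overline{x_{0,i}} C_i/A_i = x_{0,i}$ and hence $\varphi_i(\boldsymbol{x_0}) = \boldsymbol{x_0}$; this propagates to every $g \in G$. The chain rule then yields
\[ J(g\circ h) = \Jac_{\boldsymbol{x_0}}(g\circ h) = \Jac_{h(\boldsymbol{x_0})}(g)\cdot\Jac_{\boldsymbol{x_0}}(h) = J(g)\cdot J(h), \]
establishing (i). For (ii), $\boldsymbol{x_0}$ being a minimum of $\chi_\mathcal S$ gives positive semidefiniteness of the Hessian; strict positive definiteness follows from the irreducibility assumption~\ref{it:hypothesis_irreducible}, which forces $\mathcal S$ to span $\R^d$ and hence makes the covariance of the Cramér-transformed walk $(X_n)_{n\geq 0}$ non-degenerate.

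For (iii), a direct calculation from the decomposition of $\chi_\mathcal S$ shows the invariance $\chi_\mathcal S \circ \varphi_i = \chi_\mathcal S$: the first and third terms swap under $x_i \mapsto \overline{x_i} C_i/A_i$, while $B_i$ is untouched. Differentiating this identity twice at $\boldsymbol{x_0}$ and using $\nabla\chi_\mathcal S(\boldsymbol{x_0}) = 0$ to annihilate the first-order chain-rule term yields $[S_i h, S_i k] = [h,k]$ for the generators $S_i = J\varphi_i$, whence invariance of $[\cdot,\cdot]$ under all of $\Ima J$ by part~(i). Part~(iv) reduces to a matrix-coefficient comparison: from~\eqref{eq:scalar_product_def} and~\eqref{eq:expression_covariance_matrix}, $[f_i,f_j] = a_{i,j}$, which equals $\langle u_i,u_j\rangle$ by Proposition~\ref{prop:reformulation_cov_matrix}; since $(f_i)$ and $(u_i)$ are bases, $\Phi$ extends linearly to an isometry.

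The substantive step is (v). The orthogonal reflection with respect to $e_i$ in $(\R^d,[\cdot,\cdot])$ has the formula $R_i(v) = v - 2\frac{[v,e_i]}{[e_i,e_i]} e_i$, so in the canonical basis its matrix is the identity except in the $i$-th row, which reads $\bigl(-2\,\partial^2_{x_i x_k}\chi_\mathcal S(\boldsymbol{x_0})/\partial^2_{x_i^2}\chi_\mathcal S(\boldsymbol{x_0})\bigr)_{k\neq i}$ with diagonal entry $-1$. The plan is to compute $\Jac_{\boldsymbol{x_0}}\varphi_i$ entry by entry: rows $j\neq i$ are trivially $e_j^\top$, and for the $i$-th row, differentiating $\overline{x_i} C_i/A_i$ at $\boldsymbol{x_0}$ gives $-1$ on the diagonal (via $C_i/A_i = x_{0,i}^2$), while for $k\neq i$ one expresses $\partial_{x_k}(\overline{x_i} C_i/A_i)$ in terms of $\partial_{x_k} A_i$ and $\partial_{x_k} C_i$ and uses the identities $\partial^2_{x_i x_k}\chi_\mathcal S = \partial_{x_k} A_i - \overline{x_i}^2\partial_{x_k} C_i$ and $\partial^2_{x_i^2}\chi_\mathcal S = 2\overline{x_i}^3 C_i$, both evaluated at $\boldsymbol{x_0}$, to identify the entry with $-2\,\partial^2_{x_i x_k}\chi_\mathcal S/\partial^2_{x_i^2}\chi_\mathcal S$.

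The main obstacle is precisely this last bookkeeping step in~(v) — keeping track of the powers of $x_{0,i}$ and of the quotients of $A_i$ and $C_i$ while converting them to second derivatives of $\chi_\mathcal S$. Once this is done, everything fits together: the rest of the theorem reduces to the chain rule applied at a common fixed point and to the invariance $\chi_\mathcal S \circ \varphi_i = \chi_\mathcal S$.
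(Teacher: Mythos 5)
Your proposal is correct, and parts (i)--(iv) follow essentially the same route as the paper: fixed point $\varphi_i(\boldsymbol{x_0})=\boldsymbol{x_0}$ plus the chain rule for (i), positive definiteness of the covariance for (ii), double differentiation of $\chi_\mathcal S\circ g=\chi_\mathcal S$ at the critical point for (iii), and the identity $[f_i,f_j]=a_{i,j}=\langle u_i,u_j\rangle$ for (iv). (Your explicit verification that the terms $x_iA_i$ and $\overline{x_i}C_i$ swap under $\varphi_i$ is a welcome justification of $\chi_\mathcal S\circ\varphi_i=\chi_\mathcal S$, which the paper uses without comment.) The genuine divergence is in (v). You propose to compute every entry of the $i$-th row of $\Jac_{\boldsymbol{x_0}}\varphi_i$ and match it against the reflection formula $v\mapsto v-2\frac{[v,e_i]}{[e_i,e_i]}e_i$; I checked your bookkeeping and it does close up, since $\partial_{x_k}\bigl(\overline{x_i}C_i/A_i\bigr)(\boldsymbol{x_0})=\frac{1}{x_0^i}\frac{\partial_{x_k}C_i}{A_i}-x_0^i\frac{\partial_{x_k}A_i}{A_i}$ coincides with $-2\,\partial^2_{x_ix_k}\chi_\mathcal S/\partial^2_{x_i^2}\chi_\mathcal S$ after substituting $C_i=(x_0^i)^2A_i$. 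The paper avoids this entirely: it computes only the diagonal entry $\frac{\partial\varphi_i^i}{\partial x_i}(\boldsymbol{x_0})=-1$, observes that $S_i$ is then the identity off the $i$-th row, hence has eigenvalue $1$ with multiplicity $d-1$ and $-1$ with eigenvector $e_i$, and uses the already-proved invariance (iii) to show that the $+1$-eigenspace is $[\cdot,\cdot]$-orthogonal to $e_i$ (via $[x,e_i]=[S_ix,S_i^2e_i]=[x,-e_i]=0$), concluding by a dimension count. The structural argument is shorter and less error-prone; your direct computation buys an explicit closed form for all the entries of $S_i$, which is occasionally useful (it is exactly what one needs in the examples of Section~\ref{sec:app}), at the cost of the bookkeeping you correctly identify as the main obstacle.
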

	
	\begin{proof}
Let us first show that \( J \) is a morphism. A first well-known observation is that since \( \boldsymbol{x_0} \) is the minimum of \( \chi_{\mathcal S} \), $\boldsymbol{x_0}$ is a fixed point of all $g \in G$. 
It suffices to check this fact for all generators $\varphi_i$ of $G$. Since \( \chi_\mathcal S = \overline{x_i} C_i + B_i + x_i A_i \), we have, if we note $\boldsymbol{x_0}=(x_0^1 \ldots,x_0^d)$,
\[
0=\frac{\partial \chi_\mathcal S}{\partial x_i}\left(\boldsymbol{x_0}\right) = \frac{-1}{\bigl(x_0^i\bigr)^2} C_i\left(\boldsymbol{x_0}\right) + A_i\left(\boldsymbol{x_0}\right).
\]
Thus, 
\[
\varphi_i\left(\boldsymbol{x_0}\right) = \left[x_0^1, \dots, x_0^{i-1}, \frac{1}{x_0^i} \frac{C_i\left(\boldsymbol{x_0}\right)}{A_i\left(\boldsymbol{x_0}\right)}, x_0^{i+1}, \dots, x_0^d\right] = \boldsymbol{x_0}.
\]
For all $g,g' \in G$, we thus get 
\[
J(gg')=\text{Jac}_{\boldsymbol{x_0}} (g \circ g') = \text{Jac}_{g'(\boldsymbol{x_0})} g \, \text{Jac}_{\boldsymbol{x_0}} g' = \text{Jac}_{\boldsymbol{x_0}} g \, \text{Jac}_{\boldsymbol{x_0}} g' = J(g) J(g').
\]
We now prove \ref{it1:scalar}. Obviously using \eqref{eq:expression_covariance_matrix} one has for $h \in \mathbb R^d$ 
\begin{equation*}
    [h, h] = \text{D}^2 \chi_{\mathcal S} (\boldsymbol{x_0})(h, h) = \sum\limits_{i,j=1}^d \frac{\partial^2 \chi_{\mathcal S}}{\partial x_i \partial x_j}(\boldsymbol{x_0}) h_i h_j >0.
\end{equation*}
The positivity follows from the positive definiteness of the covariance matrix of the random walk $(X_n)_{n\geq 0}$, which in turn follows from \ref{it:hypothesis_irreducible}.

We now provide the proof of \ref{it2:scalar}, by showing that
\begin{equation}
    \label{eq:to_prove_JacJac}
    [J(g)h, J(g)k]=[\text{Jac}_{\boldsymbol{x_0}} g(h), \text{Jac}_{\boldsymbol{x_0}} g(k)] = [h, k]
\end{equation}
for all $g\in G$ and all \( h, k \in \R^d \). We have
			\begin{align*}
		[J(g)h, J(g)k]  & = \text{D}^2 \chi_{\mathcal S} (\boldsymbol{x_0})(\text{Jac}_{\boldsymbol{x_0}} g(h), \text{Jac}_{\boldsymbol{x_0}} g(k))\\ 
				& = \sum\limits_{i,j=1}^d \frac{\partial^2 \chi_{\mathcal S}}{\partial x_i \partial x_j}(\boldsymbol{x_0}) (\text{Jac}_{\boldsymbol{x_0}} g(h))_i (\text{Jac}_{\boldsymbol{x_0}} g(k))_j.	
			\end{align*}
By differentiating twice the equality \( \chi_{\mathcal S} \circ g = \chi_{\mathcal S} \) at $\boldsymbol{x_0}$, we obtain
			\[
			\text{D}^2\chi_{\mathcal S}(g(\boldsymbol{x_0})) (J(g)h, J(g)k)   + \text{D}\chi_{\mathcal S}(g(\boldsymbol{x_0})) \circ \text{D}^2g(\boldsymbol{x_0})(h,k) =\text{D}^2\chi_{\mathcal S}(g(\boldsymbol{x_0})) (h, k).		\]
			Since $g(\boldsymbol{x_0})=\boldsymbol{x_0}$ is a critical point of $\chi_\mathcal S$, the last term of the left-hand side vanishes and we get 
			$$		\text{D}^2\chi_{\mathcal S}(g(\boldsymbol{x_0})) (J(g)h, J(g)k) = 	\text{D}^2\chi_{\mathcal S}(g(\boldsymbol{x_0})) (h, k),$$ i.e.,
			$[J(g)h,J(g)k]= [h,k]$.
		This proves Equation \eqref{eq:to_prove_JacJac}.
			
We move to the proof of \ref{it3:scalar}. We show that for all \( i,j \in \{ 1, \dots, d \} \),
$ \langle u_i , u_j \rangle = [f_i ,f_j]$, with $u_k$ defined in \eqref{eq:def_u_i} and \( f_k = \frac{e_k}{\sqrt{[e_k, e_k]}} \). Indeed, we have
\begin{equation*}
    [f_i, f_j] = \left[ \frac{e_i}{\sqrt{[e_i, e_i]}}, \frac{e_j}{\sqrt{[e_j, e_j]}} \right] = \frac{\frac{\partial^2 \chi_{\mathcal S}}{\partial x_i \partial x_j}(\boldsymbol{x_0})}{\sqrt{\frac{\partial^2 \chi_{\mathcal S}}{\partial x_i^2}(\boldsymbol{x_0}) \cdot\frac{\partial^2 \chi_{\mathcal S}}{\partial x_j^2}(\boldsymbol{x_0})}} = a_{i,j} = \langle u_i , u_j \rangle,
\end{equation*}
where we recall that $\Delta=(a_{i,j})_{1 \leq i,j\leq d}$ is the covariance matrix introduced in Theorem \ref{thm:DW_formula_exponent} and where the last equality comes from Proposition~\ref{prop:reformulation_cov_matrix}.

Finally we show \ref{it4:scalar}.  First notice that since $\varphi_i^2=\Id$, it holds that  $S_i^2=(J\varphi_i)^2=\Id$. We compute the following matrix form of $S_i$, where we denote by $\varphi_i^j$ the $j$-th coordinate function of \( \varphi_i = \bigl( x_1, \dots, x_{i-1}, \sigma_i, x_{i+1}, \dots, x_d \bigr) \):
\begin{equation}
\label{eq:expression_S_i}
	S_i = \left(\begin{array}{ccccccc}
	1 & 0 & \cdots & \cdots & \cdots & \cdots & 0\\
	0 & \ddots & \ddots &  \textcolor{white}{\frac{\partial \varphi_i^{2}}{\partial x_4}\left(\boldsymbol{x_0}\right)} & \textcolor{white}{\frac{\partial \varphi_i^{3}}{\partial x_5}\left(\boldsymbol{x_0}\right)} &  \textcolor{white}{\frac{\partial \varphi_i^{2}}{\partial x_{n-1}}\left(\boldsymbol{x_0}\right)} & \vdots\\
	\vdots & \vdots & 1 & 0 & \cdots & \cdots & 0 \\
	\frac{\partial \varphi_i^i}{\partial x_1}\left(\boldsymbol{x_0}\right) & \frac{\partial \varphi_i^i}{\partial x_2}\left(\boldsymbol{x_0}\right) & \cdots & -1 & \cdots & \cdots & \frac{\partial \varphi_i^i}{\partial x_n}\left(\boldsymbol{x_0}\right)\\
	0 & 0 & \cdots & 0 & 1& 0& \vdots\\
	\vdots & \vdots & \textcolor{white}{\frac{\partial \varphi_i^{n-1}}{\partial x_3}\left(\boldsymbol{x_0}\right)} && \ddots & \ddots& \vdots\\
	0 & 0 & \cdots & \cdots & \cdots & 0 & 1\\
	\end{array}\right).
\end{equation}
To prove \eqref{eq:expression_S_i}, we only need to justify the calculations leading to the $i$-th row. Since $\sigma_i=\frac{1}{x_i}\frac{C_i}{A_i}$ by \eqref{eq:expression_generators}, we have $\frac{\partial \varphi_i^i}{\partial x_i}(\boldsymbol{x_0})=\frac{\partial \sigma_i}{\partial x_i}(\boldsymbol{x_0})= -\frac{1}{x_i^2}\frac{C_i}{A_i}$. On the other hand, since $\boldsymbol{x_0}$ is the minimum of $\chi_{\mathcal S}$, we have $\frac{\partial \chi_{\mathcal S}}{\partial x_i}(\boldsymbol{x_0})= -\frac{1}{x_i^2}C_i + A_i = 0$, so that $\frac{\partial \varphi_i^i}{\partial x_i}(\boldsymbol{x_0})=-1$.

The matrix $S_i$ in \eqref{eq:expression_S_i} is diagonalisable, with eigenvalues $-1$ (simple) and $1$  (multiplicity $d-1$). Moreover, from  \eqref{eq:expression_S_i}, $e_i$ is clearly a eigenvector associated to $-1$.

We denote by \( E_i = \{ x \in \R^d \mid S_i(x) = x \} \) the eigenspace of \( \varphi_i \) associated with eigenvalue \( 1 \) and show that \( E_i = \langle e_i \rangle^{\perp_{[\cdot,\cdot]}} \). For \( x \in E_i \), since \( [\cdot,\cdot] \) is invariant under \( \Ima J \) by \ref{it2:scalar}, 
\begin{equation*}
   [x, e_i] = [S_i(x), e_i] = [S_i^2(x), S_i(e_i)] = [x, -e_i].    
\end{equation*}
Hence, \( [x, e_i] = 0 \) and \( E_i \subset \langle e_i \rangle^{\perp_{[\cdot,\cdot]}} \). But since $E_i$ has dimension $d-1$, we get \( E_i = \langle e_i \rangle^{\perp_{[\cdot,\cdot]}} \). Thus, \( S_i \) is indeed the orthogonal reflection with respect to \( E_i \) in \( (\R^d, [\cdot,\cdot]) \).
	\end{proof}
	
We will now state two important consequences of Theorem~\ref{thm:prop_scalar_product}, all of which concern the groups $G$ and $H$. In the first corollary below, we show that the image of $G$ by $J$ in \eqref{eq:Jacobian_application} is isomorphic to the reflection group $H$. 
Before giving the result, we first note that $\Phi$ in \eqref{eq:def_Phi} induces an isomorphism $\widetilde{\Phi} $ between the orthogonal groups $O(\R^d,[\cdot,\cdot])$ and $O(\R^d,\langle \cdot , \cdot \rangle)$.  More precisely, for all $s \in O(\R^d,[\cdot,\cdot])$, define 
$$\widetilde{\Phi}(s) = \Phi \circ s \circ \Phi^{-1},$$ which belongs to $ O(\R^d,[\cdot,\cdot])$ since $\Phi$ is an isometry. Note that by construction we have $\widetilde{\Phi}(S_i)= r_i$ for all $i$. As an immediate consequence we get:
	\begin{cor}
	\label{cor:imJH}
	 The restriction of $\widetilde{\Phi}$ to $\Ima J$ is an isomorphism between the two reflection groups $\Ima J$ and $H$ such that for all $i$, $\widetilde{\Phi}(S_i)=r_i$. In particular, $\widetilde{\Phi} \circ J : G \to H$ is a surjective morphism that sends the set of generators $(\varphi_1, \ldots, \varphi_d)$ of $G$ to the set of generators $(r_1,\ldots,r_d)$ of $H$. 
	\end{cor}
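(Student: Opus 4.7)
The plan is to leverage Theorem~\ref{thm:prop_scalar_product} directly, since essentially all of the work has already been done there; what remains is to stitch its pieces together.

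First I would observe that by part~\ref{it2:scalar} of the theorem, $\Ima J$ sits inside the orthogonal group $O(\R^d,[\cdot,\cdot])$, so the restriction of $\widetilde{\Phi}$ to $\Ima J$ is well-defined and automatically injective, being the restriction of an isomorphism between the ambient orthogonal groups. Moreover, part~\ref{it4:scalar} identifies the $S_i = J\varphi_i$ as reflections, which justifies calling $\Ima J$ a reflection group. The non-trivial task is therefore to identify the image of $\widetilde{\Phi}$ on $\Ima J$, and for this the key computation is $\widetilde{\Phi}(S_i) = r_i$ for every $i$.

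To establish that identity, I would use that $f_i = e_i/\sqrt{[e_i,e_i]}$ is a positive multiple of $e_i$, so the $[\cdot,\cdot]$-orthogonal reflection $S_i$ through $\langle e_i\rangle^{\perp_{[\cdot,\cdot]}}$ coincides with the reflection through $\langle f_i\rangle^{\perp_{[\cdot,\cdot]}}$. Since $\Phi$ is an isometry from $(\R^d,[\cdot,\cdot])$ to $(\R^d,\langle\cdot,\cdot\rangle)$ sending $f_i$ to $u_i$ by part~\ref{it3:scalar}, the conjugate $\widetilde{\Phi}(S_i) = \Phi\circ S_i \circ \Phi^{-1}$ is the $\langle\cdot,\cdot\rangle$-orthogonal reflection through $\Phi(\langle f_i\rangle^{\perp_{[\cdot,\cdot]}}) = \langle u_i\rangle^{\perp}$, which by Lemma~\ref{H_orth} is exactly $r_i$.

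From this point the conclusion is immediate. Since $\Ima J$ is generated by the $S_i$ (because $G$ is generated by the $\varphi_i$ and $J$ is a morphism), the image $\widetilde{\Phi}(\Ima J)$ is generated by the $r_i$ and therefore equals $H$. Thus $\widetilde{\Phi}\colon \Ima J \to H$ is a (surjective, hence bijective) isomorphism of reflection groups, and composing with the surjection $J\colon G\to \Ima J$ yields the surjective morphism $\widetilde{\Phi}\circ J \colon G\to H$ with $(\widetilde{\Phi}\circ J)(\varphi_i) = r_i$. I do not expect any substantive obstacle beyond carefully tracking the rescaling between $e_i$ and the $[\cdot,\cdot]$-unit vector $f_i$, which is precisely the reason $f_i$ was introduced in the first place.
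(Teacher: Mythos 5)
Your proposal is correct and follows essentially the same route as the paper, which states the corollary as an immediate consequence of the remark that $\widetilde{\Phi}(S_i)=r_i$ ``by construction''. You simply make explicit the verification of that identity (reflections are conjugated to reflections by the isometry $\Phi$, and $\langle f_i\rangle=\langle e_i\rangle$ so the relevant hyperplane maps to $\langle u_i\rangle^{\perp}=H_i$), which is exactly the intended argument.
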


While Corollary~\ref{cor:imJH} shows a strong connection between the groups $G$ and $H$, it is important to note that in general these two groups do not coincide. See Example~\ref{ex:ex4} for a concrete example where the group $H$ is of order $8$ while $G$ is infinite. However, we obtain the following consequence:	
	\begin{cor} 
	\label{GinHfin}
		If \( G \) is finite, then \( H \) is finite.
	\end{cor}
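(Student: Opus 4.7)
The plan is to invoke Corollary~\ref{cor:imJH} directly: the content of that corollary is exactly that $\widetilde{\Phi} \circ J : G \to H$ is a \emph{surjective} group homomorphism. Once one has a surjective morphism from a group onto another, finiteness transfers from source to target by a one-line argument, so most of the work is already packaged in the preceding results.

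More concretely, I would proceed as follows. First, I would recall the surjective morphism $\widetilde{\Phi} \circ J : G \to H$ produced by Corollary~\ref{cor:imJH}. Then I would observe that for any group homomorphism $\psi : G \to H$, the image $\psi(G)$ is a subgroup of $H$ whose cardinality is at most $|G|$ (indeed, $|\psi(G)| = [G : \ker \psi] \leq |G|$). Applying this with $\psi = \widetilde{\Phi} \circ J$, the image is finite whenever $G$ is. Finally, by surjectivity, $\psi(G) = H$, so $H$ itself is finite.

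There is no real obstacle here: all the substantive work has already been done in Theorem~\ref{thm:prop_scalar_product} (to identify $\Ima J$ with a reflection group via the isometry $\Phi$) and in Corollary~\ref{cor:imJH} (to assemble the surjective morphism and check that generators are sent to generators, which in particular ensures surjectivity onto $H = \langle r_1, \ldots, r_d\rangle$). The present corollary is just the standard group-theoretic consequence of surjectivity, and the statement is best presented as a short remark rather than a proof with any new content. I would also note in passing, since it is the natural next question, that the converse fails in general, as advertised in Example~\ref{ex:ex4}: the map $\widetilde{\Phi} \circ J$ can have infinite kernel, so finiteness of $H$ does not force finiteness of $G$.
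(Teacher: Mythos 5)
Your proof is correct and matches the paper's (implicit) argument exactly: the corollary is stated as an immediate consequence of the surjective morphism $\widetilde{\Phi}\circ J\colon G\to H$ from Corollary~\ref{cor:imJH}, and your observation that a surjective homomorphic image of a finite group is finite is precisely the intended one-line justification. Your side remark about the converse failing is also consistent with Example~\ref{ex:ex4}.
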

In Propositions~\ref{GHisom}, \ref{presentation}  and \ref{presentation2}, we give sufficient conditions for the groups $G$ and $H$ to be isomorphic. As we will explain in Section~\ref{sec:app_ex}, a possible application of Corollary~\ref{GinHfin} is that if $H$ is infinite, then necessarily $G$ should be infinite. This may be of practical interest, since reflection groups are well understood and more classical than the combinatorial group $G$.

\subsection{Illustration in dimension two}
\label{subsec:illustration_2}

The case of dimension two is the one that has attracted the most attention in the literature, see e.g. \cite{Ma-71,FaIaMa-17,BMMi-10,BoRaSa-14}. Our results do not bring any new progress in this case, but it is interesting to compute the new domain $T$ in \eqref{eq:def_new_domain} and see how it depends on the parameters. 
The covariance matrix \eqref{eq:expression_covariance_matrix} takes the form 
\begin{equation*}
    \Delta = \left(\begin{array}{cc}
    1 & a\\ a&1 \end{array}\right),\quad \text{with } 
     a=a_{1,2}=\frac{\frac{\partial^2 \chi_\mathcal S}{\partial x_1\partial x_2}(\boldsymbol{x_0})}{\sqrt{\frac{\partial^2 \chi_\mathcal S}{\partial x_1^2}(\boldsymbol{x_0})\cdot \frac{\partial^2 \chi_\mathcal S}{\partial x_2^2}(\boldsymbol{x_0})}}.
\end{equation*}
Diagonalizing the matrix $\Delta$ and setting $a=-\cos \alpha$, we easily find
\begin{equation*}
    \Delta^{\frac{1}{2}} =  \left(\begin{array}{rr}
    \cos\bigl(\frac{\pi}{4}-\frac{\alpha}{2}\bigr) & -\sin\bigl(\frac{\pi}{4}-\frac{\alpha}{2}\bigr)\medskip\\ -\sin\bigl(\frac{\pi}{4}-\frac{\alpha}{2}\bigr)&\cos\bigl(\frac{\pi}{4}-\frac{\alpha}{2}\bigr) \end{array}\right).
\end{equation*}
The vectors $u_1$ and $u_2$ in \eqref{eq:def_u_i} are equal to
\begin{equation*}
    u_1 = \left(\begin{array}{r}
    \cos\bigl(\frac{\pi}{4}-\frac{\alpha}{2}\bigr) \\ -\sin\bigl(\frac{\pi}{4}-\frac{\alpha}{2}\bigr) \end{array}\right) \quad \text{and} \quad u_2 = \left(\begin{array}{r}
    -\sin\bigl(\frac{\pi}{4}-\frac{\alpha}{2}\bigr) \\ \cos\bigl(\frac{\pi}{4}-\frac{\alpha}{2}\bigr) \end{array}\right),
\end{equation*}
see Figure~\ref{fig:dim2_vectors}. The hyperplanes $H_i = \langle u_i \rangle^\perp$ in \eqref{eq:def_H_i} are thus given by
\begin{equation*}
    H_1 = \left(\begin{array}{r}
    \sin\bigl(\frac{\pi}{4}-\frac{\alpha}{2}\bigr) \\ \cos\bigl(\frac{\pi}{4}-\frac{\alpha}{2}\bigr) \end{array}\right) \quad \text{and} \quad H_2 = \left(\begin{array}{r}
    \cos\bigl(\frac{\pi}{4}-\frac{\alpha}{2}\bigr) \\ \sin\bigl(\frac{\pi}{4}-\frac{\alpha}{2}\bigr) \end{array}\right).
\end{equation*}
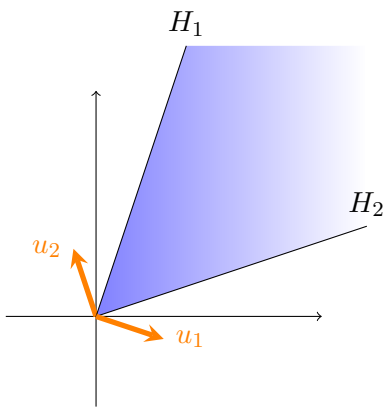
\begin{figure}
    \begin{tikzpicture}[scale=0.60]
        \shade[left color=blue, right color=white, opacity=0.5] (0,0)--(2,6)--(6,6)--(6,2)--(0,0);
        \draw[->] (-2,0)--(5,0);
        \draw[->] (0,-2)--(0,5);
        \draw (0,0)--(6,2) node[above]{$H_2$};
        \draw[->, > = stealth, line width=2pt, color=orange] (0,0)--(-0.5,1.5) node[left]{$u_2$};
        \draw (0,0)--(2,6) node[above]{$H_1$};
        \draw[->, > = stealth, line width=2pt, color=orange] (0,0)--(1.5,-0.5) node[right]{$u_1$};
    \end{tikzpicture}
    \caption{In dimension $2$, the vectors $u_i$ and the hyperplanes $H_i$ are easily computed in terms of the correlation factor $a$. The chamber becomes the domain between the two hyperplanes $H_1$ and $H_2$, which is a wedge of opening $\arccos -a$, as already noticed in \cite[Ex.~2]{DeWa-15}. See also Figure~\ref{fig:action_Delta}.}
    \label{fig:dim2_vectors}
\end{figure}

\section{Applications}
\label{sec:app}

In our opinion, the main interest of our results (Theorem~\ref{thm:prop_scalar_product} and Corollary~\ref{cor:imJH})\ is that they clarify a lot about the connections between the combinatorial group $G$ in \eqref{eq:group_G_def}, the reflection group $H$ in \eqref{eq:group_H_def} and Coxeter groups in general. However, our results also have concrete applications; in particular, they provide some tools to determine whether $G$ is finite or not in several situations.

\subsection{Infinite group criterion in any dimension}
\label{sec:app_ex}

In dimension two and three, the classification of the models with respect to the (in)finiteness of the group $G$ is complete in the case of unweighted models $\mathcal S$ (unweighted means that all non-zero weights $w(s)$ are equal; in other words, the walk jumps uniformly to any element of the step set $\mathcal S$); see \cite{BMMi-10} for the case of dimension two, and \cite{BoBMKaMe-16,DuHoWa-16,BaKaYa-16,KaWa-17} for the case of dimension three. While one can observe by direct computation that a given model admits a finite group (computing all elements of the group, see e.g.\ \cite{BMMi-10,BoBMKaMe-16}), proving that the group $G$ is infinite is more delicate. Let us recall some possible strategies:
\begin{itemize} 
  \item A first observation is that if $H$ is infinite, then $G$ is also infinite. This is obviously a direct application of Corollary~\ref{GinHfin}, but our result is not really needed in the present situation. Indeed, to prove that $G$ is infinite, it is sufficient to show that for some $g\in G$ the matrix $J(g)$ in \eqref{eq:Jacobian_application} is of infinite order. This is the strategy proposed in \cite{BMMi-10} for unweighted models in dimension two. In practice, it is shown that the eigenvalues of $J(g)$ have norm $1$ and that their order on the unit circle $\mathbb{S}^1$ is infinite. 
  \item The above method may fail; typically, in our notation, if $H$ is finite, then $J(g)$ has finite order for any $g\in G$; see Example~\ref{ex:ex4}. In such cases, as we now explain following the authors of \cite{DuHoWa-16}, the argument can be adapted. In fact, it is not necessary to calculate the order of the Jacobian matrix at $\boldsymbol{x_0}$ in \eqref{eq:chi_critical_point}. While $\boldsymbol{x_0}$ is the unique fixed point common to all elements of $G$, any given $g\in G$ admits many more fixed points. For example, denoting $\varphi_1$ and $\varphi_2$ as two generators of the group $G$ as in \eqref{eq:group_G_def}, there exists a fixed point $x_z= (x,y,z)$ of $\varphi_1$ and $\varphi_2$, and thus of the composition $\varphi_1 \varphi_2$, for any value of $z\in\mathbb R^{d-2}$. In the method in \cite{DuHoWa-16}, the difficulty is to find a fixed point where $J_{x_z}(g)$ has an eigenvalue of norm other than $1$. However, if it can be found, then $J_{x_z}(g)$ is of infinite order, and thus the group $G$ is infinite. 
\end{itemize}
The above methods require long and case-by-case computations, with a number of models that grows with dimension (more than $11$ millions of unweighted in dimension three, see \cite{BoBMKaMe-16}). 

Our results allow the previous arguments to be greatly simplified in several situations. As already explained, to prove that $G$ is infinite, it is sufficient to prove that $H$ is infinite. The methods above actually consist of proving that $\Ima J$ is infinite, but working directly with $H$ can be very helpful:
\begin{itemize}
    \item First, note that given $g= \varphi_i \varphi_j$, the order of $J(g)$ is known as soon as $\boldsymbol{x_0}$ is computed. In fact, using our notation from Theorem~\ref{thm:prop_scalar_product}, $J(g)= S_iS_j$ (or $r_ir_j \in H$)\ is then the composition of two reflections and thus a rotation of the angle twice the angle between the hyperplanes $H_i$ and $H_j$ (see \eqref{eq:def_H_i}). Using Proposition~\ref{prop:angleHiHj}, its order can be read directly from the covariance matrix $\Delta$ in \eqref{eq:expression_covariance_matrix}: it is the order of $e^{2i\arccos a_{i,j}}$ in the circle $\mathbb S^1$, where $a_{i,j}$ is the coefficient $(i,j)$ of the covariance matrix $\Delta$. 
    \item If $H$ is finite, the method fails, but we can apply the same ideas to a subgroup $G'$ of $G$ at a fixed point of $G'$.
    \item Another property of $H$ is that it is a reflection group and thus a Coxeter group (see Appendix~\ref{sec:coxeter} for some reminders about Coxeter groups), which are classified. If it is finite, it must belong to a short list of examples, and so to prove that $H$ is infinite it suffices to exclude $H$ from the list of examples. This allows us to deduce  Proposition~\ref{prop:app} below.
\end{itemize} 
In Proposition~\ref{GHisom} below we will further consider the case of a finite group $G$ and give a condition ensuring that $G$ and $H$ are isomorphic.

\subsection{A method to prove that the reflection group is infinite}

We prove the following:
\begin{prop}
\label{prop:app}
Let $(a_{i,j})_{1 \leq i,j \leq d}$ be the coefficients of the covariance matrix $\Delta$ in \eqref{eq:expression_covariance_matrix}. Then the group $H$ is infinite as soon as the following two conditions are satisfied:
\begin{itemize}
    \item $\Delta$ is not, up to a permutation of lines, a matrix diagonal by blocks with a block of size $2$; 
    \item there exist $i \not= j$ such that  $a_{i,j} \not\in \bigl\{0, \pm \frac{1}{2}, \pm \frac{\sqrt{2}}{2}, \pm \frac{\sqrt{3}}{2}, \pm \frac{\sqrt{5}-1}{4}, \pm \frac{ \sqrt{5}+1}{4} \bigr\}$, or equivalently such that $a_{i,j}$ has not the form $\cos(\frac{k \pi}{m})$ with $k \in \Z$ and 
$m\in \{1,2,3,4,5,6\}$.
\end{itemize}
\end{prop}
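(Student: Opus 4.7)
The plan is to argue by contrapositive: assuming $H$ is finite, I will show that condition (i) or condition (ii) must fail. Since $H$ is finite, each product $r_i r_j$ has a finite order $m_{i,j}\geq 2$. Because $r_ir_j$ acts on the plane spanned by $u_i,u_j$ as a rotation of angle $2\widehat{H_iH_j}$, we obtain $\widehat{H_iH_j}=k_{i,j}\pi/m_{i,j}$ with $k_{i,j}$ coprime to $m_{i,j}$, and combining with Proposition~\ref{prop:angleHiHj} yields $a_{i,j}=-\cos(k_{i,j}\pi/m_{i,j})=\cos((m_{i,j}-k_{i,j})\pi/m_{i,j})$.

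Suppose now that condition (ii) holds: there exist $i_0\neq j_0$ such that $a_{i_0,j_0}$ is not of the form $\cos(k\pi/m)$ for any $k\in\Z$ and $m\in\{1,2,3,4,5,6\}$. Then the formula above forces $m_{i_0,j_0}\geq 7$. In particular $r_{i_0}$ and $r_{j_0}$ do not commute and $a_{i_0,j_0}\neq 0$, so $i_0$ and $j_0$ belong to the same block $C$ of $\Delta$, where blocks are defined as the connected components of the graph with edges $\{i,j\}$ such that $a_{i,j}\neq 0$. Under condition (i), $|C|\geq 3$. The subgroup $H_C=\langle r_i:i\in C\rangle$ is finite as a subgroup of $H$, acts faithfully as a rank-$|C|$ reflection group on $V_C=\mathrm{Span}\{u_i:i\in C\}$ (the principal minor of $\Delta$ on $C$ being positive definite ensures linear independence of the $u_i$'s), and is irreducible since its Coxeter diagram, connected by construction of $C$, has vertex set $C$ and labeled edges precisely for pairs with $m_{i,j}\geq 3$.

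By the classical classification of finite irreducible Coxeter groups (see e.g.\ \cite{Hu-90}), the only types of rank $\geq 3$ are $A_n$, $B_n$, $D_n$, $E_6$, $E_7$, $E_8$, $F_4$, $H_3$, $H_4$, and in each case the Coxeter diagram labels all lie in $\{3,4,5\}$. This forces $m_{i,j}\in\{2,3,4,5\}$ for every $i,j\in C$, contradicting $m_{i_0,j_0}\geq 7$. The delicate point is justifying that $H_C$ genuinely falls under the classification: this rests on the fact that a finite irreducible reflection group in $\R^{|C|}$ is determined up to isomorphism by its Coxeter data $(m_{i,j})$, for which the faithful reflection representation on $V_C$ provided by the positive-definite restriction of $\Delta$ is essential.
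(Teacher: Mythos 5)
Your overall strategy is close in spirit to the paper's: argue by contraposition, use condition (i) to rule out rank-two pieces, and invoke the classification of finite irreducible Coxeter groups. Several of your steps are correct and even slightly cleaner than the paper's, notably the reduction of condition (ii) to ``$m_{i_0,j_0}\geq 7$'' via $a_{i,j}=\cos\bigl((m_{i,j}-k_{i,j})\pi/m_{i,j}\bigr)$, and the localisation to the block $C$ with the observation that connectedness of the graph on $C$ forces $H_C$ to be irreducible of rank $|C|\geq 3$ (in an orthogonal product of reflection groups every reflection lives in a single factor, so a split of $H_C$ would disconnect the graph).

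The gap is in the final step. You pass from ``the Coxeter diagram labels of $A_n,B_n,D_n,E_6,E_7,E_8,F_4,H_3,H_4$ lie in $\{3,4,5\}$'' to ``$m_{i,j}\in\{2,3,4,5\}$ for every $i,j\in C$''. The diagram labels are the orders $m(s,s')$ for a \emph{simple} system, i.e.\ for the reflections in the walls of a chamber; nothing guarantees that $(r_i)_{i\in C}$ is a simple system for $H_C$, since the cone $\Delta^{-\frac{1}{2}}\R_+^d$ need not be a chamber of $H$. This is exactly the ``technical difficulty'' the paper flags right before its proof: one cannot apply Proposition~\ref{possible_order} directly to the generators $r_1,\ldots,r_d$. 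Your closing remark that $H_C$ ``is determined up to isomorphism by its Coxeter data $(m_{i,j})$'' does not repair this, because for a non-simple generating set the matrix $(m_{i,j})$ need not be the Coxeter matrix of $H_C$ at all: the transpositions $(12),(13),(14)$ generate $A_3=\mathfrak{S}_4$, have linearly independent roots, and satisfy $m_{i,j}=3$ for all pairs, which is the diagram of the \emph{affine} group $\widetilde{A}_2$. What your argument actually requires is that in a finite irreducible reflection group of rank $\geq 3$ the product of \emph{any} two reflections has order at most $5$; this is true, but it needs either a type-by-type inspection of the angles between arbitrary (not just simple) roots, or the paper's route: if $r_{i_0}r_{j_0}$ had order $\geq 7$, the orbit of $H_{i_0}$ under the rotation group $\langle r_{i_0}r_{j_0}\rangle$ would produce two reflecting hyperplanes of $H$ at angle at most $2\pi/7$, hence a chamber with two walls whose simple reflections $s,s'$ satisfy $m(s,s')\geq 7$ --- and \emph{that} contradicts Proposition~\ref{possible_order}, because chamber walls do form a Coxeter system. (The paper uses condition (i) separately to exclude dihedral irreducible factors, for which $m(s,s')$ is unbounded; your block argument serves the same purpose and is fine.)
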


Let us recall that $H$ is the group spanned by  $r_1,\ldots,r_d$,  the reflections with respect to $H_1:= \langle u_1\rangle^\perp, \ldots, H_d:= \langle u_d\rangle^\perp$, where $u_i= \Delta^{\frac{1}{2}} e_i$, see \eqref{eq:def_u_i}, \eqref{eq:def_H_i} and \eqref{eq:group_H_def}. A technical difficulty is that the conditions of Proposition~\ref{prop:app} do not ensure that $r_1,\ldots,r_d$ is a Coxeter system (see Definition~\ref{def:Coxeter_system} in Appendix~\ref{sec:coxeter} for the concept of a Coxeter system), and so to derive conditions on $a_{i,j}$ we cannot directly use Proposition~\ref{possible_order}, which gives the list of Coxeter systems spanning finite Coxeter groups.

\begin{proof}[Proof of Proposition~\ref{prop:app}]
First, since $(u_1,\ldots,u_d)$ is a basis of $\R^d$, the Coxeter group $H$ has rank $d$. Suppose it is finite. We do not know if it is reducible or not, but assume that it has an irreducible factor (see Appendix~\ref{sec:coxeter}), which is a dihedral group of order $2k$. Consider the set of all roots of $H$, i.e.
\begin{equation}
\label{eq:def_roots}
   R:= \bigl\{ \pm h u_i | h \in H \bigr\}.
\end{equation}
Then, from the classification of finite Coxeter groups, there are exactly $2k$ roots (corresponding to $k$ hyperplanes) which belong to a plane and are orthogonal to the other roots. More precisely, the dihedral group factor is spanned by the $k$ reflections with respect to these hyperplanes.  Since $(u_1,\ldots,u_d)$ is a basis, it must contain exactly two of these $2k$ roots, which are then orthogonal to the other $u_i$. Up to a permutation of the $u_i$ we can assume that these roots are $u_1$ and $u_2$. By Proposition~\ref{prop:reformulation_cov_matrix}, the coefficient $(i,j)$ of $\Delta$ is $0$ if $i=1,2$ and $j\geq 3$, since $u_i \perp u_j$. The matrix $\Delta$ thus has a $2$-diagonal block, which is a contradiction. We deduce that if the assumptions of Proposition~\ref{prop:app} are fulfilled, then $H$ has no irreducible dihedral group factor.

Let now $\Lambda:= \cup_{w \in R} \langle w\rangle^\perp$, with $R$ as in \eqref{eq:def_roots}. Suppose there exist $i \not= j$ such that $\arccos a_{i,j}$ does not have the form $\frac{k\pi}{m}$ for some $k \in \Z$ and $m\in \{1,2,3,4,5,6\}$, and thus $e^{2 i\arccos a_{i,j}}$ has order $\al_{i,j}$ greater than $7$ in $\mathbb S^1$.  Since $r_i$ and $r_j$ are reflections, this means that  $r_i r_j$ is a rotation in the plane $P$ orthogonal to $P^\perp:= H_i \cap H_j$, with angle $-2\arccos a_{i,j}$ which is twice  the angle between $H_i$ and  $H_j$, see \eqref{angle}. The group $\{ (r_ir_j)^k | k\in \Z\}$ is the rotation group spanned by $e^{2 i\arccos a_{i,j}}$ in  $\mathbb S^1$ and since $\al_{i,j} \geq 7$, there exists $k$ such that $(r_ir_j)^k$ is a rotation of angle $\theta \in [0, \frac{2\pi}{7}]$. Introducing $H'_i:= (r_ir_j)^k(H_i)$, this means that $H_i,H_i' \in \Lambda$ are two hyperplanes of $\Lambda$ with angle $\theta$.
 In particular, every chamber of $K$ has two walls whose angle is less than $2 \pi/7$, which by Proposition~\ref{possible_order} implies that $H$ is infinite. 
\end{proof}

\begin{ex}\normalfont
\label{ex:ex1}
We will now illustrate how we can prove that $H$ (and thus $G$)\ is infinite, using simple calculations. Let $(x,y,z,w)$ be the standard Cartesian coordinates of $\R^4$. We 
consider the model whose inventory \eqref{eq:inventory} is given by 
$$\chi_\mathcal S(x,y,z,w)= xy +\overline{y} z +\overline{z}w + \overline{x}yz+\overline{y}z\overline{w}+xy\overline{z}+\overline{x}\overline{y}\overline{z}.$$ 
Here we show that $H$ is infinite by computing the covariance matrix.
By direct calculation (or using the fact that the drift of the model is zero), the fixed point is $\boldsymbol{x_0}=(1,1,1,1)$ and the covariance matrix is 
\[ \Delta = \left( \begin{array}{cccc} 
1 & \frac{1}{\sqrt{6}} & -\frac{1}{2\sqrt{6}} & 0 \\
 \frac{1}{\sqrt{6}} & 1 & -\frac{1}{6} & \frac{1}{2 \sqrt{3}} \\
-\frac{1}{2\sqrt{6}} & -\frac{1}{6} & 1 & - \frac{1}{ \sqrt{3}} \\
0 & \frac{1}{2 \sqrt{3}} & - \frac{1}{ \sqrt{3}} & 1 
\end{array}
\right). \]
Proposition~\ref{prop:app} immediately implies that $H$ is infinite, and therefore $G$ is also infinite. 
\end{ex}
The method presented in Example~\ref{ex:ex1} could be made systematic on a large class of models; we do not explore this line of research in this paper.

\subsection{A tool to determine \texorpdfstring{$\boldsymbol{H}$}{H} when \texorpdfstring{$\boldsymbol{G}$}{G} is known}
We first need the following observation.
\begin{lem}
\label{min_order}
Let $K$ be a rank $d$ reflection group. Then $\vert K\vert \geq 2^d$ with equality if and only if 
$H = \left(\frac{\mathbb Z}{2 \mathbb Z} \right)^d$. 
\end{lem}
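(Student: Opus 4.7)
The plan is to count chambers of the full reflection arrangement of $K$. The rank $d$ hypothesis means $K$ is generated by $d$ reflections $r_1, \ldots, r_d$ through hyperplanes $H_i = u_i^\perp$ whose normals $u_1, \ldots, u_d$ form a basis of $\R^d$; we may also assume $K$ is finite, otherwise the inequality is vacuous.

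First I would invoke the classical theorem (see e.g.\ Humphreys, \textit{Reflection Groups and Coxeter Groups}, \S1.15) that a finite real reflection group acts simply transitively on the open chambers of its full reflection arrangement $\mathcal A_K = \{H_r : r \text{ is a reflection in } K\}$, so $|K|$ equals the number of chambers of $\mathcal A_K$. The subarrangement $\{H_1, \ldots, H_d\}$ alone partitions $\R^d$ into exactly $2^d$ open regions, one per sign pattern $\varepsilon \in \{\pm 1\}^d$ (namely $\{x \in \R^d : \varepsilon_i \langle u_i,x\rangle > 0 \text{ for all } i\}$), since the $u_i$ are linearly independent. Adjoining the remaining hyperplanes of $\mathcal A_K$ can only refine this decomposition, so $|K| \geq 2^d$.

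For the equality case $|K| = 2^d$, one must have $\mathcal A_K = \{H_1, \ldots, H_d\}$. I would then show the generators commute pairwise by contradiction: if $m := |r_i r_j| \geq 3$ for some $i \ne j$, the dihedral subgroup $\langle r_i, r_j\rangle$ of order $2m$ contains $m \geq 3$ distinct reflections, whose hyperplanes all contain the codimension-two subspace $P^\perp$ with $P = \mathrm{span}(u_i, u_j)$. But among $H_1, \ldots, H_d$ only those $H_k$ with $u_k \in P$ contain $P^\perp$, and linear independence of $(u_1,\ldots,u_d)$ forces $k \in \{i,j\}$, giving only two such hyperplanes, a contradiction. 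Hence every $r_i r_j$ has order two, the $H_i$ are pairwise orthogonal, the $r_i$ commute, and in the orthogonal basis $(u_1,\ldots,u_d)$ the group $K$ is identified with $\langle r_1\rangle \times \cdots \times \langle r_d\rangle \cong (\Z/2\Z)^d$. The converse is immediate.

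The main obstacle is the chamber-counting input: one needs to cite (or redevelop) the simple-transitivity theorem for finite reflection groups. Once this is granted, both the lower bound and the dihedral argument for equality are short and present no further subtleties.
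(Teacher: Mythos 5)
Your argument is correct, and it takes a genuinely different route from the paper. The paper proves the lemma purely combinatorially: it picks a Coxeter system $(s_1,\ldots,s_d)$ and applies Matsumoto's theorem to show that the $2^d$ ordered products $s_1^{a_1}\cdots s_d^{a_d}$, $a\in\{0,1\}^d$, are pairwise distinct, and in the equality case uses Matsumoto again to force $s_js_i=s_is_j$. You instead count chambers: simple transitivity of a finite reflection group on the chambers of its full reflection arrangement gives $|K|=\#\{\text{chambers}\}$, the subarrangement $\{H_1,\ldots,H_d\}$ with independent normals already produces $2^d$ regions, and any further reflecting hyperplane would strictly refine the decomposition; equality then forces the arrangement to be exactly $\{H_1,\ldots,H_d\}$, and your dihedral-subgroup argument (an order-$m\geq 3$ product $r_ir_j$ would contribute $m$ reflecting hyperplanes through $H_i\cap H_j$, whereas linear independence of the normals allows only two) correctly rules out non-commuting generators. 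Each approach has its own cost of admission: the paper's needs Matsumoto's theorem (plus the standard fact that increasing words in distinct simple reflections are reduced), and works at the level of abstract Coxeter presentations; yours needs the geometric package for finite reflection groups (simple transitivity on chambers, and the existence of a generating set of $d$ reflections with linearly independent normals -- i.e.\ a simple system, cf.\ Theorem~\ref{chamber_gen} -- which you silently identify with the rank-$d$ hypothesis). In the paper's actual application the group $H$ comes equipped with such generators $r_1,\ldots,r_d$ and independent normals $u_1,\ldots,u_d$, so that identification is harmless here; the only other point worth spelling out is the (easy) fact that a new hyperplane cannot be covered by finitely many others and hence must cut some existing open region in two, which is what makes the refinement strict.
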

\begin{proof}
It comes either from the classification of finite Coxeter groups or from the classical Matsumoto's theorem. 
Let $(s_1,\ldots,s_d)$ be a Coxeter system. Define 
$$X:\left| \begin{array}{ccc} \{0,1\}^d&  \to & K\\
(a_1,\ldots,a_d) & \mapsto  & s_1^{a_1} \cdots s_d^{a_d}
\end{array} \right.$$
Then by Matsumoto's theorem \cite{Mat-64}, $X$ is injective which implies that $\vert K\vert \geq 2^d$. Assume now that $|K| = 2^d$.
This means that 
$K=X( \{0,1\}^d )$. Let $i<j$, then $s_j s_i$ can be written as $s_j s_i= s_{i_1} \cdots s_{i_r}$ for some $1 \leq i_1 < \cdots < i_r \leq d$. Again by Matsumoto's theorem, we have $\{j,i\}=\{i_1, \ldots,i_r\}$. Since $i<j$, this implies that $r=2$, $i_1=i$ and $i_2=j$, and consequently that $s_j s_i = s_i s_j$. This proves that $H = \left(\frac{\mathbb Z}{2 \mathbb Z} \right)^d$. 
\end{proof}

We are ready to give the following application of Corollary~\ref{cor:imJH}, which gives a sufficient condition for the groups $G$ and $H$ to be isomorphic.
\begin{prop} 
\label{GHisom}
Assume that $G$ is finite and let 
\begin{equation*}
    N:=\min\bigl\{|K| \; \big| \; K \hbox{ is a normal subgroup of } G, \, K\not=\{\Id\} \bigr\}.
\end{equation*}
\begin{itemize}
    \item If $|G| < 2^d N$, then $G$ and $H$ are isomorphic. 
    \item If  $|G| = 2^d N$ and $G$ and $H$ are not isomorphic, then  $H = \left(\frac{\mathbb Z}{2 \mathbb Z} \right)^d$.
\end{itemize}
\end{prop}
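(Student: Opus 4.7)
The plan is to study the surjective morphism $\pi := \widetilde{\Phi} \circ J : G \to H$ from Corollary~\ref{cor:imJH} via its kernel. Let $K := \ker \pi$, which is a normal subgroup of $G$, and note that $G/K \cong H$, so $|G| = |K| \cdot |H|$. Since $G$ is finite, so is $H$. Moreover, the images $r_1, \ldots, r_d$ of the generators $\varphi_1, \ldots, \varphi_d$ are reflections through the hyperplanes $H_i = \langle u_i \rangle^\perp$, and because $(u_1, \ldots, u_d)$ is a basis of $\mathbb{R}^d$, the group $H$ has rank $d$ as a reflection group. Thus Lemma~\ref{min_order} applies and gives $|H| \geq 2^d$, whence
\begin{equation*}
    |K| = \frac{|G|}{|H|} \leq \frac{|G|}{2^d}.
\end{equation*}

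In the first case, suppose $|G| < 2^d N$. Then $|K| < N$, so by the very definition of $N$ as the minimal cardinality of a non-trivial normal subgroup of $G$, one is forced to have $K = \{\Id\}$. The morphism $\pi$ is then an isomorphism, proving $G \cong H$.

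In the second case, suppose $|G| = 2^d N$ and that $G$ and $H$ are not isomorphic. Then $\pi$ cannot be injective, so $K$ is a non-trivial normal subgroup of $G$, hence $|K| \geq N$. Combining with $|K| \leq |G|/2^d = N$ yields $|K| = N$ and therefore $|H| = 2^d$. Applying the equality case of Lemma~\ref{min_order} gives $H \cong (\mathbb{Z}/2\mathbb{Z})^d$, as claimed.

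The argument is really just bookkeeping on orders through the kernel–image exact sequence, together with Lemma~\ref{min_order}. The only mild subtlety, and the step I would be most careful about, is justifying that $H$ has rank exactly $d$ so that the bound $|H| \geq 2^d$ is available; this uses that the vectors $u_i = \Delta^{1/2} e_i$ form a basis (which follows from $\Delta$ being positive definite), guaranteeing that no $r_i$ can be expressed via the others, so that $(r_1,\ldots,r_d)$ genuinely generates a rank-$d$ reflection group to which Lemma~\ref{min_order} applies.
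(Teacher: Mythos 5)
Your proposal is correct and follows essentially the same route as the paper: both arguments identify $H$ with $G/\ker(\widetilde{\Phi}\circ J)$ via Corollary~\ref{cor:imJH}, invoke Lemma~\ref{min_order} to get $|H|\geq 2^d$ from the fact that $H$ has rank $d$, and then compare $|\ker(\widetilde{\Phi}\circ J)|$ with $N$ in each case. Your added remark justifying the rank-$d$ claim via the basis $(u_1,\ldots,u_d)$ is exactly the justification the paper uses elsewhere (in the proof of Proposition~\ref{prop:app}).
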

\begin{proof} 
Assume first that $|G| < 2^d N$. By Corollary~\ref{cor:imJH}, $H$ is isomorphic to $G / \ker(\widetilde{\Phi} \circ J)$.   Since $H$ is a finite reflection group of rank $d$, we can deduce from Proposition~\ref{min_order} that $|H| \geq 2^d$. As a consequence, if  $\ker(\widetilde{\Phi} \circ J) \not= \{\Id\}$, then necessarily
$$|G| \geq |H|\; |\ker(\widetilde{\Phi} \circ J)| \geq 2^d N,$$
which contradicts our assumption. This implies that $\ker(\widetilde{\Phi} \circ J) = \{\Id\}$, and so the groups $G$ and $H$ are isomorphic. 

Now suppose that $|G| = 2^d N$ and that $G$ and $H$ are not isomorphic. We must have $|H|=2^d$ and we deduce that $H = \left(\frac{\mathbb Z}{2 \mathbb Z} \right)^d$. 
\end{proof}

We can apply Proposition~\ref{GHisom} in several situations, as the following four examples show. 
\begin{ex}\normalfont
\label{ex:dim2_DH}
Let $p$ be a prime number. Assume $d=2$ and  $G= D_{2p}$ is the dihedral group of order $2p$. Then $N=p$ and thus $|G|=2p < 4 N$. Proposition~\ref{GHisom} shows that the groups $G$ and $H$ are isomorphic. Recall that the only known finite groups in dimension two are $D_{2p}$ for $p\in\{2,3,4,5\}$, see \cite{BMMi-10,KaYa-15}.
\end{ex}

\begin{ex}\normalfont
Suppose $G$ is the permutation group $\mathfrak{S}_{d+1}$. Such a situation occurs in dimension three, see \cite[Tab.~1]{BaKaYa-16}; it also arises naturally when considering the model of non-intersecting lattice paths in arbitrary dimension, see the forthcoming Example~\ref{ex:ex_Coxeter_A}. In this case, we have $|G| < N 2^d$ for all $d\geq 2$, and thus by Proposition~\ref{GHisom}, the groups $G$ and $H$ are isomorphic.

Indeed, if $d=2$, then $|G|=6$, $N=3$ and it holds that $|G| < N 2^2$. If $d=3$ then $|G|= 24$, $N=4$ and  $|G| < N 2^3$. Finally if $d \geq 4$, $|G|= (d+1)!$ and $N= \frac{(d+1)!}{2}$ so that again  $|G| < N 2^d$. Then we conclude that $G$ and $H$ are isomorphic. 
\end{ex}

\begin{ex}\normalfont
Assume here that $d=4$ and $G=\mathfrak{S}_{3} \times \left(\frac{\mathbb Z}{2 \mathbb Z} \right)^2$. Some examples of models corresponding to this situation can be found in \cite[Tab.~1]{BuHoKa-21}. Then $G=|24|$. Since $N \geq 2$, it holds that $|G|< N 2^4$ and thus again $G$ and $H$ are isomorphic. 
 \end{ex}
 
 \begin{ex}\normalfont 
 Assume that $d=4$ and $G=\mathfrak{S}_{3} \times \mathfrak{S}_{3}$. Some examples of models that fit this situation are in \cite[Tab.~2]{BuHoKa-21}. Then $|G|= 36$. We claim that 
    $N \geq 3$. Indeed otherwise, $N =2$ and  there exists $(x,y) \in \mathfrak{S}_{3} \times \mathfrak{S}_{3}$ which has order $2$ and such that $\langle(x,y)\rangle$ is a normal subgroup of $G$. Clearly, this would imply that $\langle x\rangle$ is a normal subgroup of $\mathfrak{S}_{3}$, which is impossible since $\mathfrak{S}_{3}$ has no normal subgroup of order $2$. We conclude that $G$ and $H$ are isomorphic.
    \end{ex}

\subsection{A tool to determine \texorpdfstring{$\boldsymbol{G}$}{G} when \texorpdfstring{$\boldsymbol{H}$}{H} is known} 
\label{toolG}
We first establish the following. 
\begin{prop} 
\label{presentation}
Given any pair $(i,j)$ in $\{1,\ldots,d\}^2$, define $m_{i,j}$ as the orders of $\varphi_i \varphi_j$ in $G$. Let $K_d$ be the Coxeter group spanned by $(a_1,\ldots,a_d)$ and defined by the presentation
$$\mathcal{R} =\big\{(a_i a_j)^{m_{i,j}} |1 \leq i\leq j\leq d\}.$$
Then $|K_d| \geq  |G| \geq |H|$.
In particular, if $|K_d| = |H|$, then $G$ and $H$ are isomorphic.
\end{prop}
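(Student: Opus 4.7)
The plan is to construct a chain of surjective group homomorphisms $K_d \twoheadrightarrow G \twoheadrightarrow H$ and then read off both conclusions from the cardinality of the three groups.

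For the first surjection, I would invoke the universal property of groups defined by a presentation. Note that $m_{i,i}=1$ (since $\varphi_i\varphi_i = \Id$), so the defining relations of $K_d$ include $a_i^2 = 1$, together with $(a_i a_j)^{m_{i,j}} = 1$ for $i < j$. Since by definition of $m_{i,j}$ the generators $\varphi_i$ of $G$ satisfy exactly these relations, the map $a_i \mapsto \varphi_i$ extends uniquely to a group homomorphism $\pi : K_d \to G$. This map is surjective because its image contains all generators $\varphi_i$ of $G$. Hence $|K_d| \geq |G|$ (with the convention that the inequality is trivial if $K_d$ is infinite).

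For the second surjection, Corollary~\ref{cor:imJH} already provides a surjective morphism $\widetilde{\Phi} \circ J : G \to H$ sending $\varphi_i$ to $r_i$. This gives $|G| \geq |H|$ and completes the chain $|K_d| \geq |G| \geq |H|$.

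Finally, suppose $|K_d| = |H|$. Then both inequalities above are equalities, so in particular all three groups have the same (finite) cardinality. The composition $(\widetilde{\Phi} \circ J)\circ \pi : K_d \to H$ is a surjection between finite groups of the same cardinality, hence an isomorphism. This forces $\pi$ to be injective as well (otherwise the composition would have a nontrivial kernel), so $\pi : K_d \to G$ is an isomorphism, and similarly $\widetilde{\Phi} \circ J : G \to H$ is an isomorphism. In particular $G$ and $H$ are isomorphic, which is the desired conclusion. The whole argument is essentially formal; the only subtle point is the bookkeeping of the relations $m_{i,i}=1$ to make sure that the involution relations $\varphi_i^2 = \Id$ are already built into the Coxeter presentation of $K_d$, so that $\pi$ is well defined.
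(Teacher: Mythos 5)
Your proof is correct and follows essentially the same route as the paper: the surjection $K_d\twoheadrightarrow G$ via the universal property of the presentation (the paper phrases this equivalently by comparing normal closures of the relation sets in the free group $F_d$), combined with the surjection $G\twoheadrightarrow H$ from Corollary~\ref{cor:imJH}. Your explicit treatment of the case $|K_d|=|H|$ and the remark that $m_{i,i}=1$ encodes the involution relations are both consistent with the paper's (briefer) argument.
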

Note that it is possible to define $K_d$ because $m_{i,i}=1$ for all $i$.
\begin{proof}
We already know from Corollary~\ref{cor:imJH} that $|H| \leq |G|$. From the definition of a presentation, $K_d$ is isomorphic to the quotient of the free group $F_d$ of rank $d$   by the normal closure of $\mathcal{R}$ in $F_d$ (i.e., the smallest normal subgroup of $F_d$ containing $\mathcal{R}$). 

If $(G| \mathcal{S})$ is a presentation of $G$ (written with the generators $(\varphi_1,\ldots,\varphi_d)$ of $G$), then, since $(\varphi_1,\ldots, \varphi_d)$ satisfy the relations in $\mathcal{R}$, $(G | \mathcal{S}')$ is also a presentation of $G$, where we noted $\mathcal{S}'= \mathcal{S} \cup \mathcal{R}$.  Now again by the definition of a presentation, $G$ is  isomorphic to the quotient of the free group $F_d$    by the normal closure of  $\mathcal{S'}$ in $F_d$. Since $\mathcal{R} \subset \mathcal{S'}$, their normal closures satisfy the same inclusion in $F_d$ and we deduce that $|G| \leq|K_d|$. 
\end{proof}

We show how to apply the above result.
\begin{ex}\normalfont
Consider on $\R^d$ the following model 
\begin{equation*}
    \chi_\mathcal S(x_1,\ldots,x_d) = \overline{x_1} + \sum_{i=1}^{d-1} x_i \overline{x_{i+1}} + x_d.
\end{equation*}
In dimension $2$ this model is often called the tandem walk \cite{BMMi-10}. The latter model can thus be interpreted as a $d$-dimensional tandem model. It can also be interpreted as a possible model of non-intersecting lattice paths, where each coordinate represents the difference between two successive walks (see Example~\ref{ex:ex_Coxeter_A} for a closely related model). 

We calculate that $\boldsymbol{x}_0=(1,\ldots,1)$ and the covariance matrix $\Delta=\bigl(a_{i,j}\bigr)$  is given by 
 $$ a_{i,j} = \left| \begin{array}{rll}
 1 & \hbox{if} & i=j, \\
 -\frac{1}{2} & \hbox{if} & |i-j|=1, \\
 0 & \multicolumn{2}{l}{\textnormal{otherwise.}}
\end{array} \right.$$
From the classification of finite  Coxeter groups, $H$ is isomorphic to the permutation group $\mathfrak{S}_{d+1}$ and  $(r_1,\ldots,r_d)$ is a Coxeter system. As easily computed, the orders $m_{i,j}$ of $\varphi_i \varphi_j$ and $r_ir_j$ are the same and equal to 
$$m_{i,j} = \left| \begin{array}{rll}
 1 & \hbox{if} & i=j, \\
3 & \hbox{if} & |i-j|=1, \\
 2 & \multicolumn{2}{l}{\textnormal{otherwise.}}
 \end{array} \right.$$
 We conclude that $H \equiv \mathfrak{S}_{d-1}$ is isomorphic to $K_d$ and from Proposition \ref{presentation} to $G$.
 \end{ex}

As a concluding remark, if $|K_d|=  \infty$, Proposition~\ref{presentation} does not give any information. A more general result can be obtained in the same way: 
\begin{prop}
 \label{presentation2} 
Assume that $\mathcal{R}$ is a set of relations of the generators $(r_1,\ldots,r_d)$ of $H$ such that $(H|\mathcal{R})$ is a presentation of $H$. If for all $r_{i_1}\ldots r_{i_n} \in \mathcal{R}$, $\varphi_{i_1} \cdots \varphi_{i_n}=\Id$ in $G$, then $G$ and $H$ are isomorphic. 
 \end{prop}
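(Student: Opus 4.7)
The strategy is to construct an inverse to the surjective morphism $\widetilde{\Phi}\circ J : G \to H$ provided by Corollary~\ref{cor:imJH}, using the universal property of a group presentation.

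First, I would invoke the hypothesis that $(H\mid \mathcal{R})$ is a presentation of $H$ to express $H$ as the quotient $F_d / N(\mathcal{R})$, where $F_d$ denotes the free group on generators $a_1,\ldots,a_d$ and $N(\mathcal{R})$ is the normal closure of $\mathcal{R}$ in $F_d$ (with $r_i$ identified with the image of $a_i$). Consider the canonical surjective morphism $\pi : F_d \to G$ sending $a_i \mapsto \varphi_i$, which is well-defined since $(\varphi_1,\ldots,\varphi_d)$ generate $G$. The key observation is that the assumption in the statement, namely that every word $r_{i_1}\cdots r_{i_n}$ in $\mathcal{R}$ satisfies $\varphi_{i_1}\cdots\varphi_{i_n}=\Id$ in $G$, means precisely that $\mathcal{R} \subset \ker \pi$. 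Since $\ker\pi$ is normal in $F_d$, we deduce $N(\mathcal{R}) \subset \ker\pi$, so $\pi$ factors through the quotient $F_d/N(\mathcal{R})\cong H$, yielding a well-defined surjective morphism
\begin{equation*}
    \psi : H \longrightarrow G,\qquad r_i \longmapsto \varphi_i.
\end{equation*}

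Next, I would compose $\psi$ with $\widetilde{\Phi}\circ J$ in both directions. By Corollary~\ref{cor:imJH}, $\widetilde{\Phi}\circ J$ sends each $\varphi_i$ to $r_i$, so $(\widetilde{\Phi}\circ J)\circ \psi$ fixes every generator $r_i$ of $H$ and hence equals $\Id_H$; symmetrically, $\psi\circ (\widetilde{\Phi}\circ J)$ fixes every generator $\varphi_i$ of $G$ and equals $\Id_G$. Both morphisms are therefore bijective, which proves that $G$ and $H$ are isomorphic.

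I do not expect any serious obstacle in this argument: the entire content lies in translating the hypothesis on $\mathcal{R}$ into the statement $\mathcal{R}\subset \ker\pi$, which is merely a reformulation, and then applying the universal property of the presentation. The only small subtlety is that one must check that the two groups of relations are compared in the same free group $F_d$, with the generators $a_i$ mapped consistently to $r_i$ on one side and $\varphi_i$ on the other; this is exactly what ensures that $\psi$ and $\widetilde{\Phi}\circ J$ are mutual inverses on generators, hence on the whole group.
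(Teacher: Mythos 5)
Your proof is correct and follows essentially the same route as the paper: both use the universal property of the presentation $(H\mid\mathcal{R})$ to produce a morphism $H\to G$ sending $r_i\mapsto\varphi_i$, and combine it with the surjection $\widetilde{\Phi}\circ J:G\to H$ of Corollary~\ref{cor:imJH}. Your version is in fact slightly more careful, since you verify that the two maps are mutually inverse on generators rather than concluding via the cardinality comparison $|G|\leq|H|\leq|G|$, which makes the argument robust even when the groups are infinite.
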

\begin{proof}
The proof uses the same argument as in the proof of Proposition~\ref{presentation}: if $(G| \mathcal{S})$ is a presentation of $G$ (written with the generators $(\varphi_1,\ldots,\varphi_d)$ of $G$),   $(G | \mathcal{S}')$ with $\mathcal{S}'= \mathcal{S} \cup \mathcal{R}$ is also a presentation of $G$, which implies $|G| \leq |H|$. Again, we deduce from Corollary \ref{cor:imJH} that $G$ and $H$ are isomorphic. 
\end{proof}

\subsection{Dimension two}
Although our results do not bring any novelty in dimension two, we briefly recall what is known about the group $G$, which is a dihedral group, finite or infinite. In the unweighted case, if finite, $G$ can be of order $4$, $6$ or $8$, see \cite{BMMi-10}. If non-trivial weights are allowed, then the group can be of order $10$, see \cite[Sec.~7]{KaYa-15}, and it is believed that no higher order is possible.

On the other hand, the group $H$ can be any finite dihedral group. For example, consider any fixed value of $n\geq 3$ and define the transition probabilities $w(1,0)=w(-1,0)=\frac{\sin^2(\frac{\pi}{n})}{2}$ and $w(1, -1)=w(-1,1)=\frac{\cos^2(\frac{\pi}{n})}{2}$, then $H$ can be proved to be dihedral of order $2n$ using the computations in Section~\ref{subsec:illustration_2}; this example is inspired by the work \cite{Ra-11}.

\subsection{Dimension three}
We consider the three examples represented on Figure~\ref{fig:stepsets}.

	\begin{figure}
	\begin{center}
	\begin{tikzpicture}[scale=0.6]
			\draw[-stealth, line width=1pt, color=red] (1,1,1)--(0,0,2);
			\draw[-stealth, line width=1pt, color=red] (1,1,1)--(0,2,2);
			\draw[-stealth, line width=1pt, color=red] (1,1,1)--(0,0,1);
			\draw[-stealth, line width=1pt, color=red] (1,1,1)--(2,0,1);
			\draw[-stealth, line width=1pt, color=red] (1,1,1)--(1,2,1);
			\draw[-stealth, line width=1pt, color=red] (1,1,1)--(2,0,0);
			\draw[-stealth, line width=1pt, color=red] (1,1,1)--(2,2,0);
			
			\draw[opacity=0.3] (0,2,0)--(0,0,0)--(2,0,0);
			\draw (2,0,0)--(2,2,0)--(0,2,0);
			\draw (0,2,0)--(0,2,2);
			\draw (2,2,0)--(2,2,2);
			\draw (1,2,0)--(1,2,2);
			\draw (2,1,0)--(2,1,2);
			\draw (2,0,0)--(2,0,2);
			\draw[opacity=0.3] (0,1,0)--(0,1,2);
			\draw[opacity=0.3] (0,0,0)--(0,0,2);
			\draw[opacity=0.3] (1,0,0)--(1,0,2);
			\draw[opacity=0.3] (1,1,0)--(1,1,2);
			\draw[opacity=0.3] (1,0,1)--(1,2,1);
			\draw[opacity=0.3] (0,1,1)--(2,1,1);
			\draw[opacity=0.3] (1,0,0)--(1,2,0);
			\draw[opacity=0.3] (0,1,0)--(2,1,0);
			\draw (1,0,2)--(1,2,2);
			\draw (0,1,2)--(2,1,2);

				\draw[opacity=0.3] (0,2,1)--(0,0,1)--(2,0,1);
				\draw (2,0,1)--(2,2,1)--(0,2,1);

				\draw (0,0,2)--(2,0,2)--(2,2,2)--(0,2,2)--cycle;
		\end{tikzpicture}\qquad\qquad
		\begin{tikzpicture}[scale=0.6]
			\draw[-stealth, line width=1pt, color=red] (1,1,1)--(1,1,2);
			\draw[-stealth, line width=1pt, color=red] (1,1,1)--(1,2,2);
			\draw[-stealth, line width=1pt, color=red] (1,1,1)--(2,1,2);
			\draw[-stealth, line width=1pt, color=red] (1,1,1)--(1,2,1);
			\draw[-stealth, line width=1pt, color=red] (1,1,1)--(0,1,1);
			\draw[-stealth, line width=1pt, color=red] (1,1,1)--(2,0,1);
			\draw[-stealth, line width=1pt, color=red] (1,1,1)--(0,0,0);
			\draw[-stealth, line width=1pt, color=red] (1,1,1)--(0,2,0);
			\draw[-stealth, line width=1pt, color=red] (1,1,1)--(2,0,0);
			
			\draw[opacity=0.3] (0,2,0)--(0,0,0)--(2,0,0);
			\draw (2,0,0)--(2,2,0)--(0,2,0);
			\draw (0,2,0)--(0,2,2);
			\draw (2,2,0)--(2,2,2);
			\draw (1,2,0)--(1,2,2);
			\draw (2,1,0)--(2,1,2);
			\draw (2,0,0)--(2,0,2);
			\draw[opacity=0.3] (0,1,0)--(0,1,2);
			\draw[opacity=0.3] (0,0,0)--(0,0,2);
			\draw[opacity=0.3] (1,0,0)--(1,0,2);
			\draw[opacity=0.3] (1,1,0)--(1,1,2);
			\draw[opacity=0.3] (1,0,1)--(1,2,1);
			\draw[opacity=0.3] (0,1,1)--(2,1,1);
			\draw[opacity=0.3] (1,0,0)--(1,2,0);
			\draw[opacity=0.3] (0,1,0)--(2,1,0);
			\draw (1,0,2)--(1,2,2);
			\draw (0,1,2)--(2,1,2);

				\draw[opacity=0.3] (0,2,1)--(0,0,1)--(2,0,1);
				\draw (2,0,1)--(2,2,1)--(0,2,1);

				\draw (0,0,2)--(2,0,2)--(2,2,2)--(0,2,2)--cycle;
		\end{tikzpicture}\qquad\qquad
		\begin{tikzpicture}[scale=0.6]
			\draw[-stealth, line width=1pt, color=red] (1,1,1)--(1,2,2);
			\draw[-stealth, line width=1pt, color=red] (1,1,1)--(2,2,2);
			\draw[-stealth, line width=1pt, color=red] (1,1,1)--(0,0,2);
			\draw[-stealth, line width=1pt, color=red] (1,1,1)--(2,0,2);
			\draw[-stealth, line width=1pt, color=red] (1,1,1)--(0,2,1);
			\draw[-stealth, line width=1pt, color=red] (1,1,1)--(0,0,1);
			\draw[-stealth, line width=1pt, color=red] (1,1,1)--(0,1,0);
			\draw[-stealth, line width=1pt, color=red] (1,1,1)--(1,2,0);
			\draw[-stealth, line width=1pt, color=red] (1,1,1)--(2,1,0);
			\draw[-stealth, line width=1pt, color=red] (1,1,1)--(2,0,0);
			
			\draw[opacity=0.3] (0,2,0)--(0,0,0)--(2,0,0);
			\draw (2,0,0)--(2,2,0)--(0,2,0);
			\draw (0,2,0)--(0,2,2);
			\draw (2,2,0)--(2,2,2);
			\draw (1,2,0)--(1,2,2);
			\draw (2,1,0)--(2,1,2);
			\draw (2,0,0)--(2,0,2);
			\draw[opacity=0.3] (0,1,0)--(0,1,2);
			\draw[opacity=0.3] (0,0,0)--(0,0,2);
			\draw[opacity=0.3] (1,0,0)--(1,0,2);
			\draw[opacity=0.3] (1,1,0)--(1,1,2);
			\draw[opacity=0.3] (1,0,1)--(1,2,1);
			\draw[opacity=0.3] (0,1,1)--(2,1,1);
			\draw[opacity=0.3] (1,0,0)--(1,2,0);
			\draw[opacity=0.3] (0,1,0)--(2,1,0);
			\draw (1,0,2)--(1,2,2);
			\draw (0,1,2)--(2,1,2);

				\draw[opacity=0.3] (0,2,1)--(0,0,1)--(2,0,1);
				\draw (2,0,1)--(2,2,1)--(0,2,1);

				\draw (0,0,2)--(2,0,2)--(2,2,2)--(0,2,2)--cycle;
		\end{tikzpicture}
	\end{center}
	\caption{Left picture: the model considered in Example~\ref{ex:ex2}. Second picture: the model of Example~\ref{ex:ex3}. Right picture: the model presented on Example~\ref{ex:ex4}.}
	\label{fig:stepsets}
	\end{figure}
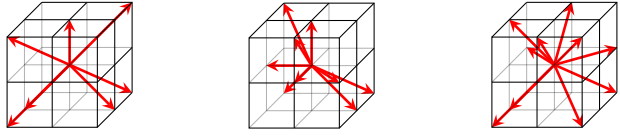

 \begin{ex}\normalfont
 \label{ex:ex2}
 Consider the following model
	$$\chi_{\mathcal S}(x,y,z)= \overline{x}\overline{y}\overline{z} + \overline{x}y\overline{z} + \overline{x}\overline{y} + y + x\overline{y} + x\overline{y}z + xyz,$$
as shown on Figure~\ref{fig:stepsets}.
To illustrate the objects we introduced in Section~\ref{sec:surjective}, we compute
$\boldsymbol{x_0}=(1,\frac{2}{\sqrt{3}},1)$ 
and
	\begin{equation*}
	    \Delta = \left(\begin{array}{ccc}
			1 & 0 & \frac{\sqrt{70}}{10}\\
			0 & 1 & 0\\
			\frac{\sqrt{70}}{10} & 0 & 1
		\end{array}\right) 
		= \left(\begin{array}{ccc}
			0 & 1 & 1\\
			1 & 0 & 0\\
			0 & 1 & -1
		\end{array}\right)
		\left(\begin{array}{ccc}
			1 & 0 & 0\\
			0 & 1+\frac{\sqrt{70}}{10} & 0\\
			0 & 0 & 1-\frac{\sqrt{70}}{10}
		\end{array}\right)
		\left(\begin{array}{ccc}
			0 & 1 & 0\\
			\frac{1}{2} & 0 & \frac{1}{2}\\
			\frac{1}{2} & 0 & -\frac{1}{2}
		\end{array}\right).
	\end{equation*}
Proposition \ref{prop:app} does not apply here since $H$ has a dihedral factor. 	 From the above diagonalization of $\Delta$, we can easily compute the domain $T$ in \eqref{eq:def_new_domain}.
Notice that 
	\begin{equation*}
		S_1 = J \varphi_1=\left(\begin{array}{ccc}
			-1 & 0 & -\frac{7}{5}\\
			0 & 1 & 0\\
			0 & 0 & 1
		\end{array}\right),\quad
		S_2= J \varphi_2=\left(\begin{array}{ccc}
			1 & 0 & 0\\
			0 & -1 & 0\\
			0 & 0 & 1
		\end{array}\right),\quad
		S_3 = J \varphi_3=\left(\begin{array}{ccc}
			1 & 0 & 0\\
			0 & 1 & 0\\
			-2 & 0 & -1
		\end{array}\right),
	\end{equation*}
so that we can compute 
$(S_1S_2)^2=(S_2S_3)^2=\Id_3$, while 
$$S_1S_3 = \left(\begin{array}{ccc}
			\frac{9}{5} & 0 & \frac{7}{5}\\
			0 & 1 & 0\\
			-2 & 0 & -1
		\end{array}\right)$$ 
		has eigenvalues
$1$ and $\frac{2\pm i \sqrt{21}}{5}$. 
The cosine associated with the second eigenvalue is $\frac{2}{5}$ and therefore rational. According to Niven's theorem \cite{Niv-33}, the second eigenvalue is of infinite order.  The fixed point method shown earlier \cite{DuHoWa-16} allows us to conclude that $G$ is infinite, while by Corollary~\ref{cor:imJH} we also conclude that $H$ is infinite (since $\Ima J$ is). 
\end{ex}

\begin{ex}\normalfont
\label{ex:ex3}
Consider the model
\begin{equation*}
    \chi_\mathcal S (x,y,z) = \overline{x} \overline{y} z + \overline{x} + \overline{x}yz + \overline{z} + \overline{z}y + y + x\overline{y} + x\overline{y}z + x\overline{z},
\end{equation*}
as shown in the left display on Figure~\ref{fig:stepsets}. One easily obtains $\boldsymbol{x_0}=(1,1,1)$, because the drift of the model is zero, and the covariance matrix is equal to
\begin{equation*}
    \Delta = \left(\begin{array}{rrr}
			1 & -\frac{1}{3} & -\frac{1}{3}\\
			-\frac{1}{3} & 1 & -\frac{1}{3}\\
			-\frac{1}{3} & -\frac{1}{3} & 1
		\end{array}\right).
\end{equation*}
Proposition~\ref{prop:app} immediately implies that the group $G$ is infinite. Note that this model belongs to the set $G_1$ in \cite[Tab.~1]{KaWa-17}, which means that there is no non-trivial relation between the generators $\varphi_1,\varphi_2,\varphi_3$ in \eqref{eq:group_G_def}.
\end{ex}

\begin{ex}\normalfont
\label{ex:ex4}
We now look at the model on the second display in Figure~\ref{fig:stepsets}
\begin{equation*}
    \chi_\mathcal S(x,y,z) = \overline{x}\overline{y}\overline{z}+\overline{x}\overline{y}+\overline{x}z+\overline{x}{y}+{y}\overline{z}+y z+{x}\overline{y}\overline{z}+{x}\overline{y}z+xz+{xy}\overline{z},
\end{equation*}
which also has zero drift, hence $\boldsymbol{x_0}=(1,1,1)$, and whose covariance matrix is the identity, as already observed in 
\cite[Sec.~5.4]{BoPeRaTr-20}. Accordingly, the reflection group $H$ is finite and isomorphic to $\bigl(\frac{\Z}{2\Z}\bigr)^3$. However, this model is known to admit an infinite group by \cite{DuHoWa-16}.
\end{ex}

\part{Eigenvalues of polyhedral nodal domains}
\label{part:2}

In the first part we saw that the asymptotics of the number of excursions $e_C(P,Q;n)$ in the orthant $C=\mathbb R_+^d$ is strongly related to the principal eigenvalue $\lambda_1$ of a Dirichlet problem on the polyhedral domain $T$ given by \eqref{eq:def_T}, see \eqref{eq:DW_exponent}. It is therefore natural to ask which are the polyhedral domains for which it is possible to compute this eigenvalue in closed form. Let us recall some facts in dimensions two and three. 

Dimension two is special and allows a systematic computation of $\lambda_1$, just by explicitly solving the eigenvalue problem. See e.g.\ \cite[Ex.~2]{DeWa-15} and \cite[Sec.~2.3]{BoRaSa-14} for the implementation of these calculations.

Dimension three is more complicated, and in general (i.e.\ for generic parameters)\ it is not possible to compute the eigenvalue in closed form (just as it is not possible to compute the first eigenvalue of a generic triangle in the plane $\mathbb R^2$ for the Dirichlet Laplacian). However, a list of polyhedral domains for which $\lambda_1$ (and actually the whole spectrum)\ can be computed can be found in \cite{BeBe-80,Be-83} by Bérard and Besson. The paper \cite{BoPeRaTr-20} explores the connection between walks in the three-dimensional orthant and these particular polyhedral domains, and proposes several examples of models corresponding to the polyhedral domains found in \cite{BeBe-80,Be-83}.

In Part~\ref{part:2} we extend the results of Bérard and Besson to arbitrary dimensions. We compute the Dirichlet eigenvalue $\la_1$ of polyhedral domains which are also nodal domains of ${\mathbb{S}}^{d-1}$, and we classify all such domains. We show that they must be the intersection of a chamber of a finite Coxeter group with $\mathbb S^{d-1}$. 

Part~\ref{part:2} consists of three sections. First, the main Theorem~\ref{pndomains} is stated and proved in Section~\ref{sec:polyhedral}. In Section~\ref{classification} we apply Theorem~\ref{pndomains} to the case of small dimensions two, three and four and completely classify the polyhedral nodal domains. In this way we recover the existing results of Bérard and Besson \cite{BeBe-80,Be-83} in dimension three and of Choe and Soret \cite{ChSo-09} in dimension four.
Finally, in Section~\ref{sec:three_ex} we give three examples that connect the first and second parts of our work. We consider some models of walks in arbitrary dimension and show how to explicitly compute their first eigenvalue $\lambda_1$ and asymptotic exponent $\alpha$ in \eqref{eq:DW_exponent}.

\section{Polyhedral nodal domains and their principal eigenvalues} 
\label{sec:polyhedral}

Let $d \geq 2$ and $({\mathbb{S}}^{d-1},\si_{d-1})$ be the $(d-1)$-dimensional sphere
\begin{equation*}
    {\mathbb{S}}^{d-1}= \{(x_1,\ldots,x_d) \in \R^d :  x_1^2+\cdots +x_d^2= 1\},
\end{equation*}
equipped with its natural Riemannian metric $\si_{d-1}$ obtained as the restriction of the Euclidean metric of $\R^d$. 
\begin{defn}
Let $U \subset \mathbb{S}^{d-1}$ be an open set. We say that $U$ is a polyhedral domain  if 
 \begin{equation}
     \label{eq:def_polyhedral_domain}
     U ={{\mathbb{S}}}^{d-1} \cap H_1^+ \cap \cdots \cap H_r^+,
\end{equation}
where for all $i$, $H_i^+$ is a half-space of $\R^d$ whose boundary is a linear hyperplane $H_i$. 
If $U$ is a polyhedral domain, the number $r$, which is assumed to be minimal in \eqref{eq:def_polyhedral_domain}, is the number of sides of~$U$.
\end{defn}

See Figure~\ref{fig:action_Delta} for examples of polyhedral domains in two and three dimensions. The domain $T$ in \eqref{eq:def_polyhedral_domain} as well as the orthant $\mathbb R_+^d$ are other examples of polyhedral domains. 

We denote by $\Delta_d$ the Laplacian on $\R^d$ and by $\D$ the Laplace-Beltrami operator on ${\mathbb{S}}^{d-1}$, see \cite[Sec.~3.2.3]{La-15}. 
We are interested in  polyhedral domains which are also nodal domains of ${\mathbb{S}}^{d-1}$, i.e., for which there exists 
$\phi \in \mathcal C^{\infty}({\mathbb{S}}^{d-1})$ such that $\phi$ is an eigenfunction of $-\D$ which satisfies $\phi>0$ on 
$U$ and 
$\phi=0$ on $\partial U= \cup_{i=1}^r (H_i \cap \overline{U})$. Note that, by \cite[Prop.~4.5.8]{La-15}, since $\phi$ has a constant sign on $U$, $\phi$ is then an eigenfunction associated to the 
first  eigenvalue $\la_1(U)$  for the Dirichlet problem on $(U,\si_{d-1})$. 
In the following, if $H$ is a hyperplane of $\R^d$, we denote by $s_H$ the orthogonal reflection with respect to $H$. We  prove: 

\begin{thm}
\label{pndomains}
Let  $U ={\mathbb{S}}^{d-1} \cap H_1^+ \cap \cdots \cap H_r^+$ be a polyhedral domain as in \eqref{eq:def_polyhedral_domain}. Then 
 $U$ is nodal if and only if there exists a finite set  $\Lambda$ of hyperplanes such that 
\begin{itemize}
 \item $U$ is a connected component of 
$ {\mathbb{S}}^{d-1} \cap \bigl( \cup_{H \in \Lambda} H \bigr)$; 
\item the Coxeter group $W:= \langle s_{H} | H \in \Lambda  \rangle \subset O(d)$ is finite and  acts on $\Lambda$.
\end{itemize}
Moreover, if $U$ satisfies the conditions above, let $k:=\sharp \Lambda$. Then the first eigenvalue of $U$ for the Dirichlet problem  is
\begin{equation*}
    \la_1(U)=k(d-2+k).
\end{equation*}
\end{thm}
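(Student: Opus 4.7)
The plan is to treat the two implications separately, in both cases using the passage between Dirichlet eigenfunctions on $U$ and homogeneous harmonic polynomials via spherical harmonics, together with the invariant theory of finite reflection groups.

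For the direct implication, suppose $\Lambda$ and $W$ are given with $|\Lambda|=k$. For each $H\in\Lambda$, pick a unit linear form $\ell_H$ vanishing on $H$, and set $J(x)=\prod_{H\in\Lambda}\ell_H(x)$. Because $W$ acts on $\Lambda$ and each generator $s_H$ sends $\ell_H$ to $-\ell_H$ while merely permuting the other $\ell_{H'}$ up to sign, the polynomial $J$ transforms under $W$ through the sign character. Since $\Delta_d$ is $O(d)$-equivariant, $\Delta_d J$ is again $W$-anti-invariant, of degree $k-2$. I would then invoke the classical fact that for a finite reflection group every non-zero anti-invariant is divisible by $J$, so in particular anti-invariants of degree strictly smaller than $k=\sharp\Lambda$ must vanish; hence $\Delta_d J=0$. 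Thus $J|_{{\mathbb{S}}^{d-1}}$ is a spherical harmonic with eigenvalue $k(k+d-2)$ whose sign is constant on each chamber of the arrangement, which shows that every such chamber is a nodal domain with the announced first eigenvalue.

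For the converse, let $\phi$ be the eigenfunction given in the definition of a nodal domain. Since $\phi$ has constant sign on $U$, it is, up to a scalar, the first Dirichlet eigenfunction of $U$, with eigenvalue of the form $\la_1=k(k+d-2)$ for some $k\in\N$. Extend $\phi$ to the unique homogeneous harmonic polynomial $P$ of degree $k$ on $\R^d$ restricting to $\phi$ on the sphere. Each wall $H_i$ satisfies $P\equiv 0$ on the relatively open subset $H_i\cap\overline U$, so by real analyticity $P$ vanishes on all of $H_i$. The reflection principle for eigenfunctions vanishing on a hyperplane, combined with uniqueness of analytic continuation, then forces $P\circ s_{H_i}=-P$; iterating, the group $W_0:=\langle s_{H_1},\dots,s_{H_r}\rangle$ acts on $P$ through a sign character that is well defined precisely because $P\neq 0$.

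The crucial step is to show that $W_0$ is finite. Let $\Lambda':=\{H \text{ hyperplane}:\ell_H\mid P\}$, a set of cardinality at most $\deg P=k$. The anti-invariance forces $\Lambda'$ to be stable under $W_0$, and because the cone over $U$ is pointed, $\bigcap_{H\in\Lambda'}H\subseteq\bigcap_i H_i=\{0\}$. Now any $g\in O(d)$ preserving $\Lambda'$ as a set permutes the $2|\Lambda'|$ signed unit normals to its elements, and an element of the kernel of this permutation action is determined by its sign on each of these normals; since the normals span $\R^d$, there are only finitely many such $g$. Consequently the stabiliser of $\Lambda'$ in $O(d)$ is finite, $W_0$ is contained in it, and being generated by reflections $W_0$ is therefore a finite Coxeter group. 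Taking $\Lambda$ to be the full set of reflection hyperplanes of $W_0$ (the $W_0$-orbit of $\{H_1,\dots,H_r\}$) and $W=W_0$ yields the required structure, with $U$ a chamber of the arrangement $\Lambda$.

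To read off the eigenvalue, apply the already-proved direct implication to this $(\Lambda,W)$: the discriminant $J=\prod_{H\in\Lambda}\ell_H$ is a harmonic polynomial of degree $|\Lambda|$ with constant sign on $U$, so $J|_{{\mathbb{S}}^{d-1}}$ is a first Dirichlet eigenfunction with eigenvalue $|\Lambda|(|\Lambda|+d-2)$. Since first eigenfunctions on the connected domain $U$ are unique up to scalar, $P$ is proportional to $J$, hence $k=|\Lambda|$ and $\la_1(U)=k(k+d-2)$. The main obstacle in this plan is the finiteness of $W_0$: one has to convert the algebraic information that the harmonic polynomial $P$ has only finitely many hyperplane factors into geometric rigidity of a subgroup of $O(d)$, and the properness of the cone over $U$, which yields $\bigcap H=\{0\}$, is precisely what makes the stabiliser argument close.
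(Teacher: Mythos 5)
Your overall strategy coincides with the paper's: for sufficiency you take the ``discriminant'' $J=\prod_{H\in\Lambda}\ell_H$, observe it is anti-invariant, deduce $\Delta_d J$ is an anti-invariant of degree $k-2$ hence zero (the paper proves the divisibility statement you quote as classical, as its Lemma~\ref{P0}), and conclude via Theorem~\ref{eig_sphere}; for necessity you extend $\phi$ to a homogeneous harmonic polynomial, propagate its vanishing to whole hyperplanes by analyticity, use the reflection/gluing principle to get anti-invariance, and then prove finiteness of the generated reflection group by letting it act on the finite set of normals. The final identification $\lambda_1(U)=k(d-2+k)$ via uniqueness of the positive first eigenfunction is also sound.

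The one step that fails as written is in your finiteness argument: the claim that ``the cone over $U$ is pointed, hence $\bigcap_{H\in\Lambda'}H\subseteq\bigcap_i H_i=\{0\}$'', which you use to make the normals span $\R^d$ and hence to kill the kernel of the permutation action. This is false in general: the theorem covers non-essential situations such as a hemisphere ($r=1$) or a lune in $\mathbb S^{d-1}$ cut out by $k$ hyperplanes all containing a common $(d-2)$-dimensional subspace (a chamber of a dihedral group acting on a $2$-plane of $\R^d$); there the cone over $U$ is a wedge, not pointed, the normals do not span, and the full stabiliser of $\Lambda'$ in $O(d)$ is infinite. The repair is short and is exactly what the paper supplies: every generating reflection $s_H$, $H\in\Lambda'$, fixes $\bigl(\operatorname{span}\{e_H\}\bigr)^{\perp}=\bigcap_{H\in\Lambda'}H$ pointwise, so every element of $W_0$ does; an element of the kernel of the action on the normals therefore acts as the identity on both $\operatorname{span}\{e_H\}$ and its orthogonal complement, hence is the identity, and $W_0$ embeds in a finite symmetric group. (The paper reaches the same conclusion by an iterated-stabiliser induction on the finite set $\Gamma$ of signed normals, explicitly splitting $\R^d=V\oplus V^{\perp}$ with $V=\operatorname{span}\Gamma$; your homomorphism-to-$\operatorname{Sym}(\Gamma)$ formulation is cleaner once this correction is made.) With that patch your argument is complete and essentially identical to the paper's.
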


We recall that finite Coxeter groups are reflection groups (see Appendix~\ref{sec:coxeter}).  These groups have been extensively studied and are all classified.  This allows us to classify all polyhedral nodal domains and compute their associated first eigenvalue. We give the complete list for $d =2, 3,4$ in Section~\ref{classification}, i.e., we give the complete list of polyhedral nodal domains of ${\mathbb{S}}^1$, ${\mathbb{S}}^2$ and ${\mathbb{S}}^3$.

\subsection{Preliminary results} 
Before proving Theorem~\ref{pndomains}, we need some preliminary results. We first recall a well-known result  (see for instance \cite[Sec.~5.1.3]{La-15}).
\begin{thm} \label{eig_sphere} 
Assume that $\phi \in \mathcal C^{\infty}({\mathbb{S}}^{d-1})$, $\phi \not\equiv 0$ satisfies 
$-\D\phi = \la \phi$. Then $\phi$ is the restriction to ${\mathbb{S}}^{d-1}$ of a homogeneous harmonic polynomial $P$ of $\R^d$. Moreover, if $k$ is the degree of $P$, then $\la = k(d-2+k)$.
\end{thm}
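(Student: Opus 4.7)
The plan is to relate the Laplace--Beltrami operator on $\mathbb S^{d-1}$ to the Euclidean Laplacian on $\R^d$ via polar coordinates, use this to verify directly that restrictions of homogeneous harmonic polynomials are eigenfunctions with the advertised eigenvalues, and then invoke a density argument to show these exhaust every eigenfunction.

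First I would recall the classical polar decomposition of $\Delta_d$: writing $x = r\omega$ with $r = |x| > 0$ and $\omega \in \mathbb S^{d-1}$, one has for any $f \in \mathcal C^2(\R^d\setminus\{0\})$
\begin{equation*}
\Delta_d f = \partial_r^2 f + \frac{d-1}{r}\, \partial_r f + \frac{1}{r^2}\, \Delta_{\si_{d-1}} f,
\end{equation*}
where on the right-hand side $\Delta_{\si_{d-1}}$ acts on the angular variable. Applying this to a homogeneous polynomial $P$ of degree $k$, so that $P(r\omega) = r^k P(\omega)$, gives
\begin{equation*}
\Delta_d P(r\omega) = r^{k-2}\bigl( k(k-1) + (d-1)k \bigr) P(\omega) + r^{k-2}\, \Delta_{\si_{d-1}} P(\omega).
\end{equation*}
If in addition $P$ is harmonic, the left-hand side vanishes, yielding $\Delta_{\si_{d-1}} P|_{\mathbb S^{d-1}} = -k(d-2+k)\, P|_{\mathbb S^{d-1}}$. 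Hence the space $\mathcal H_k$ of restrictions to $\mathbb S^{d-1}$ of homogeneous harmonic polynomials of degree $k$ consists of eigenfunctions of $-\Delta_{\si_{d-1}}$ with eigenvalue $k(d-2+k)$, and these eigenvalues are pairwise distinct for different $k\in \N$.

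To finish, I would show that every smooth eigenfunction lies in some $\mathcal H_k$. The key algebraic ingredient is the classical decomposition $\R[x_1,\ldots,x_d]_k = \widetilde{\mathcal H}_k \oplus |x|^2 \R[x_1,\ldots,x_d]_{k-2}$ of homogeneous polynomials of degree $k$ into a harmonic part $\widetilde{\mathcal H}_k$ plus a multiple of $|x|^2$; restricting to $\mathbb S^{d-1}$ and iterating shows that every polynomial restricts to a finite sum of elements of the $\mathcal H_k$. By Stone--Weierstrass, polynomial restrictions are dense in $\mathcal C^0(\mathbb S^{d-1})$, hence in $L^2$. Now the spectral theorem applied to the (essentially self-adjoint, elliptic, non-negative) operator $-\Delta_{\si_{d-1}}$ gives an $L^2$-orthogonal eigendecomposition; combined with the density just obtained and the fact that each $\mathcal H_k$ sits inside a single eigenspace, one concludes that $\bigoplus_k \mathcal H_k$ is the full span of eigenfunctions, that the spectrum is exactly $\{k(d-2+k) : k \in \N\}$, and that each eigenspace coincides with the corresponding $\mathcal H_k$. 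Thus any $\phi$ as in the statement is the restriction of some homogeneous harmonic polynomial $P$ of degree $k$, with $\lambda = k(d-2+k)$.

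The main obstacle is not any single computation but rather the bookkeeping of the density/orthogonality step: one needs the eigenspaces of $-\Delta_{\si_{d-1}}$ to be finite-dimensional and for the algebraic direct sum $\bigoplus_k \mathcal H_k$ to exhaust them, which relies on elliptic regularity on the compact manifold $\mathbb S^{d-1}$ and on the harmonic decomposition of polynomials. Since this is a standard result in the theory of spherical harmonics, I would cite a textbook such as \cite{La-15} rather than reproducing the self-adjointness and spectral-theorem arguments in full.
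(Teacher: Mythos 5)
The paper does not prove this statement at all: it recalls it as a classical fact and simply cites \cite[Sec.~5.1.3]{La-15}, so there is no in-paper argument to compare against. Your proposal is the standard spherical-harmonics proof (polar decomposition of $\Delta_d$, the splitting $\R[x_1,\ldots,x_d]_k = \widetilde{\mathcal H}_k \oplus |x|^2\,\R[x_1,\ldots,x_d]_{k-2}$, Stone--Weierstrass, and the spectral theorem on the compact manifold $\mathbb S^{d-1}$) and is correct; the only nitpick is that $k$ should range over $\{0,1,2,\dots\}$ (the paper's $\N$ starts at $1$) so that the constant eigenfunctions with $\la=0$ are included.
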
 

We need the following lemma (the proof of which follows directly from the analyticity of $P_{\vert H}$):
\begin{lem} \label{cancellation}
Let $P$ be a polynomial on $\R^d$, $H \subset \R^d$ a hyperplane and $V$ an open set having a non-empty intersection with $H$. Assume that $P\equiv 0$ on $H\cap V$. Then $P \equiv 0$ on $H$.  
\end{lem}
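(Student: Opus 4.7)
The plan is to reduce the statement to the classical fact that a polynomial on $\R^{d-1}$ which vanishes on a non-empty open subset of $\R^{d-1}$ must be identically zero. First I would pick an affine parametrization $\psi : \R^{d-1} \to H$, i.e., choose a point $x_0 \in H$ and an orthonormal basis $(f_1, \ldots, f_{d-1})$ of the linear direction of $H$, and set $\psi(t_1, \ldots, t_{d-1}) = x_0 + t_1 f_1 + \cdots + t_{d-1} f_{d-1}$. Then $Q := P \circ \psi$ is a polynomial in $d-1$ variables, and the open set $W := \psi^{-1}(H \cap V)$ is a non-empty open subset of $\R^{d-1}$ on which $Q$ vanishes. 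Once I show $Q \equiv 0$ on $\R^{d-1}$, it follows that $P \equiv 0$ on $H = \psi(\R^{d-1})$.

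The only remaining point is thus the following classical lemma: a polynomial $Q \in \R[t_1, \ldots, t_n]$ which vanishes on a non-empty open subset $W \subset \R^n$ is the zero polynomial. I would prove this by induction on $n$. For $n=1$, a non-zero polynomial has only finitely many roots, so it cannot vanish on a non-empty open set. For the inductive step, fix a point $(a_1, \ldots, a_n) \in W$ and choose $\varepsilon > 0$ such that the open box $\prod_{i=1}^n (a_i - \varepsilon, a_i + \varepsilon)$ is contained in $W$. Write
\begin{equation*}
Q(t_1, \ldots, t_n) = \sum_{k \geq 0} Q_k(t_1, \ldots, t_{n-1}) \, t_n^k,
\end{equation*}
with only finitely many non-zero $Q_k \in \R[t_1, \ldots, t_{n-1}]$. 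For each fixed $(t_1, \ldots, t_{n-1}) \in \prod_{i=1}^{n-1} (a_i - \varepsilon, a_i + \varepsilon)$, the one-variable polynomial $t_n \mapsto Q(t_1, \ldots, t_n)$ vanishes on $(a_n - \varepsilon, a_n + \varepsilon)$ and is therefore identically zero, so every coefficient $Q_k(t_1, \ldots, t_{n-1})$ vanishes on the open box in $\R^{n-1}$. By the inductive hypothesis, $Q_k \equiv 0$ for every $k$, and hence $Q \equiv 0$.

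There is no real obstacle here; the argument is elementary. The only thing to be a bit careful about is the phrasing of what it means for a polynomial on $\R^d$ to ``vanish on $H$'': I would state clearly that $H$ is an affine hyperplane (a linear hyperplane in the excerpt, but the argument is identical), that the restriction is well-defined as a polynomial via any affine parametrization, and that this restriction is independent of the parametrization up to an invertible affine change of variables, which preserves the property of being the zero polynomial. Once these trivial reductions are made, the induction above finishes the proof.
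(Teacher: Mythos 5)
Your proof is correct. The paper itself dispatches this lemma in one parenthetical line, invoking the identity theorem for real-analytic functions: $P|_H$ is analytic on the connected set $H$, so vanishing on a non-empty open subset forces it to vanish identically. You instead stay entirely within polynomial algebra: after the same reduction (affine parametrization of $H$, giving a polynomial $Q$ in $d-1$ variables vanishing on a non-empty open set), you prove the needed vanishing statement by induction on the number of variables, using only the fact that a non-zero one-variable polynomial has finitely many roots. Both routes are standard; yours is more elementary and self-contained (no appeal to analyticity), at the cost of a few more lines, while the paper's is shorter but relies on a theorem from analysis. Your care about the restriction being well-defined up to an invertible affine change of variables is appropriate, and nothing in your argument breaks; in the paper's setting $H$ is a linear hyperplane, so you may even take $x_0=0$, but as you note the affine case is identical.
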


Let $\Lambda$ be a finite set of hyperplanes  such that the Coxeter group $W:= \langle s_{H} | H \in \Lambda \rangle \subset O(d)$ is finite and  acts on $\Lambda$. For $g\in W$, denote $g\cdot P =P \circ g$.
 
\begin{defn} 
A polynomial $P$ on $\R^d$ is {\em antisymmetric} if for all $g \in W$, $g\cdot P = \det(g) P$. 
\end{defn}

For all $H \in \Lambda$, choose a unit vector $e_H$ orthogonal to $H$ and define $f_H=\langle e_H,\cdot\rangle$ so that $f_H$ is a linear form whose kernel is $H$. Define 
\begin{equation}
    \label{eq:def_P0}
    P_0:= \prod_{H \in \Lambda} f_H.
\end{equation}
Then, we will need the following  lemma which can also be found  in  \cite[Prop.~3]{BeBe-80}. 

\begin{lem}
\label{P0}
The polynomial $P_0$ in \eqref{eq:def_P0} is antisymmetric. Moreover, if $P$ is any antisymmetric polynomial, then $P_0$ divides $P$. 
\end{lem}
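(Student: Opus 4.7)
The plan is to establish antisymmetry first by reducing to the generating reflections, and then to deduce the divisibility claim from the vanishing of any antisymmetric polynomial on each hyperplane $H \in \Lambda$.

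For antisymmetry, since $W$ is generated by $\{s_H : H \in \Lambda\}$ and the character $g \mapsto \det g$ is multiplicative on $W$, it suffices to verify $s_H \cdot P_0 = -P_0$ for each generator. Fix $H \in \Lambda$. Since $s_H(e_H) = -e_H$, the factor $f_H$ transforms as $f_H \circ s_H = -f_H$. It remains to check that the residual product $Q := \prod_{H' \in \Lambda \setminus \{H\}} f_{H'}$ is invariant under $s_H$. Writing $f_{H'} \circ s_H = \epsilon_{H'} f_{s_H(H')}$ with $\epsilon_{H'} \in \{\pm 1\}$ read off from $s_H(e_{H'}) = \epsilon_{H'} e_{s_H(H')}$, I would decompose $\Lambda \setminus \{H\}$ into orbits of the involution $s_H$. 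On a two-element orbit $\{H', H''\}$, applying $s_H$ twice to $f_{H'}$ yields $\epsilon_{H'} \epsilon_{H''} = +1$, so the orbit contributes $+1$. On a one-element orbit $\{H'\}$ with $H' \neq H$, the reflection $s_H$ stabilises $H'$ and hence its normal line, so $s_H(e_{H'}) = \pm e_{H'}$; the minus case would force $e_{H'} \in H^\perp$, hence $H' = H$, a contradiction, so $\epsilon_{H'} = +1$. Thus every orbit contributes $+1$, $Q$ is $s_H$-invariant, and $s_H \cdot P_0 = -P_0 = \det(s_H)\, P_0$, yielding antisymmetry.

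For the divisibility, let $P$ be any antisymmetric polynomial and fix $H \in \Lambda$. The identity $P \circ s_H = -P$, restricted to the fixed set of $s_H$ (which is precisely $H$), gives $P|_H \equiv 0$, so the irreducible linear form $f_H$ divides $P$ in $\R[x_1, \ldots, x_d]$. Since distinct hyperplanes correspond to pairwise non-proportional linear forms, the family $(f_H)_{H \in \Lambda}$ consists of pairwise coprime irreducibles, and unique factorisation in $\R[x_1,\ldots,x_d]$ then yields $P_0 = \prod_{H \in \Lambda} f_H \mid P$. The only genuinely subtle step is the sign bookkeeping in the $s_H$-fixed orbits of $\Lambda \setminus \{H\}$ during the first part; the rest is a direct use of $W \subset O(d)$ and of polynomial unique factorisation.
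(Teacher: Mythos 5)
Your proof is correct and follows essentially the same route as the paper's: for divisibility, antisymmetry forces $P$ to vanish on each $H\in\Lambda$ so that each irreducible form $f_H$ divides $P$, and pairwise coprimality plus unique factorisation finishes; for antisymmetry, one reduces to the generators $s_H$ and checks that the residual product over $\Lambda\setminus\{H\}$ is $s_H$-invariant by splitting into $s_H$-fixed hyperplanes and swapped pairs (your orbit decomposition is exactly the paper's partition $(\Lambda_0,\Lambda',s_H(\Lambda'))$, with the sign bookkeeping $\epsilon_{H'}\epsilon_{H''}=1$ made slightly more explicit than in the paper).
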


\begin{proof}
We start by proving the second statement. Take an orthonormal basis $(f_1,\ldots,f_d)$ such that $f_1 =e_H$ and $f_i \in H$ for all $i \geq 2$. Consider the associated coordinates $(y_1, \ldots, y_d)$ so that $f_H= y_1$. Then $P$ can be written as 
\begin{equation*}
    P =\sum_{i=0}^k y_1^i Q_i,
\end{equation*}
where for all $i$, $Q_i$ is a polynomial on $\R^d$ which does not depend on $y_1$. The antisymmetry property of $P$  means that 
$P(-y_1,y_2, \ldots,y_d)= - P(y_1,y_2, \ldots,y_d)$, which implies that $Q_0=0$ and thus $f_H = y_1$ divides 
$P$. Since for all $H \in \Lambda$, $f_H$ is irreducible and divides $P$,  and since the ring of polynomials on $\R^d$ is factorial, this means that $P_0$ divides $P$. 

Let us now prove that $P_0$ is antisymmetric. Let $H \in \Lambda$ and consider the set 
$$\Lambda_0:= \bigl\{H' \in \Lambda\setminus\{H\} |s_H(H')=H'\bigr\},$$ so that 
$H' \in \Lambda_0$  if and only if $H' \perp H$.
Define also $\Lambda'$ as a set of representatives of the orbits for the action of the group $\{\Id, s_H\}$ on $\Lambda \setminus \Lambda_0$. Then  
\begin{equation*}
    \bigl(\Lambda_0, \Lambda', s_H (\Lambda')\bigr)
\end{equation*}
is a partition of $\Lambda$. Let us write \eqref{eq:def_P0} under the form
\begin{equation*}
    P_0 = f_H\prod_{H' \in \Lambda_0} f_{H'}  \prod_{H' \in \Lambda'} \al_{H'},
\end{equation*}
where $\al_{H'} = f_{H'}f_{s_H(H')}$.
Then, note that 
\begin{itemize} 
 \item $s_H \cdot f_H= - f_H$,
 \item $s_H \cdot f_{H'} = f_{H'}$ if $H' \in \Lambda_0$,
 \item $s_H \cdot \al_{H'} = \al_{H'}$ if $H' \in \Lambda'$,
\end{itemize}
which implies that $s_H \cdot P_0=-P_0$ and thus $P_0$ is antisymmetric. 
\end{proof}

\subsection{Proof of Theorem~\ref{pndomains}}

Let $U$ be a polyhedral domain as in \eqref{eq:def_polyhedral_domain}. Assume that there exists a finite set $\Lambda$ of hyperplanes such that 
\begin{itemize}
   \item $U$ is a connected component of 
$ {\mathbb{S}}^{d-1} \cap \bigl( \cup_{H \in \Lambda} H \bigr)$; 
   \item the Coxeter group $W:= \left\langle s_{H} | H \in \Lambda  \right\rangle \subset O(d)$ is finite and acts on 
$\Lambda$.
\end{itemize}
Define  $P_0$ as in \eqref{eq:def_P0}. Since the Laplacian operator $\Delta_d$ commutes with the action of $W$, then for all $g \in W$, $g \cdot \Delta_d P_0= \Delta_d (g \cdot P_0)$. By Lemma~\ref{P0}, $P_0$ is antisymmetric and  we obtain that $\Delta_d P_0$ is also antisymmetric. Again by Lemma~\ref{P0}, $P_0$ must divide $\Delta_d P_0$ and thus must be $0$ since  $\deg(\Delta_d P_0) < P_0$. Hence, $P_0$ is a homogeneous harmonic polynomial and, by Theorem~\ref{eig_sphere}, its restriction to the sphere is an eigenfunction for the eigenvalue $k (d-2+k)$, where $k =\deg(P_0)= \sharp \Lambda$. Since $P_0$ vanishes only on $\cup_{H \in \Lambda} H$, it does not vanish on $U$ and thus $U$ is a nodal domain. 

\medskip

Conversely, assume that $U$ is a nodal domain as in \eqref{eq:def_polyhedral_domain}. Then there exists an eigenfunction $\phi$ of $\D$ on ${\mathbb{S}}^{d-1}$ which is positive on $U$ and vanishes on $\partial U$. By Theorem~\ref{eig_sphere}, $\phi$ is the restriction to ${\mathbb{S}}^{d-1}$ of a homogeneous harmonic polynomial $P$. By assumption, for all $i$, $P$ vanishes on $H_i \cap \overline{U}$ and since  $P$ is homogeneous,  vanishes on the cone 
\begin{equation*}
    \{ t (H_i \cap \overline{U} ) \vert t \in \R\},
\end{equation*}
whose interior in $H_i$ is not empty. By Lemma~\ref{cancellation}, $P$ vanishes on $H_i$. 

Let now $\Lambda$ be the set of hyperplanes on which $P$ is identically $0$ ($\Lambda$ contains the $H_i$ and possibly other hyperplanes). We first observe that $\Lambda$ is finite. Otherwise $P$ would vanish on an infinite number of hyperplanes and by analyticity, would vanish everywhere.  Let $W:= \langle s_H |H \in \Lambda\rangle$.
We prove that 
\begin{itemize}
 \item  {\em $W$ acts on $\Lambda$.} Given $H \in \Lambda$, consider the two open half-spaces $H^+$ and $H^-$ delimited by $H$. Define 
\begin{equation*}
    g : \left| \begin{array}{ccc} 
P +s_H \cdot P & \hbox{ on } & \overline{H^+}, \\
0 & \hbox{ on } & H^-. \end{array} \right. 
\end{equation*}
On $H$, $\partial_{e_H} (P +s_H\cdot P )=0$. Since $P +s_H \cdot P$ vanishes on $H$ and is harmonic, $\partial_{e_H}^2  (P +s_H\cdot P )=0$, which implies that $g$ is $\mathcal C^2$, harmonic and vanishes on a half-space; it must vanish everywhere. We deduce that $P + s_H \cdot P=0$. Since $H \in \Lambda$ is arbitrary and since the $\{s_H|H \in \Lambda\}$ is a system of generators of $W$, we get that for all $g \in W$, $g \cdot P=\det(g) P$, which means that $P$ is antisymmetric with respect to $W$. In particular, if $P$ vanishes everywhere on $H \in \Lambda$, it vanishes also everywhere on $g \cdot H$ with, by definition of $\Lambda$, implies that $g \cdot H \in \Lambda$. This proves that $W$ acts on $\Lambda$. 
\item {\em $W$ is finite.}  Assume $W$ is infinite and set $\Gamma= \cup_{H \in \Lambda} \{e_H,-e_H\}$. Then $W$ acts also on $\Gamma$. Since $W$ is infinite, there exists $e \in \Gamma$ such that the stabilizer 
\begin{equation*}
   W_e =W_{-e}=\{g \in W |g \cdot \pm e= \pm e\}    
\end{equation*}
is infinite.  Then $W_e$ acts on $\Gamma \setminus \{e ,-e\}$. By the same argument, there exists $e' \in \Gamma \setminus \{e ,-e\}$ such that the stabilizer
$
   (W_e)_{e'}= W_e \cap W_{e'}    
$
is infinite. By an immediate induction, we show that the intersection of all stabilizers $W_0= \cap_{e \in \Gamma} W_e$ is infinite. 
Now set $V =\mathrm{span}\Gamma$. Then $W_0$ acts on $V$ as $\{\Id\}$ (since it acts as  $\{\Id\}$ on a basis of $V$). Notice that if $x \in V^\perp$ then $x \in \cap_{H \in \Lambda} H$ and thus is invariant under the action of $W$ and thus of $W_0$. This means that $W_0$ acts on $V$ and $V^\perp$ as $\{\Id\}$  and thus $W_0 =\{ \Id\}$, which is a contradiction. 

\item {\em Conclusion.} Let $C= \cup_{t>0} tU$. Clearly $C$ is connected, does not intersect $\cup_{H \in \Lambda} H$ (since $P$ does not vanish on $C$) and any point of $\partial U$ belongs to $\cup_{H \in \Lambda} H$. This means that $C$ is a connected component of $\R^d \setminus \bigl( \cup_{H \in \Lambda}H\bigr) $, which implies that $U$ is a connected component of  $ {\mathbb{S}}^{d-1} \cap \bigl( \cup_{H \in \Lambda} H \bigr)$. 
\end{itemize}

\section{Classification of polyhedral nodal domains in small dimensions} \label{classification}

Let $U ={\mathbb{S}}^{d-1} \cap H_1^+ \cap \cdots \cap H_r^+$ be a polyhedral  domain with $r$ sides, as in \eqref{eq:def_polyhedral_domain}. It is completely characterized (up to an isometry) by the $\frac{r(r-1)}{2}$ angles $\al_{i,j}:= \widehat{(H_i,H_j)}$, $i < j$.
In this section, we give the exact list of angles for which $U$ is nodal. For any given polyhedral domain $U$, we will set   
\begin{equation*}
   a(U)=\bigl(\al_{1,2}, \ldots,\al_{1,r}, \al_{2,3},\ldots,\al_{2,r},\ldots ,\al_{r-1, r}\bigr).
\end{equation*} 
The following matrix representation of the (cosine of the)\ angles also appears in the literature (see e.g.\ \cite[Chap.~5]{Bo-68}):
\begin{equation*}
    \left(\begin{array}{ccccc}
    1& -\cos \alpha_{1,2}& \cdots & \cdots  &-\cos \alpha_{1,r}\\
    -\cos\alpha_{1,2}& 1 & -\cos\alpha_{2,3} &  \ldots & -\cos\alpha_{2,r}\\ \vdots &-\cos\alpha_{2,3} &\ddots  &\ddots & \vdots
    \\
        \vdots   & \cdots & \ddots & 1& -\cos\alpha_{r-1,r}\\
     -\cos\alpha_{1,r} & \cdots & \cdots & \cos \alpha_{r-1,r}  & 1
    \end{array}\right).
\end{equation*}
Interestingly, by Proposition~\ref{prop:angleHiHj} the above matrix corresponds exactly to our covariance matrix $\Delta$ in \eqref{eq:expression_covariance_matrix}.

We will say that a $\frac{r(r-1)}{2}$-tuple $a$ is admissible is there exists a polyhedral nodal domain $U$ such that  $a(U)=a$.   From Theorem~\ref{pndomains}, to each finite Coxeter group $W$ acting on $\R^d$ corresponds a unique polyhedral domain (uniqueness comes from the fact that all chambers are isometric). We then use the classification of irreducible Coxeter groups to classify these domains, as given in \cite[Chap.~2]{Hu-90}.

Let $W = W_1 \times\cdots \times W_m$ be a Coxeter group of $\R^d$. We use the decomposition $\R^d = (\oplus_{i=1}^m V_i)  \oplus Z$ of Appendix~\ref{sec:coxeter}; the quantity $r=d-\dim Z$ is the rank of $W$. 
Since in the whole paper (except in Example~\ref{ex:ex_Coxeter_A}), we deal with Coxeter groups of rank $d$ in $\R^d$, we reduce here the computation to the case where $W$ has rank $d$, i.e. 
\begin{equation}
\label{m=}
    \R^d = \oplus_{i=1}^m V_i
\end{equation}
and $U$ has exactly $r=d$ sides. In a second step, we will classify the $\frac{d(d-1)}{2}$-admissible tuples.

Observe that if $\si$  is a permutation of $\{1,\ldots, d\}$, then $a(U)=(\al_{i,j})$ is admissible if and only if $a(V)= (\al_{\si(i),\si(j)})$ is admissible, since the polyhedral domain $V$ can be obtained by $U$ by an isometry permuting the indices. In particular, in the classification below, we just choose one admissible tuple in each class under the action of the permutation group.

\medskip 

We will denote $k_1,\ldots,k_m$ the dimensions of the $V_i$ in \eqref{m=}, or equivalently, the ranks of the irreducible Coxeter groups $W_i$.  
  Note that, with the notation of Appendix~\ref{sec:coxeter}, it holds that $\sharp \Lambda= \sum_{i=1}^m \sharp \Lambda_i$. In the following, we use the classification of Coxeter groups given in \cite[Chap.~2]{Hu-90}. Note that in this classification the Coxeter group $A_d$ is isomorphic to the permutation group $\mathfrak{S}_{d+1}$. We use this several times in the paper.
  
\subsection{Dimension two}  
For $n=2$,  there is no need to use Theorem~\ref{pndomains}: up to an isometry, a nodal domain is just of the form 
$U_0={\mathbb{S}}^1$ or $U_k= \{(\cos(t),\sin(t)) \vert t \in (0, \frac{\pi}{k})\}$ for some $k \geq 1$. In the latter case, using the same notation as in Example~\ref{ex:dim2_DH}, the Coxeter group $W$ is the dihedral group $D_{2k}$ of order $2k$.

\subsection{Dimension three} 
Here $\frac{d(d-1)}{2}= 3$ and we give all admissible triplets and the corresponding $\la_1(U)$.  

\medskip

\begin{center}
\begin{tikzpicture}
	\begin{scope}[xshift=-8cm]
		\draw (-0.2,0) node[left]{$\frac{\Z}{2\Z}\times\frac{\Z}{2\Z}\times\frac{\Z}{2\Z}$};
		\draw (0,0) node{$\bullet$};
		\draw (1,0) node{$\bullet$};
		\draw (2,0) node{$\bullet$};
		\draw (2.2,0) node[right]{$\left(\frac{\pi}{2}, \frac{\pi}{2}, \frac{\pi}{2}\right)$};
	\end{scope}
	\begin{scope}[xshift=-8cm,yshift=-1cm]
		\draw (-0.2,0) node[left]{$\frac{\Z}{2\Z}\times D_{2k}$};
		\draw (0,0) node{$\bullet$};
		\draw (1,0) node{$\bullet$};
		\draw (2,0) node{$\bullet$};
		\draw[line width=1] (1,0)--(2,0);
		\draw (1.5,0) node[above]{$k$};
		\draw (2.2,0) node[right]{$\left(\frac{\pi}{2}, \frac{\pi}{2}, \frac{\pi}{k}\right)$};
	\end{scope}
	\draw (-0.2,0) node[left]{$A_3$};
	\draw (0,0) node{$\bullet$};
	\draw (1,0) node{$\bullet$};
	\draw (2,0) node{$\bullet$};
	\draw[line width=1] (0,0)--(1,0);
	\draw[line width=1] (1,0)--(2,0);
	\draw (2.2,0) node[right]{$\left(\frac{\pi}{3}, \frac{\pi}{2}, \frac{\pi}{3}\right)$};
	\begin{scope}[yshift=-1cm]
		\draw (-0.2,0) node[left]{$B_3$};
		\draw (0,0) node{$\bullet$};
		\draw (1,0) node{$\bullet$};
		\draw (2,0) node{$\bullet$};
		\draw[line width=1] (0,0)--(1,0);
		\draw[line width=1] (1,0)--(2,0);
		\draw (1.5,0) node[above]{$4$};
		\draw (2.2,0) node[right]{$\left(\frac{\pi}{3}, \frac{\pi}{2}, \frac{\pi}{4}\right)$};
	\end{scope}
	\begin{scope}[yshift=-2cm]
		\draw (-0.2,0) node[left]{$H_3$};
		\draw (0,0) node{$\bullet$};
		\draw (1,0) node{$\bullet$};
		\draw (2,0) node{$\bullet$};
		\draw[line width=1] (0,0)--(1,0);
		\draw[line width=1] (1,0)--(2,0);
		\draw (0.5,0) node[above]{$5$};
		\draw (2.2,0) node[right]{$\left(\frac{\pi}{5}, \frac{\pi}{2}, \frac{\pi}{3}\right)$};
	\end{scope}
\end{tikzpicture}

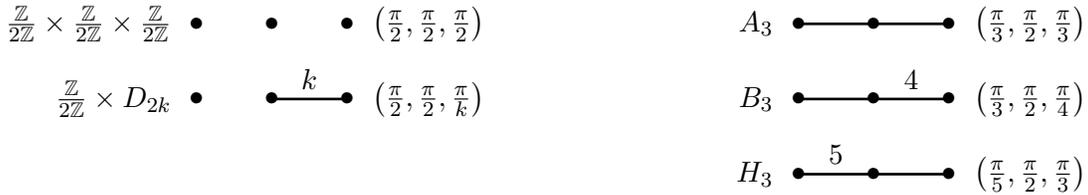
\captionof{figure}{Classification of reducible (on the left) and irreducible (on the right) Coxeter groups in dimension $d=3$. For each graph, the set of vertices is \( a(U) \), and two vertices \( \alpha \) and \( \beta \) are connected by an edge if \( m(\alpha, \beta) \geqslant 3 \), where \( m(\alpha, \beta) \) is the order of \( \alpha\beta \). In this case, the edge connecting \( \alpha \) to \( \beta \) is labeled with \( m(\alpha, \beta) \) (this label is omitted if \( m(\alpha, \beta) = 3 \)). Two vertices \( \alpha \) and \( \beta \) that are not connected satisfy \( m(\alpha, \beta) = 2 \). The Coxeter groups $A_3$ and $B_3$ belong to one-parameter families $A_d$ and $B_d$, which will be properly introduced in Examples~\ref{ex:ex_Coxeter_A} and \ref{ex:ex_Coxeter_B}, respectively.}
\label{classcox3}
\end{center}

We recall that $m$ is the number of irreducible components of $W$ (see~\eqref{m=})\ and $k_1,\ldots,k_m$ are the ranks of these irreducible components. See Figure~\ref{classcox3} for some important definitions. The following is a list of possible cases:
\begin{itemize} 
 \item $m=3$, then necessarily $k_1=k_2=k_3=1$. Then $a(U)= (\frac{\pi}{2}, \frac{\pi}{2}, \frac{\pi}{2})$,  $W=\frac{\Z}{2\Z}\times\frac{\Z}{2\Z}\times\frac{\Z}{2\Z}$ and $\sharp \Lambda=3$, so by Theorem~\ref{pndomains}, $\la_1(U)=12$.
 \item $m=2$, then (up to a permutation)\ $k_1=1$ and $k_2=2$. Then there exists $k \geq 2$ such that  $a(U)=(\frac{\pi}{2},  \frac{\pi}{2}, \frac{\pi}{k})$, $W=\frac{\Z}{2\Z}\times D_{2k}$ and $\sharp \Lambda= k+1$, so by Theorem~\ref{pndomains}, $\la_1(U)=(k+1)(k+2)$.
 \item $m=1$, $k_1=3$. Then, the list of admissible triplets are 
 \begin{itemize}
  \item $a(U) = (\frac{\pi}{3},\frac{\pi}{2},\frac{\pi}{3})$, $W=A_3$, $\sharp \Lambda=6$, so by Theorem~\ref{pndomains}, $\la_1(U)= 42$;
  \item $a(U)=(\frac{\pi}{3},\frac{\pi}{2},\frac{\pi}{4})$, $W=B_3$, $\sharp \Lambda=9$, so by Theorem~\ref{pndomains}, $\la_1(U)=90$;
  \item $a(U)=(\frac{\pi}{5},\frac{\pi}{2},\frac{\pi}{3})$, $W=H_3$, $\sharp \Lambda=15$, so by Theorem~\ref{pndomains}, $\la_1(U)=240$.
 \end{itemize}
\end{itemize}

\subsection{Dimension four}
\label{subsec:classif_Dim_4}
We shall use the classification of Coxeter groups in dimension four, which we now recall:
\begin{center}
	\begin{tikzpicture}
		\begin{scope}[xshift=-8cm]
			\draw (-0.2,0) node[left]{$\left(\frac{\Z}{2\Z}\right)^4$};
			\draw (0,0) node{$\bullet$};
			\draw (1,0) node{$\bullet$};
			\draw (2,0) node{$\bullet$};
			\draw (3,0) node{$\bullet$};
			\draw (3.2,0) node[right]{$\left(\frac{\pi}{2}, \frac{\pi}{2}, \frac{\pi}{2}, \frac{\pi}{2}, \frac{\pi}{2}, \frac{\pi}{2}\right)$};
		\end{scope}
		\begin{scope}[xshift=-8cm,yshift=-1cm]
			\draw (-0.2,0) node[left]{$\left(\frac{\Z}{2\Z}\right)^2 \times D_{2m}$};
			\draw (0,0) node{$\bullet$};
			\draw (1,0) node{$\bullet$};
			\draw (2,0) node{$\bullet$};
			\draw (3,0) node{$\bullet$};
			\draw[line width=1] (2,0)--(3,0);
			\draw (2.5,0) node[above]{$m$};
			\draw (3.2,0) node[right]{$\left(\frac{\pi}{2}, \frac{\pi}{2}, \frac{\pi}{2}, \frac{\pi}{2}, \frac{\pi}{2}, \frac{\pi}{m}\right)$};
		\end{scope}
		\begin{scope}[xshift=-8cm,yshift=-2cm]
			\draw (-0.2,0) node[left]{$D_{2m} \times D_{2m'}$};
			\draw (0,0) node{$\bullet$};
			\draw (1,0) node{$\bullet$};
			\draw (2,0) node{$\bullet$};
			\draw (3,0) node{$\bullet$};
			\draw[line width=1] (2,0)--(3,0);
			\draw[line width=1] (0,0)--(1,0);
			\draw (0.5,0) node[above]{$m$};
			\draw (2.5,0) node[above]{$m'$};
			\draw (3.2,0) node[right]{$\left(\frac{\pi}{m}, \frac{\pi}{2}, \frac{\pi}{2}, \frac{\pi}{2}, \frac{\pi}{2}, \frac{\pi}{m'}\right)$};
		\end{scope}
		\begin{scope}[xshift=-8cm,yshift=-3cm]
			\draw (-0.2,0) node[left]{$\frac{\Z}{2\Z} \times A_3$};
			\draw (0,0) node{$\bullet$};
			\draw (1,0) node{$\bullet$};
			\draw (2,0) node{$\bullet$};
			\draw (3,0) node{$\bullet$};
			\draw[line width=1] (2,0)--(3,0);
			\draw[line width=1] (1,0)--(2,0);
			\draw (3.2,0) node[right]{$\left(\frac{\pi}{2}, \frac{\pi}{2}, \frac{\pi}{2}, \frac{\pi}{3}, \frac{\pi}{2}, \frac{\pi}{3}\right)$};
		\end{scope}
		\begin{scope}[xshift=-8cm,yshift=-4cm]
			\draw (-0.2,0) node[left]{$\frac{\Z}{2\Z} \times B_3$};
			\draw (0,0) node{$\bullet$};
			\draw (1,0) node{$\bullet$};
			\draw (2,0) node{$\bullet$};
			\draw (3,0) node{$\bullet$};
			\draw[line width=1] (2,0)--(3,0);
			\draw[line width=1] (1,0)--(2,0);
			\draw (2.5,0) node[above]{$4$};
			\draw (3.2,0) node[right]{$\left(\frac{\pi}{2}, \frac{\pi}{2}, \frac{\pi}{2}, \frac{\pi}{3}, \frac{\pi}{2}, \frac{\pi}{4}\right)$};
		\end{scope}
		\begin{scope}[xshift=-8cm,yshift=-5cm]
			\draw (-0.2,0) node[left]{$\frac{\Z}{2\Z} \times H_3$};
			\draw (0,0) node{$\bullet$};
			\draw (1,0) node{$\bullet$};
			\draw (2,0) node{$\bullet$};
			\draw (3,0) node{$\bullet$};
			\draw[line width=1] (2,0)--(3,0);
			\draw[line width=1] (1,0)--(2,0);
			\draw (1.5,0) node[above]{$5$};
			\draw (3.2,0) node[right]{$\left(\frac{\pi}{2}, \frac{\pi}{2}, \frac{\pi}{2}, \frac{\pi}{5}, \frac{\pi}{2}, \frac{\pi}{3}\right)$};
		\end{scope}
		\draw (-0.2,0) node[left]{$A_4$};
		\draw (0,0) node{$\bullet$};
		\draw (1,0) node{$\bullet$};
		\draw (2,0) node{$\bullet$};
		\draw (3,0) node{$\bullet$};
		\draw[line width=1] (0,0)--(1,0);
		\draw[line width=1] (1,0)--(2,0);
		\draw[line width=1] (2,0)--(3,0);
		\draw (3.2,0) node[right]{$\left(\frac{\pi}{3}, \frac{\pi}{2}, \frac{\pi}{2}, \frac{\pi}{3}, \frac{\pi}{2}, \frac{\pi}{3}\right)$};
		\begin{scope}[yshift=-1cm]
			\draw (-0.2,0) node[left]{$B_4$};
			\draw (0,0) node{$\bullet$};
			\draw (1,0) node{$\bullet$};
			\draw (2,0) node{$\bullet$};
			\draw (3,0) node{$\bullet$};
			\draw[line width=1] (0,0)--(1,0);
			\draw[line width=1] (1,0)--(2,0);
			\draw[line width=1] (2,0)--(3,0);
			\draw (2.5,0) node[above]{$4$};
			\draw (3.2,0) node[right]{$\left(\frac{\pi}{3}, \frac{\pi}{2}, \frac{\pi}{2}, \frac{\pi}{3}, \frac{\pi}{2}, \frac{\pi}{4}\right)$};
		\end{scope}
		\begin{scope}[yshift=-2cm]
			\draw (-0.2,0) node[left]{$D_4$};
			\draw (0.5,0) node{$\bullet$};
			\draw (1.5,0) node{$\bullet$};
			\draw (2.5,0.5) node{$\bullet$};
			\draw (2.5,-0.5) node{$\bullet$};
			\draw[line width=1] (0.5,0)--(1.5,0);
			\draw[line width=1] (1.5,0)--(2.5,0.5);
			\draw[line width=1] (1.5,0)--(2.5,-0.5);
			\draw (3.2,0) node[right]{$\left(\frac{\pi}{3}, \frac{\pi}{2}, \frac{\pi}{2}, \frac{\pi}{3}, \frac{\pi}{3}, \frac{\pi}{2}\right)$};
		\end{scope}
		\begin{scope}[yshift=-3cm]
			\draw (-0.2,0) node[left]{$F_4$};
			\draw (0,0) node{$\bullet$};
			\draw (1,0) node{$\bullet$};
			\draw (2,0) node{$\bullet$};
			\draw (3,0) node{$\bullet$};
			\draw[line width=1] (0,0)--(1,0);
			\draw[line width=1] (1,0)--(2,0);
			\draw[line width=1] (2,0)--(3,0);
			\draw (1.5,0) node[above]{$4$};
			\draw (3.2,0) node[right]{$\left(\frac{\pi}{3}, \frac{\pi}{2}, \frac{\pi}{2}, \frac{\pi}{4}, \frac{\pi}{2}, \frac{\pi}{3}\right)$};
		\end{scope}
		\begin{scope}[yshift=-4cm]
			\draw (-0.2,0) node[left]{$H_4$};
			\draw (0,0) node{$\bullet$};
			\draw (1,0) node{$\bullet$};
			\draw (2,0) node{$\bullet$};
			\draw (3,0) node{$\bullet$};
			\draw[line width=1] (0,0)--(1,0);
			\draw[line width=1] (1,0)--(2,0);
			\draw[line width=1] (2,0)--(3,0);
			\draw (0.5,0) node[above]{$5$};
			\draw (3.2,0) node[right]{$\left(\frac{\pi}{5}, \frac{\pi}{2}, \frac{\pi}{2}, \frac{\pi}{3}, \frac{\pi}{2}, \frac{\pi}{3}\right)$};
		\end{scope}
	\end{tikzpicture}
	
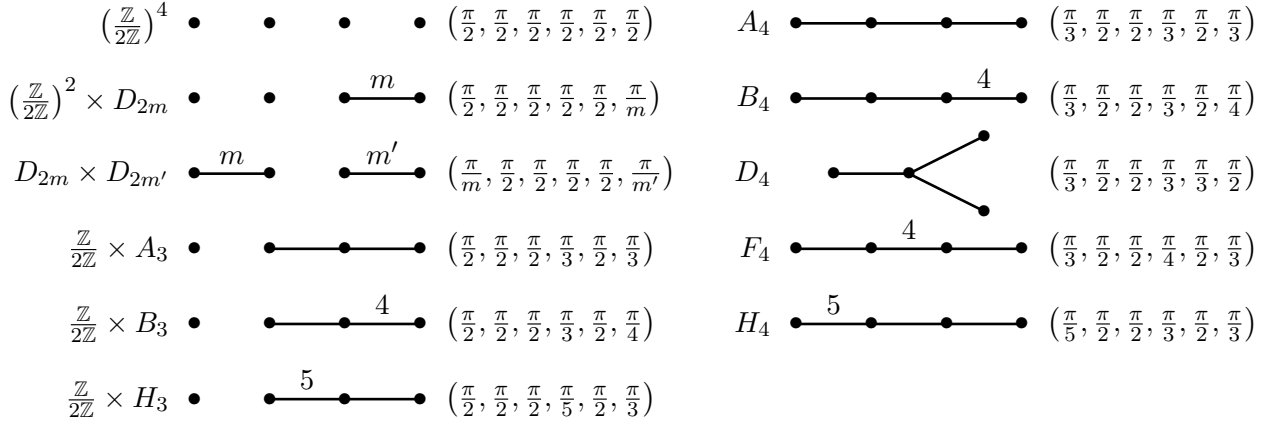
\captionof{figure}{Classification of reducible (on the left)\ and irreducible (on the right)\ Coxeter groups in dimension $d=4$.}\label{classcox4}
\end{center}
We now classify all admissible $6$-tuples (up to isometry) and recover the classification done in dimension four by Choe and Soret in \cite[Sec.~5]{ChSo-09}:
 \begin{itemize}
  \item $m=4$, $k_1=k_2=k_3=k_4=1$. Then $a(U)= (\frac{\pi}{2},\frac{\pi}{2},\frac{\pi}{2},\frac{\pi}{2},\frac{\pi}{2},\frac{\pi}{2})$, $W=\left(\frac{\Z}{2\Z}\right)^4$ and  $\sharp \Lambda= 4$, so by Theorem~\ref{pndomains}$, \la_1(U)=24$;
  \item $m=3$, $k_1=2, k_2=k_3=1$. Then $a(U)= (\frac{\pi}{2},\frac{\pi}{2},\frac{\pi}{2},\frac{\pi}{2},\frac{\pi}{2},\frac{\pi}{k})$, $k \geq 2$, $W=\left(\frac{\Z}{2\Z}\right)^2 \times D_{2k}$ and $\sharp \Lambda=k+2$, so by Theorem~\ref{pndomains}, $\lambda_1(U) = (k+2)(k+4)$;
\item $m=2$, $k_1=k_2=2$. Then $a(u) = (\frac{\pi}{k},\frac{\pi}{2},\frac{\pi}{2},\frac{\pi}{2},\frac{\pi}{2},\frac{\pi}{k'})$, $k,k' \geq 2$, $W=D_{2k} \times D_{2k'}$ and $\sharp \Lambda= k+k'$, so by Theorem~\ref{pndomains}, $\la_1(U)=(k+k')(k+k'+2)$;
\item $m=2$, $k_1=3$, $k_2=1$. Using the classification in dimension $3$ (see Figure~\ref{classcox3} on the right), we obtain the following cases: 
\begin{itemize}
  \item $a(U) = (\frac{\pi}{2},\frac{\pi}{2}, \frac{\pi}{2}, \frac{\pi}{2},\frac{\pi}{3},\frac{\pi}{3})$, $W\simeq A_3\times \frac{\Z}{2\Z}$ and $\sharp \Lambda=7$, so by Theorem~\ref{pndomains}, $\la_1(U)= 63$;
  \item $a(U)=(\frac{\pi}{2},\frac{\pi}{2}, \frac{\pi}{2},\frac{\pi}{2},\frac{\pi}{3},\frac{\pi}{4})$, $W\simeq B_3\times \frac{\Z}{2\Z}$ and $\sharp \Lambda=10$, so by Theorem~\ref{pndomains}, $\la_1(U)=120$;
  \item $a(U)=(\frac{\pi}{2},\frac{\pi}{2},\frac{\pi}{2},\frac{\pi}{2},\frac{\pi}{3},\frac{\pi}{5})$, $W \simeq H_3 \times \frac{\Z}{2\Z}$ and $\sharp \Lambda=16$, so by Theorem~\ref{pndomains}, $\la_1(U)=288 $;
\end{itemize}

  \item $m=1$, $k_1=4$. There are five possibilities: 
\begin{itemize}
   \item $a(U) = (\frac{\pi}{3},\frac{\pi}{2},\frac{\pi}{2},\frac{\pi}{3},\frac{\pi}{2},\frac{\pi}{3})$, $W= A_4$, $\sharp \Lambda=10$, $\la_1(U)= 120$; 
    \item $a(U) = (\frac{\pi}{3},\frac{\pi}{2},\frac{\pi}{2},\frac{\pi}{3},\frac{\pi}{2},\frac{\pi}{4})$, $W= B_4$, $\sharp \Lambda=16$, $\la_1(U)= 272$; 
    \item $a(U)=  (\frac{\pi}{3},\frac{\pi}{3},\frac{\pi}{3},\frac{\pi}{2},\frac{\pi}{2},\frac{\pi}{2})$, $W= D_4$, $\sharp \Lambda=12$, $\la_1(U)= 168$;  
    \item $a(U)=  (\frac{\pi}{3},\frac{\pi}{2},\frac{\pi}{2},\frac{\pi}{4},\frac{\pi}{2},\frac{\pi}{3})$, $W= F_4$, $\sharp \Lambda=24$, $\la_1(U)= 624$; 
    \item $a(U)= (\frac{\pi}{5},\frac{\pi}{2},\frac{\pi}{2},\frac{\pi}{3},\frac{\pi}{2},\frac{\pi}{3})$, $W= H_4$, $\sharp \Lambda=60$, $\la_1(U)= 3720$.
  \end{itemize}
\end{itemize}

\section{Three examples} 
\label{sec:three_ex}

To conclude this part, we give three examples where our results allow $\la_1$ to be computed explicitly. We recall (see Theorem~\ref{thm:DW_formula_exponent}) that $\la_1$ is the smallest eigenvalue of the Dirichlet problem~\eqref{eq:one-term-asymp}, and its explicit value gives the asymptotics \eqref{eq:one-term-asymp} of the number of excursions between two points. 
\begin{ex}\normalfont
Consider the $4$-dimensional model whose inventory \eqref{eq:inventory} is given by 
\begin{equation*}
    \chi_\mathcal S(x,y,z,w) = \overline{w}+x \overline{z}+ \overline{x}y + z+ \overline{y}w.
\end{equation*}
This is model~37 in \cite[Tab.~3]{BuHoKa-21}, whose group $G$ is proved in \cite{BuHoKa-21} to be isomorphic to the symmetry group $\mathfrak S_5$. We have $\boldsymbol{x_0}=(1,1,1,1)$ and a simple calculation gives 
\begin{equation*}\Delta = \left( \begin{array}{cccc} 
1 & - \frac{1}{2} & -\frac{1}{2}  & 0 \\
 -\frac{1}{2} & 1 &  0 &  - \frac{1}{2}  \\
-\frac{1}{2} & 0 & 1 &  0 \\
0 & -\frac{1}{2} &  0& 1 
\end{array}
\right). \end{equation*}
Using the notation of Section~\ref{classification}, the list of angles between the hyperplanes bounding $\Delta^{-\frac{1}{2}} \R^d_+$ is given by Proposition~\ref{prop:reformulation_cov_matrix} and its consequence~\eqref{angle}:  
$\left(\frac{\pi}{3}, \frac{\pi}{3},\frac{\pi}{2},\frac{\pi}{2},\frac{\pi}{3},\frac{\pi}{2} \right)$. The permutation of variables $(x,y,z,w) \to (z,x,y,w)$   gives rise to the list 
$$\left(\frac{\pi}{3},\frac{\pi}{2},\frac{\pi}{2},\frac{\pi}{3},\frac{\pi}{2}, \frac{\pi}{3}\right).$$
From the classification of Section~\ref{subsec:classif_Dim_4}, we obtain that $\la_1=120$. 

 This example is actually a particular case (up to a permutation of variables) of the $d$-dimensional example given in Section~\ref{toolG}. With the same argument, one checks that for the $d$-dimensional case, it holds that  $\la_1= \frac{1}{4}d(d+1)(d+4)(d-1)$. 
\end{ex}\normalfont

\begin{ex}\normalfont
\label{ex:ex_Coxeter_A}
Consider the simple $d$-dimensional random walk with jumps 
\begin{equation*}
    \chi_\mathcal S(x_1,\ldots,x_d)=x_1+\overline{x_1}+\cdots + x_d+\overline{x_d},
\end{equation*}
in the cone given by the Weyl chamber of type $A$, namely \begin{equation*}
    W_A=\{x_1<x_2<\cdots <x_d\}.
\end{equation*}
The asymptotics of the number of such walks (sometimes called non-intersecting lattice paths)\ is known and the exponent $\alpha$ in \eqref{eq:one-term-asymp} is given by $\alpha=\frac{d^2}{2}$; see e.g.\ \cite[Thm~2.1]{Fe-18},  \cite[Thm~1.1]{EiKo-08} and \cite[Thm~1]{DeWa-10}. Let us briefly explain how this relates to our results. 
First, we observe that the walls of $W_A$ are the hyperplanes $H_i$, $1 \leq i \leq d-1$ defined by the equations $x_i=x_{i+1}$. Define $u_i=e_i-e_{i+1}$, where $(e_1,\ldots,e_d)$ is the canonical basis of $\R^d$. Then $u_i \perp H_i$ and is clearly outward with respect to $W_A$. Since $(u_i,u_j)= \|u_i\| \|u_j\| \cos( \widehat{u_i u_j} )$, we obtain that 
\begin{equation} \label{WA} 
\widehat{H_i H_j}= \pi -\widehat{u_i u_j} = \left| \begin{array}{cl} 
\frac{\pi}{3} & \hbox{if }  |i-j|=1, \\
\frac{\pi}{2} & \hbox{otherwise}.
\end{array} \right. \end{equation}
From the classification of Coxeter groups, we get that $W_A$ is a chamber of the group $A_{d-1}= \mathfrak{S}_{d}$ which is a $(d-1)$-rank Coxeter group.
We use our Theorem~\ref{pndomains} to show that  
\begin{equation*}
    \la_1(U)=k(d-2+k),
\end{equation*}
where $k=\sharp \Lambda$ is the number of hyperplanes needed to define the Weyl chamber $W_A$, or equivalently the number of reflections in $A_d$, which is known to be $k=\frac{d(d-1)}{2}$. Using the formula~\eqref{eq:DW_exponent}, we immediately obtain $\alpha=\frac{d^2}{2}$.
\end{ex}

\begin{ex}\normalfont
\label{ex:ex_Coxeter_B}
Considering the same random walk as in Example~\ref{ex:ex_Coxeter_A}, but in the Weyl chamber of type~$B$
\begin{equation*}
    W_B=\{0<x_1<x_2<\cdots <x_d\},
\end{equation*}
this model has also been explored in the literature, see \cite[Thm~2.3]{KoSc-10} and \cite[Thm~5.1]{Fe-14}. It is also known as a model of $d$ vicious walkers models subject to a wall restriction. In particular, it is known that the exponent is $\alpha=d^2+\frac{d}{2}$.
The chamber $W_B$ has the same walls as $W_A$ above plus $H_0$ defined by $x_1=0$. Clearly, beside the relations~\eqref{WA}, we have, since $e_1$ is an inward  normal vector to $H_0$,  
\begin{equation*}
\widehat{H_0 H_i} = \widehat{e_1 u_i} = \left| \begin{array}{cl} 
\frac{\pi}{4} & \hbox{if }  i=1, \\
\frac{\pi}{2} & \hbox{otherwise}.
\end{array} \right.
\end{equation*}
We deduce that the associated Coxeter group is the $d$-rank group $B_d$. As in Example~\ref{ex:ex_Coxeter_A}, we use Theorem~\ref{pndomains} with $k=d^2$ (corresponding to the number of reflections in $ B_d$) and $\la_1(U)=k(d-2+k)$, and finally we use \eqref{eq:DW_exponent} and get the announced value of the exponent $\alpha$.
\end{ex}

\appendix
\section{Coxeter groups} 
\label{sec:coxeter} 
In this section we recall some facts about Coxeter groups used throughout our article; they can all be found in \cite{Hu-90}. 

\begin{defn}
\label{def:Coxeter_system}
Let $W$ be a group. 
\begin{itemize}
    \item A {\em Coxeter system} is a set $S \subset W$ of generators,   subject only to
    relations $(ss')^{m(s,s')}=1$ for all $s,s'\in S$. Here $m(s,s') \in \N$ satisfies $m(s,s)=1$ and if $s\not=s'$, $m(s,s') \geq 2$.
    \item The group $W$ is a {\em Coxeter group} if it admits a Coxeter system.  
    \end{itemize}
\end{defn}

Many results are known about Coxeter groups, but we focus on two results that play a crucial role in our paper: 

\begin{thm}[p.~16 in \cite{Hu-90}]
Let $V$ be a finite dimensional space with a scalar product $\langle \cdot,\cdot\rangle$, and let $\Lambda$ be a set of independent vectors. For each $v \in \Lambda$, let $s_v$ be the orthogonal reflection with respect to $\langle v\rangle^\perp$ and $W$ be the subgroup of $\bigl(V, \langle\cdot,\cdot\rangle \bigr) $ spanned by $\{s_v | v \in \Lambda\}$.  Then the reflection group $W$ is a Coxeter group. 
\end{thm}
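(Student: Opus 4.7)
The plan is to establish the Coxeter presentation of $W$ and verify it by means of a faithful linear representation, following Tits's classical strategy. First, after normalising each $v\in\Lambda$ to unit length (which affects neither $s_v$ nor the hyperplane $\langle v\rangle^\perp$), I would define $m(v,v')$ to be the order of $s_vs_{v'}$ in $W$, with $m(v,v)=1$. This integer is well defined because $s_vs_{v'}$ acts as a rotation in the plane $\mathrm{span}(v,v')$ and as the identity on its orthogonal complement, so its order equals that of the associated planar rotation. I would then introduce the abstract group $\widetilde W = \langle\, t_v,\, v\in\Lambda \mid (t_vt_{v'})^{m(v,v')}=1 \,\rangle$ and note that there is a canonical surjective morphism $\pi\colon\widetilde W\to W$ sending $t_v\mapsto s_v$, since the $s_v$ generate $W$ and satisfy the defining relations of $\widetilde W$. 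Everything reduces to proving that $\pi$ is injective.

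For injectivity I would invoke Tits's theorem on the faithfulness of the geometric representation of a Coxeter group. On the free real vector space $E$ with basis $(e_v)_{v\in\Lambda}$, one equips $E$ with the symmetric bilinear form $B(e_v,e_{v'}) = -\cos(\pi/m(v,v'))$ and defines the reflections $\tau_v(x) = x - 2B(e_v,x)\,e_v$; Tits's theorem then states that the resulting homomorphism $\widetilde W\to\operatorname{GL}(E)$, $t_v\mapsto\tau_v$, is injective. The remaining step is to realise this faithful representation as a subrepresentation of the natural action of $W$ on $V$, through a linear map $\Psi\colon E\to V$ of the form $\Psi(e_v)=\varepsilon_v\, v$ with suitable signs $\varepsilon_v\in\{\pm 1\}$. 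The linear independence of $\Lambda$ guarantees that $\Psi$ is injective; and if the signs can be chosen so that $\Psi$ is an isometry between $(E,B)$ and its image in $(V,\langle\cdot,\cdot\rangle)$, the intertwining $\Psi\tau_v = s_v\Psi$ follows directly from the definition of reflections, whence Tits's faithfulness forces $\ker\pi=\{1\}$.

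The principal obstacle is precisely the choice of signs $\varepsilon_v$ ensuring $\varepsilon_v\varepsilon_{v'}\langle v,v'\rangle = -\cos(\pi/m(v,v'))$ for every pair. For $m(v,v')=2$ both sides vanish and nothing is to be checked; for $m(v,v')\geq 3$, since a priori one only has $|\langle v,v'\rangle| = \cos(k\pi/m(v,v'))$ for some $k$ coprime to $m(v,v')$, the argument hinges on showing that one can coherently reorient the elements of $\Lambda$ so that every pairwise angle is obtuse, equivalently so that $\{\varepsilon_v v\}_{v\in\Lambda}$ becomes a simple system. This is where linear independence is crucially used, and in all the geometric situations arising in the paper (chambers of finite Coxeter groups, Weyl chambers, root systems) the required reorientation is automatic; once it is done, the identification of $(E,B)$ with $\mathrm{span}(\Lambda)\subset V$ closes the argument.
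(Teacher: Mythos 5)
There is a genuine gap, and it is not the technical loose end you flag at the end but a structural one: your argument sets out to prove that $(s_v)_{v\in\Lambda}$ is itself a Coxeter system for $W$ with exponents $m(v,v')$, which is strictly stronger than the statement (which only asserts that $W$ admits \emph{some} Coxeter system) and is in fact false; the paper itself warns, just before Proposition~\ref{prop:app}, that the generating reflections $r_1,\ldots,r_d$ need not form a Coxeter system. Concretely, take in $\mathbb{R}^3$ the independent vectors $v_1=e_1-e_2$, $v_2=e_2-e_3$, $v_3=e_1+e_3$ (normalised). All three products $s_{v_i}s_{v_j}$ have order $3$, so your $\widetilde W$ is the affine Coxeter group $\widetilde{A}_2$, which is infinite, while $W$ is the reflection group of the root system $\{\pm e_i\pm e_j\}$, i.e.\ $W\cong\mathfrak{S}_4$ of order $24$; hence $\pi$ is not injective. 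Correspondingly the intertwiner $\Psi$ cannot exist: the pairwise inner products are $-\tfrac12,+\tfrac12,-\tfrac12$ and the required signs would have to satisfy $\varepsilon_1\varepsilon_2=+1$, $\varepsilon_1\varepsilon_3=-1$, $\varepsilon_2\varepsilon_3=+1$, whose product is $-1$ instead of $+1$. So the ``coherent reorientation'' is not automatic --- it genuinely fails even inside finite reflection groups --- and asserting that it holds ``in all the geometric situations arising in the paper'' is not a proof of the theorem as stated. Two further instances of the same failure: if the angle between the hyperplanes is $k\pi/m$ with $k\not\equiv\pm1\ (\mathrm{mod}\ m)$ (e.g.\ $2\pi/5$), no sign change turns $\langle v,v'\rangle=\pm\cos(2\pi/5)$ into $-\cos(\pi/5)$; and if $m(v,v')=\infty$ you would need $\langle\varepsilon_v v,\varepsilon_{v'}v'\rangle=-1$, impossible for independent unit vectors. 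In each of these cases Tits's theorem simply does not apply to the representation you have.

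The missing idea --- and the actual content of the result cited from Humphreys --- is a \emph{change of generators}. One forms the root system $\Phi=\{\pm wv : w\in W,\ v\in\Lambda\}$, chooses a positive system and extracts a simple system $\Delta\subset\Phi$ (linearly independent with pairwise non-positive inner products, indeed satisfying $\langle\alpha,\beta\rangle=-\cos(\pi/m(\alpha,\beta))$ in the finite case), proves that $W=\langle s_\alpha : \alpha\in\Delta\rangle$, and then establishes the Coxeter presentation with respect to \emph{these} generators, via the deletion/exchange condition or a chamber-counting induction. Your representation-theoretic finish is legitimate once $\Lambda$ has been replaced by $\Delta$, but it cannot be run with the original $\Lambda$, so as written the proof does not establish the theorem. (A separate caveat: in the infinite, non-discrete case the existence of a simple system also requires an argument; the citation to \cite{Hu-90} really covers the finite/discrete setting used in the paper.)
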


\begin{thm}[p.~133 in \cite{Hu-90}]
Every Coxeter group can be realised as a reflection group (as in the statement of the above theorem). 
\end{thm}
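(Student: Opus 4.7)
The plan is to construct explicitly the Tits (geometric) representation of $W$ on a real vector space and then verify that the representing operators are orthogonal reflections with respect to a genuine scalar product.

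Given a Coxeter system $(W,S)$ with $S=\{s_1,\ldots,s_n\}$ and Coxeter matrix $(m_{ij}):=(m(s_i,s_j))$, I would first set $V := \R^n$ with basis $\{\alpha_1,\ldots,\alpha_n\}$, endow $V$ with the symmetric bilinear form
\[
B(\alpha_i,\alpha_j) := -\cos(\pi/m_{ij})
\]
(using the convention $\pi/\infty=0$), and define, for each $i$, the involution $\sigma_i \in GL(V)$ by $\sigma_i(v):=v-2B(\alpha_i,v)\alpha_i$. A direct computation gives $\sigma_i^2=\Id$, each $\sigma_i$ preserves $B$, and the restriction of $\sigma_i\sigma_j$ to the plane $\mathrm{span}(\alpha_i,\alpha_j)$ has order exactly $m_{ij}$ (a $2\times 2$ check realizing $\sigma_i\sigma_j$ as a plane rotation of angle $2\pi/m_{ij}$ when $m_{ij}<\infty$, and as a nontrivial unipotent map when $m_{ij}=\infty$). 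The universal property of the Coxeter presentation then yields a homomorphism $\rho:W\to GL(V)$ sending $s_i\mapsto \sigma_i$.

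The hard part is to show that $\rho$ is faithful, and this is where the entire combinatorial machinery of Coxeter groups comes in. I would follow the classical Tits argument by introducing the root system $\Phi:=\{\rho(w)\alpha_i:w\in W,\,1\leq i\leq n\}$ and proving the sign dichotomy that every element of $\Phi$ is either a nonnegative or a nonpositive linear combination of the simple roots $\alpha_1,\ldots,\alpha_n$. The proof is an induction on the Coxeter length $\ell(w)$ that rests on the strong exchange (equivalently deletion) condition for Coxeter groups. Once the dichotomy is established, if $w\neq \Id$ admits a reduced expression $s_{i_1}\cdots s_{i_\ell}$, then $\rho(s_{i_1}\cdots s_{i_{\ell-1}})\alpha_{i_\ell}$ is a positive root that $\rho(w)$ sends to a negative one, forcing $\rho(w)\neq \Id$.

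Finally, to match the setting of the preceding theorem, which demands a genuine scalar product, I would restrict to finite $W$ and show that $B$ itself is then positive definite. The argument is: pick any positive definite inner product on $V$ and average it over $W$ to produce a $W$-invariant scalar product $\langle\cdot,\cdot\rangle$; since $W$ is finite, Maschke's theorem decomposes $V$ into $W$-irreducible summands, and on each summand Schur's lemma identifies $B$ with a scalar multiple of $\langle\cdot,\cdot\rangle$, the scalar being positive because $B(\alpha_i,\alpha_i)=1>0$ for any simple root $\alpha_i$ belonging to that summand. Hence $(V,B)$ is itself a Euclidean space, $\{\alpha_1,\ldots,\alpha_n\}$ is a basis (in particular linearly independent), and each $\sigma_i$ is the orthogonal reflection across $\langle\alpha_i\rangle^\perp$, which matches exactly the hypotheses of the preceding theorem.
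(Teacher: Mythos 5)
The paper does not prove this statement at all --- it is quoted directly from Humphreys \cite{Hu-90} --- so there is no internal proof to compare against; your sketch is precisely the standard argument from that reference (Tits' geometric representation with the form $B(\alpha_i,\alpha_j)=-\cos(\pi/m_{ij})$, faithfulness via the positive/negative root dichotomy, and positive definiteness of $B$ in the finite case by averaging and Schur), and it is correct modulo the usual routine details (e.g.\ that $(\sigma_i\sigma_j)^{m_{ij}}=\Id$ on all of $V$ and not just on $\mathrm{span}(\alpha_i,\alpha_j)$, which uses the $B$-orthogonal complement of that plane, and that Schur's lemma over $\R$ really does force proportionality of invariant \emph{symmetric} forms on each summand). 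One remark worth making explicit: as literally stated, ``every Coxeter group'' realised with a genuine scalar product is only correct for finite $W$ --- for infinite $W$ the Tits form need not be definite --- and your restriction to the finite case at the end is exactly the reading the paper needs, since it only ever invokes this result for finite Coxeter groups.
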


Let $S$ be a Coxeter system of a finite Coxeter group $W$. All results in the following are, for example, given in \cite{Hu-90}. The integers $m(s,s')$ in Definition~\ref{def:Coxeter_system} completely determine $W$.  The classification of finite Coxeter groups implies the following
\begin{prop}
\label{possible_order}
If the Coxeter group $W$ has no irreducible component which is a dihedral group, then for all $s, s' \in S$, $m(s,s') \in \{2,3,4,5,6\}$.
\end{prop}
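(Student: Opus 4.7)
The plan is to reduce to the case of an irreducible Coxeter group and then read off the possible values of $m(s,s')$ from the classification of finite irreducible Coxeter groups. First, any finite Coxeter group $W$ decomposes as a direct product $W = W_1 \times \cdots \times W_k$ of finite irreducible Coxeter groups, and any Coxeter system $S$ splits correspondingly as a disjoint union $S = S_1 \sqcup \cdots \sqcup S_k$ with each $S_i$ a Coxeter system of $W_i$; the generators belonging to distinct factors commute, so $m(s,s') = 2$ whenever $s \in S_i$ and $s' \in S_j$ with $i \neq j$. Thus, assuming $s \neq s'$, it suffices to handle the case where $s, s'$ lie in the same irreducible component, which by hypothesis is not dihedral.

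Next, I would invoke the classification of finite irreducible Coxeter groups, see \cite[Chap.~2]{Hu-90}: the complete list consists of the four infinite families $A_n$ ($n \geq 1$), $B_n$ ($n \geq 2$), $D_n$ ($n \geq 4$) and $I_2(m)$ ($m \geq 3$), together with the exceptional groups $E_6, E_7, E_8, F_4, H_3, H_4$. The dihedral groups are precisely the members $I_2(m)$ of the rank-two family, so excluding them leaves the diagrams of $A_n$, $B_n$ (with $n \geq 3$), $D_n$, $E_6, E_7, E_8, F_4, H_3, H_4$. In each of these diagrams (reproduced in dimensions three and four in Figures \ref{classcox3} and \ref{classcox4}) the edge labels all belong to $\{3, 4, 5\}$, while a missing edge corresponds to $m(s,s') = 2$.

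Combining the two steps, $m(s,s') \in \{2,3,4,5\} \subset \{2,3,4,5,6\}$ for every pair of distinct generators in $S$, which is the desired conclusion. The main (and essentially only) obstacle is the classification of finite irreducible Coxeter groups, which I would quote as a standard result rather than reprove; once this is accepted, the proposition follows by direct inspection of the finite list of Coxeter diagrams.
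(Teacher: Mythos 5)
Your proposal is correct and follows essentially the same route as the paper, which simply asserts the proposition as a consequence of the classification of finite Coxeter groups; you merely spell out the reduction to irreducible components (cross-component generators commute, giving $m(s,s')=2$) and the inspection of the non-dihedral irreducible diagrams, whose edge labels all lie in $\{3,4,5\}$. The only cosmetic remark is that $A_2\cong\mathfrak{S}_3$ and $B_2$ are themselves dihedral and hence already excluded by the hypothesis, but since their labels are $3$ and $4$ this does not affect your conclusion, which in fact yields the slightly sharper set $\{2,3,4,5\}$.
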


Let $\Lambda$ be a set of hyperplanes of $\R^d$ and $W$ be the group of isometries spanned by 
$\{s_H| H\in \Lambda\}$. We assume that $W$ is finite. Then, there exist subspaces $V_1, \ldots,V_m,Z$ of $\R^d$,  $W_1,\ldots,W_m$ finite irreducible Coxeter groups of $V_1,\ldots,V_m$, such that 
\begin{itemize}
 \item $\R^d = (\oplus_{i=1}^m V_i)  \oplus Z$ and the sum is orthogonal; 
 \item $W= W_1 \times \cdots \times W_m$;
 \item $W$ acts  on $Z$ as the identity;
 \item $W_i$ acts transitively on the set $\Lambda_i$ of hyperplanes $H_i$ of $V_i$ such that
  \begin{equation} \label{lambdai}
   H:= Z \times V_1 \times\cdots V_{i-1} \times H_i \times V_{i+1} \times \cdots \times V_m \in \Lambda.
  \end{equation}
\end{itemize}
In particular $\Lambda = \cup_{i=1}^m \Lambda_i$, where the hyperplanes of $V_i$ are identified to hyperplanes of $\R^d$ via Equation~\eqref{lambdai}.  
The rank of $W$ is by definition \begin{equation*}
   \sum_{i=1}^m \dim V_i=d -\dim Z.    
\end{equation*}
A connected component of $\R^d \setminus \bigl(\cup_{H \in \Lambda} H \bigr)$  is called a {\em chamber of $W$}, and the hyperplanes of $\Lambda$ which intersect the boundary of a chamber are called {\em walls}. With this terminology,  Theorem~\ref{pndomains} says that a polyhedral domain $U$ of ${\mathbb{S}}^{d-1}$ is nodal if and only if it is the intersection of the chamber of a finite Coxeter group $W$ with ${\mathbb{S}}^{d-1}$. A classical theorem asserts that the number of walls of a chamber is the rank of $W$. We thus deduce that the number of sides of $U$ must also be the rank of $W$ and thus must be smaller than $d$. Note also that all the chambers are isometric. Finally, in the present situation, we have:
\begin{thm} \label{chamber_gen} 
Let $C$ be a chamber of a finite Coxeter group and let $\Lambda_C$ be the set of hyperplanes bounding $C$. Then $S = \{s_H| H\in \Lambda_C\}$ is a Coxeter system of $W$.
\end{thm}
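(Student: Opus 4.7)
The strategy is to verify the two defining properties of a Coxeter system for $(W, S)$ with $S = \{s_H : H \in \Lambda_C\}$: first, that $S$ generates $W$; second, that the defining relations of $W$ in terms of $S$ are exactly $(s_H s_{H'})^{m(H, H')} = \Id$, where $m(H, H')$ denotes the order of $s_H s_{H'}$.

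For generation, I would use a gallery argument. The complement $\R^d \setminus \bigl(\bigcup_{H \in \Lambda} H\bigr)$ decomposes into open chambers, and any two chambers can be connected by a \emph{gallery}, i.e.\ a sequence $C = C_0, C_1, \ldots, C_k$ of chambers where $C_{i-1}$ and $C_i$ share a common wall $H_i \in \Lambda$. I proceed by induction on $k$ to show that every $w \in W$ belongs to $\langle S\rangle$. If $C_1 = s_{H_1}(C)$ with $H_1 \in \Lambda_C$, then translating the gallery $C_1, \ldots, C_k = wC$ by $s_{H_1}$ produces a gallery of length $k-1$ from $C$ to $s_{H_1}wC$, so $s_{H_1} w \in \langle S \rangle$ by induction, and hence $w \in \langle S \rangle$. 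Simple transitivity of $W$ on chambers then yields $\langle S \rangle = W$.

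For the Coxeter relations, I would argue geometrically. Given two distinct walls $H, H' \in \Lambda_C$, the normals to $H$ and $H'$ span a plane $P$ on which $\langle s_H, s_{H'} \rangle$ acts as a finite dihedral group, fixing $H \cap H'$ pointwise. The interior angle between $H$ and $H'$ measured inside $C$ has the form $\pi/m(H, H')$ for some integer $m(H, H') \geq 2$: otherwise, iterated reflections $s_H, s_{H'}$ would produce hyperplanes of $\Lambda$ cutting $C$, contradicting the fact that $H$ and $H'$ are adjacent walls of the chamber $C$. Consequently $s_H s_{H'}$ acts as a rotation of angle $2\pi/m(H, H')$ on $P$ and as the identity on $H \cap H'$, giving the stated relation.

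The main obstacle is verifying that these are the \emph{only} relations, i.e.\ that $(W,S)$ truly admits this as a presentation. The standard route is via the exchange condition. Define the length $\ell(w)$ as the minimum number of factors needed to write $w$ as a product of elements of $S$, and the chamber distance $d(C, wC)$ as the number of hyperplanes in $\Lambda$ separating $C$ from $wC$. A refinement of the gallery argument, tracking which walls are crossed, shows $\ell(w) = d(C, wC)$. From this identification one derives the exchange condition: if $\ell(sw) < \ell(w)$ for some $s \in S$, then in any reduced expression $w = s_1 \cdots s_k$ there exists $i$ such that $sw = s_1 \cdots \hat{s}_i \cdots s_k$. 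By Matsumoto's theorem~\cite{Mat-64}, the exchange condition, combined with $s^2 = \Id$ for $s \in S$ and the known orders $m(H, H')$, forces precisely the Coxeter presentation, completing the proof.
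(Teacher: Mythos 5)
The paper does not actually prove this statement: it appears in Appendix~\ref{sec:coxeter} as one of several recalled classical facts, all attributed wholesale to \cite{Hu-90}, so there is no in-paper argument to compare yours against. Your sketch is essentially the standard textbook proof (Chapter~1 of \cite{Hu-90}, or Bourbaki \cite{Bo-68}): generation via galleries, the $\pi/m$ dihedral-angle computation for the order of $s_Hs_{H'}$, the identification $\ell(w)=d(C,wC)$, and the exchange condition to pin down the presentation. The outline is correct, with two small caveats. First, in the generation step you close the induction by invoking simple transitivity of $W$ on chambers, i.e.\ triviality of the stabiliser of $C$; but that fact is itself usually established within the same circle of ideas (via the length function), so as written there is a mild chicken-and-egg. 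The cleaner route is to observe that your gallery induction already shows $\langle S\rangle$ acts transitively on chambers, and then every reflection $s_H$ with $H\in\Lambda$ is conjugate by an element of $\langle S\rangle$ to a reflection in a wall of $C$ (since $H$ is a wall of some chamber $gC$ with $g\in\langle S\rangle$), whence $W=\langle s_H \mid H\in\Lambda\rangle\subseteq\langle S\rangle$, with no appeal to simple transitivity. Second, the final step is slightly misattributed: the statement that a group generated by involutions satisfying the exchange (equivalently, deletion) condition is a Coxeter group on those generators is the classical characterisation in \cite{Hu-90} and \cite{Bo-68}; Matsumoto's theorem \cite{Mat-64} on reduced words is a consequence of the Coxeter presentation rather than the tool that produces it. Neither point is a genuine gap, and your argument is a faithful reconstruction of the standard proof that the paper implicitly defers to its reference.
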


\section*{Acknowledgments}
We would like to thank Gérard Besson, Thomas Gobet, Jérémie Guilhot, Manuel Kauers and Cédric Lecouvey for interesting discussions. EH is supported by the project Einstein-PPF (\href{https://anr.fr/Project-ANR-23-CE40-0010}{ANR-23-CE40-0010}), funded by the French National Research Agency. KR is supported by the project RAWABRANCH (\href{https://anr.fr/Project-ANR-23-CE40-0008}{ANR-23-CE40-0008}), funded by the French National Research Agency. KR thanks
the VIASM (Hanoï, Vietnam)\ for their hospitality and wonderful working conditions.

\bibliographystyle{siam}

\end{document}